\def\biblio{\bibliography{duality}\bibliographystyle{alpha}}
\definecolor{dark-red}{rgb}{0.5,0.15,0.15}
\definecolor{dark-blue}{rgb}{0.15,0.15,0.6}
\definecolor{dark-green}{rgb}{0.15,0.6,0.15}
\renewcommand*{\backref}[1]{}
\renewcommand*{\backrefalt}[4]{%
  \ifcase #1 %
No citations.
  \or
(cit. on p. #2).%
  \else
(cit on pp. #2).%
  \fi%
}
\newtheorem{thm}{Theorem}[section]
\newtheorem{cor}[thm]{Corollary}
\newtheorem{prop}[thm]{Proposition}
\newtheorem{lem}[thm]{Lemma}
\theoremstyle{definition}
\newtheorem{defn}[thm]{Definition}
\newtheorem{ex}[thm]{Example}
\theoremstyle{remark}
\newtheorem{rem}[thm]{Remark}
\theoremstyle{theorem}
\newtheorem*{thm*}{Theorem}
\newtheorem*{cor*}{Corollary}
\newtheorem*{prop*}{Proposition}
\newtheorem*{defn*}{Definition}
\let\c@equation\c@thm
\numberwithin{equation}{section}
\DeclareMathOperator{\Sp}{Sp}
\DeclareMathOperator{\Hom}{Hom}
\DeclareMathOperator{\RHom}{RHom}
\DeclareMathOperator{\End}{End}
\DeclareMathOperator{\cE}{\mathcal{E}}
\DeclareMathOperator{\cF}{\mathcal{F}}
\DeclareMathOperator{\cG}{\mathcal{G}}
\DeclareMathOperator{\Ext}{Ext}
\DeclareMathOperator{\Spec}{Spec}
\DeclareMathOperator{\Mod}{Mod}
\DeclareMathOperator{\StMod}{StMod}
\DeclareMathOperator{\Loc}{Loc}
\newcommand{\Q}{\mathbb{Q}}
\newcommand{\kos}[2]{{#1}/\!\!/{#2}}
\newcommand{\cal}{\mathcal}
\newcommand{\xr}{\xrightarrow}
\newcommand{\Z}{\mathbb{Z}}
\Crefname{figure}{Figure}{Figures}
\Crefname{assu}{Assumption}{Assumptions}
\Crefname{lem}{Lemma}{Lemmas}
\Crefname{thm}{Theorem}{Theorems}
\renewcommand{\frak}{\mathfrak}
\DeclareMathOperator{\Cell}{Cell}
\DeclareMathOperator{\Inj}{Inj}
\newcommand{\fp}{\mathfrak{p}}
\newcommand{\recollement}[5]{
\xymatrix{{#1} \ar[r]|-{#2} & #3 \ar[r]|-{#4} \ar@<1ex>[l]^-{{#2}_!} \ar@<-1ex>[l]_-{{#2}^*} & #5, \ar@<1ex>[l]^-{{#4}!} \ar@<-1ex>[l]_-{{#4}^*}
}}
\let\lim\relax
\DeclareMathOperator{\lim}{lim}
\newcommand{\F}{\mathbb{F}}
\newcommand{\cL}{\mathcal{L}}
\DeclareMathOperator{\Map}{Map}
\title{Local Gorenstein duality for cochains on spaces}
\author{Tobias Barthel}
\address{Max-Planck-Institut f\"ur Mathematik, Vivatsgasse 7,
53111 Bonn,
Germany}
\email{tbarthel@mpim-bonn.mpg.de}
\author{Nat{\`a}lia Castellana}
\address{Departament de Matem\`atiques, Universitat Aut\`onoma de Barcelona, 08193 Bellaterra, Spain, and BGSMATH}
\email{natalia@mat.uab.cat}
\author{Drew Heard}
\address{Department of Mathematical Sciences, Norwegian University of Science and Technology, Trondheim}
\email{drew.k.heard@ntnu.no}
\author{Gabriel Valenzuela}
\address{Max-Planck-Institut f\"ur Mathematik, Vivatsgasse 7,
53111 Bonn,
Germany}
\email{gvalenzuela@mpim-bonn.mpg.de}
\date{\today}
\begin{document}

\begin{abstract}
We investigate when a commutative ring spectrum $R$ satisfies a homotopical version of local Gorenstein duality, extending the notion previously studied by Greenlees. In order to do this, we prove an ascent theorem for local Gorenstein duality along morphisms of $k$-algebras.  Our main examples are of the form $R = C^*(X;k)$, the ring spectrum of cochains on a space $X$ for a field $k$. In particular, we establish local Gorenstein duality in characteristic $p$ for $p$-compact groups and $p$-local finite groups as well as for $k = \Q$ and $X$ a simply connected space which is Gorenstein in the sense of Dwyer, Greenlees, and Iyengar.
\end{abstract}
\keywords{Gorenstein duality, local cohomology, structured ring spectra, $p$-compact groups, $p$-local finite groups}
\subjclass[2010]{Primary: 55U30, 55R35. Secondary: 13H10,13D45}

\maketitle

\setcounter{tocdepth}{1}
\tableofcontents
\def\biblio{}
\section{Introduction}
Given a Noetherian commutative local ring $(A,\frak{m},k)$, there are numerous equivalent conditions for when $A$ is Gorenstein. In particular, if $A$ has Krull dimension $n$, then $A$ is Gorenstein if and only if 
\[
\Ext_A^i(k,A) \cong \begin{cases}
	k & i = n \\
	0 & \text{otherwise.}
\end{cases}
\]
In the derived category $D(A)$, this can be restated in terms of the derived hom as an equivalence $\RHom_A(k,A) \simeq \Sigma^nk$. Inspired by this, Dwyer, Greenlees, and Iyengar \cite{dgi_duality} introduced the notion of a Gorenstein ring spectrum. More specially, if $k$ is a field and $R$ a commutative ring spectrum,  a morphism $R \to k$ of ring spectra\footnote{If $k$ is a field, we will also denote by $k$ the Eilenberg--MacLane spectrum $Hk$.} (always assumed to be commutative) is said to be Gorenstein of shift $r$ if there is an equivalence $\Hom_R(k,R) \simeq \Sigma^r k$ for some integer $r$. 
 
 One is particularly interested in the duality that the Gorenstein condition implies. Assume that $R$ is a $k$-algebra. If $R \to k$ is Gorenstein then, under some additional orientable hypothesis and coconnectiveness, $R$ automatically satisfies the property that $\Cell_k(R) \simeq \Sigma^r\Hom_k(R,k)$, where $\Cell_k$ is the $k$-cellular approximation to $R$, see \Cref{sec:gorduality}. As we shall see, this is the analog of the classical characterization of Gorenstein rings as those commutative local Noetherian rings $A$ of Krull dimension $n$ for which the local cohomology with respect to $\frak m$ satisfies 
 \[
H^i_{\frak m}(A) \cong \begin{cases}
	I_{\frak m} & i = n \\
	0 & \text{otherwise,}
\end{cases}
 \]
where $I_{\frak m} \cong \Hom_k(A,k)$ denotes the injective hull of $k$. Whenever the equivalence $\Cell_k(R) \simeq \Sigma^r\Hom_k(R,k)$ is satisfied for a morphism $R \to k$ of ring spectra, we say that $R$ satisfies Gorenstein duality. Note however that, in contrast to the algebraic situation, $R \to k$ being Gorenstein does not imply that $R$ satisfies Gorenstein duality, see \Cref{rem:gorenstein_not_duality}. The structural implications for $\pi_*R$ when $R$ satisfies Gorenstein duality have been investigated previously by Greenlees and Lyubeznik \cite{green_lyub}. 

Our examples will mostly come from ring spectra of the form $R = C^*(X;k)$, the ring spectrum of $k$-valued cochains on a suitable space $X$. If $X = BG$ where $G$ is a finite group, then $C^*(BG;\F_p) \to \F_p$ is always Gorenstein of shift $0$, and $C^*(BG;\F_p)$ satisfies Gorenstein duality of the same shift, even though the cohomology ring $\pi_{-*}C^*(BG;\F_p) \cong H^*(BG;\F_p)$ is not Gorenstein in general \cite[Section 10.3]{dgi_duality}. A consequence of the structural implications mentioned earlier is that $H^*(BG;\F_p)$ is Cohen--Macaulay if and only if $H^*(BG;\F_p)$ is Gorenstein, a result originally shown by Benson and Carlson \cite{BensonCarlson1994Projective}. 
 
 If $A$ is a commutative local Gorenstein ring, then the localization $A_{\frak p}$ at any prime ideal $\fp \in \Spec(A)$ is still Gorenstein. One way to see this is to use yet another interpretation of Gorenstein rings as those commutative local rings with finite injective dimension as an $A$-module. Alternatively, we observe that if $\frak p$ has dimension $d$, then the ring $A_{\frak p}$ is local Noetherian of dimension $n-d$, and Greenlees--Lyubeznik's dual localization \cite[Section 2]{green_lyub} can be used to show that 
 \begin{equation}\label{eq:local_gorenstein}
 H_{\frak p}^{*}(A_{\frak p}) \cong I_{\frak p}[n-d] 
 \end{equation}
 where $I_{\frak p}$ is the injective hull of $A/\frak{p}$,\footnote{ We note that by \cite[Proposition 3.77]{lambook}, $I_{\frak p}$ has a natural structure as an $A_{\frak p}$-module, and is isomorphic to the injective hull of $A_{\frak p}/\frak p$.} and hence $A_{\frak p}$ is still Gorenstein. 
 
 Now suppose that $R$ is a ring spectrum. As we will explain below, for any $\frak p \in \Spec^h(\pi_*R)$, we can form spectral versions of the terms in the equation  (\ref{eq:local_gorenstein}); that is, $R_{\frak p}$ for localization at $\frak p$, $\Gamma_\frak p R$ for local cohomology of $R_{\frak p}$ and $T_R(I_{\frak p})$ for injective hulls, where $I_{\frak p}$ is the injective hull of $(\pi_*R)/\frak p$ and $\pi_*(T_R(I_{\frak p}))\cong I_{\frak p}$. We thus say that $R \to k$ satisfies local Gorenstein duality of shift $a$ if the spectral version of \eqref{eq:local_gorenstein} holds for all $\frak p$, 
 \[
 \Gamma_{\frak p}R \simeq \Sigma^{a+d}T_R(I_{\frak p}).
 \] 
An outcome of this equivalence is the fact that the homotopy of the spectrum $\Gamma_{\frak p}R$ is determined by algebraic information in $\pi_*(R)$, see \Cref{thm:green_lyub,rem:rickard} This spectral version was introduced by Barthel, Heard and Valenzuela in \cite[Definition 4.21]{bhv2} under the terminology of absolute Gorenstein duality. For a formal definition of local Gorenstein duality, see \Cref{sec:localgorenstein}. The consequences of local Gorenstein duality for the ring $\pi_*R$ are reviewed in \Cref{thm:green_lyub} below. For example, it implies that the ring $\pi_*R$ is generically Gorenstein, i.e., the localization of $\pi_*R$ at any minimal prime ideal is Gorenstein.  In fact, the main result of \cite{bg_localduality} is that $C^*(BG;\F_p) \to \F_p$ satisfies local Gorenstein duality when $G$ is a finite group, or more generally for certain compact Lie groups with an orientability property. In this context the modules $\Gamma_{\frak p}(R)$ are understood as generalizations of the idempotent Rickard modules in the stable module category $\StMod_G$ when $G$ is a finite group (see \cite[Theorem 2]{{benson_shortproof}}), which were first used to classify the thick subcategories of the compact objects of $\StMod_G$ \cite{BensonCarlsonRickard1997Thick}, see \Cref{rem:rickard}.  
 
A fundamental difference between the algebraic and topological situations is that in topology we do not know in general that Gorenstein duality implies local Gorenstein duality. The first objective for this work is to identify conditions where local Gorenstein duality holds.
The main techniques to determine whether a ring spectrum $R \to k$ is Gorenstein are the Gorenstein ascent and descent theorems of Dwyer--Greenlees--Iyengar, see \cite[Section 19]{greenlees_hi} for a summary, and  \cite{bhv2} for ascent techniques in local Gorenstein duality. 

Inspired by the Gorenstein ascent of Dwyer--Greenlees--Iyengar, we prove the following ascent theorem for local Gorenstein duality. 
\begin{thm*}[\Cref{cor:localdualityfinite}]
	Let $S \xr{f} R$ be a finite morphism of augmented $k$-algebras and $Q = R \otimes_S k$. Assume that the following conditions are satisfied:
	\begin{enumerate}
	 	\item $R \to k$ and $S \to k$ are orientable Gorenstein (\Cref{def:orientable}) of shift $r$ and $s$ respectively.
	 \item $Q \to k$ is cosmall, i.e., $Q$ is in the thick subcategory in $\Mod_Q$ generated by $k$. 
	 \item $R$ and $S$ are dc-complete (\Cref{sec:gorduality}).
	 \end{enumerate}  
	 Then, if $S$ satisfies local Gorenstein duality of shift $s$, then $R$ satisfies local Gorenstein duality of shift $r$. 
\end{thm*}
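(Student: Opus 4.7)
The plan is to promote the Dwyer--Greenlees--Iyengar Gorenstein ascent theorem to the local setting by verifying local Gorenstein duality on $R$ one prime at a time, via a base-change argument along the finite morphism $f\colon S \to R$. Fix a homogeneous prime $\fp \in \Spec^h(\pi_*R)$ of dimension $d$, and set $\fq = f^{-1}(\fp) \in \Spec^h(\pi_*S)$. Since $f$ is finite, so is the induced ring map $\pi_*S \to \pi_*R$; hence $\fp$ is a minimal prime above $\fq\pi_*R$ and $\dim \fq = d$ as well. Local Gorenstein duality for $S$ at $\fq$ provides
\[
\Gamma_\fq S \simeq \Sigma^{s+d}\, T_S(I_\fq),
\]
and the target equivalence for $R$ is $\Gamma_\fp R \simeq \Sigma^{r+d}\, T_R(I_\fp)$.

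The first step is to tensor the equivalence above with $R$ over $S$. On the left-hand side, local cohomology is compatible with restriction along finite morphisms, so one expects a canonical identification
\[
R \otimes_S \Gamma_\fq S \simeq \bigoplus_{\fp' \mid \fq} \Gamma_{\fp'} R,
\]
where the sum runs over homogeneous primes of $\pi_*R$ contracting to $\fq$; the summands are orthogonal because they are supported at distinct primes, so the $\fp$-component can be isolated. On the right-hand side, the orientability of both $R \to k$ and $S \to k$, combined with dc-completeness, should allow the Matlis-type duality functor $T_{(-)}$ to be transported along $f$, yielding a parallel decomposition $R\otimes_S T_S(I_\fq) \simeq \bigoplus_{\fp' \mid \fq} \Sigma^{c}\, T_R(I_{\fp'})$ for a shift $c$ determined by the relative Gorenstein data of $S \to R$.

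To pin down $c$ I would invoke the non-local Gorenstein ascent of Dwyer--Greenlees--Iyengar, which under the orientability and cosmallness hypotheses fixes $r$ in terms of $s$ and the shift of $Q = R\otimes_S k$ as a cosmall object; exactly this shift governs the base-change correction between $T_S(I_\fq)$ and $T_R(I_\fp)$. Combining the two contributions and isolating the summand at $\fp$ therefore produces the overall shift $r + d$ on the right-hand side, giving $\Gamma_\fp R \simeq \Sigma^{r+d}\, T_R(I_\fp)$. Since $\fp$ was arbitrary, $R$ satisfies local Gorenstein duality of shift $r$.

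The principal obstacle is making the Matlis-duality side of the base change rigorous: showing that $R \otimes_S T_S(I_\fq)$ decomposes as claimed requires controlling the injective-hull functor of Gorenstein duality across the finite extension $S \to R$ at the spectrum level, not merely on homotopy groups. This is precisely where the three hypotheses do their work: cosmallness of $Q$ allows one to move between $k$-linear and $Q$-linear dualities by thick-subcategory arguments, orientability aligns the orientation classes of $R$ and $S$ so that $T_R$ and $T_S$ are comparable, and dc-completeness ensures that the duality functors commute with the limits built into the definition of $T_{(-)}$. Everything else amounts to assembling standard compatibilities of local cohomology and restriction along finite maps, already present in the formalism of local duality developed earlier in the paper.
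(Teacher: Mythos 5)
Your overall strategy---check the duality prime by prime on $\pi_*R$ by base-changing the duality for $S$ at the contracted prime $\fq$---is in the right spirit, but it skips the step that carries all the content, and the base change itself is done with the wrong adjoint. The paper's argument is a two-step reduction: first it proves (\Cref{thm:localdualityfinite}) that hypotheses (1)--(3) force $f$ to be \emph{relatively Gorenstein} of shift $s-r$, i.e.\ $\Hom_S(R,S)\simeq\Sigma^{s-r}R$; this is exactly where orientability, cosmallness of $Q$ and dc-completeness are used, via the completion formula $\Hom_R(\Cell_k(R),\cal{I}_R)\simeq\Sigma^{-a}\widehat R$ (\Cref{prop:completion}), the comparison of $k$-cellularizations over $S$ and over $R$ (\Cref{lem:basechange}), and the coinduction identity $\Hom_S(R,\cal{I}_S)\simeq\cal{I}_R$ (\Cref{lem:coind}). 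It then quotes the ascent theorem \Cref{prop:bhv_ascent} (\cite[Theorem 4.27]{bhv2}): a finite, relatively Gorenstein morphism of shift $r-s$ transports local Gorenstein duality of shift $s$ on $S$ to shift $r$ on $R$. Your proposal never establishes the relative Gorenstein equivalence; the place where you say the shift $c$ is ``determined by the relative Gorenstein data'' and can be pinned down by invoking Dwyer--Greenlees--Iyengar ascent does not work, because \Cref{thm:gorenstein_ascent} only relates the absolute Gorenstein shifts of $S$, $Q$ and $R$ (which are already hypotheses here); it does not produce the equivalence $\Hom_S(R,S)\simeq\Sigma^{s-r}R$ that is actually needed to compare the duality functors over $S$ and over $R$.

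Concretely, the step that fails is the claimed decomposition $R\otimes_S T_S(I_\fq)\simeq\bigoplus_{\fp'\mid\fq}\Sigma^{c}T_R(I_{\fp'})$. Induction is the wrong functor here: $\pi_*(R\otimes_S T_S(I_\fq))$ is built from $\pi_*R\otimes_{\pi_*S}I_\fq$ and Tor terms, and this is not an injective $\pi_*R$-module in general, so there is no reason for the induced module to be a sum of shifted $T_R(I_{\fp'})$'s. What does behave well is coinduction: $\Hom_S(R,T_S(I_\fq))\simeq T_R\bigl(\Hom_{\pi_*S}(\pi_*R,I_\fq)\bigr)$, since $\Hom_{\pi_*S}(\pi_*R,I_\fq)$ is injective over $\pi_*R$ (and even then its decomposition into indecomposable injectives $I_{\fp'}$ can occur with multiplicities, so isolating a single copy of $T_R(I_\fp)$ with the correct shift requires care---this bookkeeping is exactly what \cite[Theorem 4.27]{bhv2} handles). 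Converting the coinduced object back into the induced one is precisely what the relative Gorenstein equivalence buys you, which brings you back to the missing \Cref{thm:localdualityfinite}. A similar caveat applies on the local cohomology side: base change gives $\Gamma_{\cal{V}(\fq\pi_*R)}$ applied to $R$ localized away from $\fq$, and identifying this with a sum of the $\Gamma_{\fp'}R$ needs the incomparability properties of the finite extension $\pi_*S\to\pi_*R$, not merely the remark that the $\fp'$ are ``distinct primes.'' So as written the argument has a genuine gap; the shortest repair is to prove the relative Gorenstein property from (1)--(3) as in \Cref{thm:localdualityfinite} and then apply \Cref{prop:bhv_ascent}, rather than redoing the prime-by-prime analysis by hand.
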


Our main examples come from ring spectra of the form $C^*(X;k)$ with emphasis on classifying spaces of compact Lie groups and its homotopical generalizations. Here the technical conditions of orientability and dc-completeness are satisfied automatically when $X$ is a suitably nice space. In particular, they are satisfied for spaces of Eilenberg--Moore type (EM-type), see \Cref{def:emtype}. The previous proposition specializes to the following statement. 
\begin{thm*}[\Cref{thm:relgorensteinspaces}]
		Let $g \colon Y \to X$ be a morphism of spaces of EM-type ($p$-complete if the characteristic of $k$ is $p$) with fiber $F$.  Suppose that $H^*(F;k)$ is finite-dimensional, and that $C^*(X;k)$ is Gorenstein of shift $s$.
\begin{enumerate}
  \item If $C^*(Y;k)$ is Gorenstein of shift $r$, then $g^* \colon C^*(X;k) \to C^*(Y;k)$ is relatively Gorenstein of shift $s-r$.
  \item If, in addition, $C^*(X;k)$ satisfies local Gorenstein duality of shift $s$, then $C^*(Y;k)$ satisfies local Gorenstein duality of shift $r$.
\end{enumerate}
\end{thm*}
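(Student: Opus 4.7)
The plan is to reduce both parts of the theorem to the ascent theorem \Cref{cor:localdualityfinite} applied to $S = C^*(X;k)$ and $R = C^*(Y;k)$, with augmentations over $k$ induced by basepoints and with $f = g^* \colon S \to R$. The geometric input is the Eilenberg--Moore equivalence $R \otimes_S k \simeq C^*(F;k)$, which is available precisely because $g$ is a morphism of spaces of EM-type (in the $p$-complete setting when $\chars k = p$); write $Q$ for this common ring spectrum. This identification is what translates the topological conditions on the fiber into the algebraic conditions on $Q$ needed by the ascent theorem.

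Next I would verify the four hypotheses of \Cref{cor:localdualityfinite} in turn. Orientability and dc-completeness of $C^*(X;k) \to k$ and $C^*(Y;k) \to k$ are built into the EM-type assumption recorded in \Cref{def:emtype} and the surrounding discussion, so these hypotheses are automatic. Cosmallness of $Q \to k$ is immediate from the finite-dimensionality of $H^*(F;k)$: the ring spectrum $Q \simeq C^*(F;k)$ has only finitely many nonzero, finite-dimensional homotopy groups, hence is built from $k$ by finitely many cofiber sequences and so lies in the thick subcategory generated by $k$ in $\Mod_Q$. Finiteness of $f$ then follows by descending this cellularity: since $Q = R \otimes_S k$ is built from $k$ in $\Mod_Q$, and $R \otimes_S -$ preserves such finite cell decompositions, the Eilenberg--Moore comparison shows that $R$ is built from $S$ in finitely many steps as an $S$-module.

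For part (1), I would invoke the Gorenstein ascent/descent formalism of Dwyer--Greenlees--Iyengar. A standard change-of-rings argument produces the chain of equivalences
\[
\Hom_R(k, \Hom_S(R,S)) \simeq \Hom_S(k, S) \simeq \Sigma^{s} k \simeq \Sigma^{s-r}\Hom_R(k, R) \simeq \Hom_R(k, \Sigma^{s-r} R),
\]
and the orientability supplied by the EM-type hypothesis upgrades this to an equivalence $\Hom_S(R,S) \simeq \Sigma^{s-r} R$ of $R$-modules, which is exactly the statement that $g^*$ is relatively Gorenstein of shift $s-r$. Part (2) is then immediate from \Cref{cor:localdualityfinite}, since all of its hypotheses are now verified and $S$ is assumed to satisfy local Gorenstein duality of shift $s$.

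The main obstacle I anticipate is not homotopical but bookkeeping: aligning the EM-type hypothesis with the precise technical conditions (orientability, dc-completeness, and the exact formulation of finiteness) that appear in \Cref{cor:localdualityfinite}, and keeping the Gorenstein shifts straight across the change-of-rings formula above. The genuinely new homotopical content has already been absorbed into the ascent theorem itself, so the present result is essentially a dictionary between fibrations of EM-type with finite-dimensional fiber cohomology and finite orientable morphisms of dc-complete $k$-algebras with cosmall quotient.
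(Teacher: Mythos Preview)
Your overall strategy---identify $Q \simeq C^*(F;k)$ via Eilenberg--Moore, verify the hypotheses of \Cref{cor:localdualityfinite} (and \Cref{thm:localdualityfinite} for part (1)), and apply them---is exactly what the paper does. The verification of orientability (\Cref{lem:orientable}), dc-completeness, and cosmallness of $Q \to k$ is also handled the same way.

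There is, however, a genuine gap in your argument for finiteness of $f = g^*$. You write that ``since $Q = R \otimes_S k$ is built from $k$ in $\Mod_Q$, and $R \otimes_S -$ preserves such finite cell decompositions, the Eilenberg--Moore comparison shows that $R$ is built from $S$ in finitely many steps as an $S$-module.'' This reasoning goes the wrong direction: the functor $R \otimes_S (-)$ takes a finite $S$-cell structure on $R$ to a finite $k$-cell structure on $Q$, not conversely. Knowing that $Q$ is perfect over $k$ does not by itself force $R$ to be perfect over $S$ (for instance, take $S = k[x]$, $R = k[x^{\pm 1}]$: then $R \otimes_S k \simeq 0$ is perfect while $R$ is not compact over $S$). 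The paper instead invokes \Cref{lem:spacesfinit}, a nontrivial result of Greenlees--Hess--Shamir (rationally) and Benson--Greenlees--Shamir (in characteristic $p$); note that the characteristic-$p$ case genuinely uses the $p$-completeness of $X$ and $Y$, which is otherwise idle in your sketch.

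A smaller point: your argument for part (1) passes from an equivalence after $\Hom_R(k,-)$ to an equivalence of $R$-modules by appealing to ``orientability,'' but orientability only matches two $\cal{E}$-module structures on $\Sigma^a k$; what is actually needed to recover $\Hom_S(R,S) \simeq \Sigma^{s-r}R$ is dc-completeness together with the cellularization argument carried out in \Cref{thm:localdualityfinite}. The paper simply cites that proposition rather than reproving it here.
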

Let $G$ be a compact Lie group such that (if $p >2$) the adjoint representation of $G$ is orientable, and consider a unitary embedding $f \colon G \to U(n)$. Taking $g = Bf^{\wedge}_p$ in the above theorem, one recovers the result of Benson and Greenlees that $C^*(BG;\F_p) \to \F_p$ satisfies local Gorenstein duality of shift $\dim(G)$. The same result (with no orientability hypothesis) holds for the classifying space of a $p$-compact group $\cal{G}$, that is, $C^*(B\cal{G},\F_p) \to \F_p$ satisfies local Gorenstein duality of shift $\dim_p(\cal{G})$, where $\dim_p(X)$ denotes the $\F_p$-cohomological dimension of a space $X$.

It is not true in general that if $G$ is a compact Lie group, then $C^*(BG;\F_p)$ satisfies local Gorenstein duality of shift $\dim(G)$. A simple example described in \cite{greenlees_borel} is given by $G = O(2)$ at an odd prime, which satisfies local Gorenstein duality of shift 3, while $\dim(G) = 1$. However, the triple $(\Omega (BO(2)^{\wedge}_p)),BO(2)^{\wedge}_p,\text{id})$ is a $p$-compact group, even though $\Omega (BO(2)^{\wedge}_p) \not \simeq O(2)^{\wedge}_p$. In fact $\Omega (BO(2)^{\wedge}_p)\simeq \Omega (B(S^3)^{\wedge}_p)\simeq (S^3)^{\wedge}_p$ at odd primes, which explains the shift obtained as the $\F_p$-cohomological dimension. More generally, we say that a compact Lie group is of $p$-compact type if the triple $(\Omega (BG^{\wedge}_p),BG^\wedge_p,\text{id})$ is a $p$-compact group (this is true if and only if $\Omega (BG^{\wedge}_p)$ is $\F_p$-finite).  In this case, $C^*(BG;\F_p) \to \F_p$ will satisfy Gorenstein duality of shift $\dim_p (\Omega (BG^{\wedge}_p))$ by the previous result on $p$-compact groups. Taking $G = O(2)$ at an odd prime as above, we have $\dim_p (\Omega (BO(2)^{\wedge}_p)) = 3$, as expected. 

A common generalization of both $p$-compact groups and compact Lie groups is the notion of a $p$-local compact group of Broto, Levi, and Oliver \cite{blo_pcompact}. Given such a $p$-local compact group $\cG$, there is an associated classifying space $B\cal{G}$, and one can ask if $C^*(B\cal{G};\F_p) \to \F_p$ satisfies local Gorenstein duality, or even if it is Gorenstein. We do not know the answer to this question in full generality, however we identify conditions for this to occur in \Cref{sec:pcompact}. In the case of $p$-local finite groups \cite{blo_fusion} we deduce from work of Cantarero, Castellana, and Morales \cite{ccm_vbplocalfinite}, that $C^*(B\cal{G};\F_p) \to \F_p$ satisfies local Gorenstein duality of shift $0$. In summary, we obtain the following results. 

\begin{thm*}[\Cref{thm:localgorensteindualitypcompact,thm:liepcompact,cor:plocalfinitegorenstein}]
	Let $\cal{G}$ be a $p$-local compact group of one of the following types:
	\begin{enumerate}
		\item associated to a Lie group of $p$-compact type,
		\item $\Omega(B\cal{G}^\wedge_p)$ is a $p$-compact group, or
		\item a $p$-local finite group. 
	\end{enumerate}  Then, $C^*(B\cal{G},\F_p) \to \F_p$ satisfies local Gorenstein duality of shift $\dim_p(\Omega(B\cG^{\wedge}_p))$ (Cases (1) and (2)) or $0$ (Case (3)), respectively.
\end{thm*}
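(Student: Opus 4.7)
The strategy is to reduce all three cases to the Eilenberg--Moore ascent theorem \Cref{thm:relgorensteinspaces}. In each case I will produce a map $g\colon B\cal{G}^{\wedge}_p \to X$ of spaces of EM-type, where $X$ is a space for which $C^*(X;\F_p) \to \F_p$ is already known to satisfy local Gorenstein duality of known shift, and such that the homotopy fiber of $g$ has finite-dimensional $\F_p$-cohomology. Applying part (2) of that theorem then propagates local Gorenstein duality from $C^*(X;\F_p)$ to $C^*(B\cal{G};\F_p)$, with shift equal to the Gorenstein shift of $C^*(B\cal{G};\F_p)$ itself, which I will establish independently using results already known for $p$-compact groups and $p$-local finite groups.

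Case (1) is essentially tautological: by definition, a compact Lie group $G$ is of $p$-compact type precisely when $(\Omega(BG^{\wedge}_p), BG^{\wedge}_p, \id)$ is a $p$-compact group, so the claim reduces to case (2). For case (2), write $\cal{G}' = \Omega(B\cal{G}^{\wedge}_p)$, which is a $p$-compact group by hypothesis. A unitary embedding $\cal{G}' \to U(n)^{\wedge}_p$ yields a map $g\colon B\cal{G}^{\wedge}_p \to BU(n)^{\wedge}_p$ with $\F_p$-finite homotopy fiber $U(n)^{\wedge}_p/\cal{G}'$. Since $C^*(BU(n);\F_p)$ satisfies local Gorenstein duality of shift $\dim U(n)$ (the compact Lie group case, established earlier in the paper via the Benson--Greenlees argument) and $C^*(B\cal{G};\F_p)$ is Gorenstein of shift $\dim_p(\cal{G}')$ (the Gorenstein theorem for $p$-compact groups of Dwyer--Wilkerson), applying \Cref{thm:relgorensteinspaces}(2) yields local Gorenstein duality for $C^*(B\cal{G};\F_p)$ of shift $\dim_p(\cal{G}')$. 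Case (3) proceeds identically, using the faithful unitary representation $B\cal{G} \to BU(n)^{\wedge}_p$ of a $p$-local finite group constructed by Cantarero--Castellana--Morales \cite{ccm_vbplocalfinite}; since $\Omega B\cal{G}$ is $\F_p$-finite in this setting, the Gorenstein shift works out to $0$.

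The main obstacle is verifying the technical hypotheses of \Cref{thm:relgorensteinspaces} in each setting: EM-type of the relevant spaces, orientability of the Gorenstein structure on $C^*(B\cal{G};\F_p) \to \F_p$, and dc-completeness of the ring spectra involved. For $p$-compact groups, EM-type and dc-completeness are built into the homotopy-theoretic framework developed in earlier sections, while orientability follows from working mod $p$. For $p$-local finite groups, orientability is automatic, EM-type follows from the Noetherianity of $H^*(B\cal{G};\F_p)$ due to Broto--Levi--Oliver together with $\F_p$-finiteness of $\Omega B\cal{G}$, and dc-completeness reduces via the unitary embedding to the compact Lie group case.
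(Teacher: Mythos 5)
Your overall strategy---reduce everything to \Cref{thm:relgorensteinspaces}(2) via a unitary embedding into $BU(n)^{\wedge}_p$, whose cochains satisfy local Gorenstein duality, and feed in the known Gorenstein shift of $C^*(B\cal{G};\F_p)$---is exactly the paper's argument for cases (1) and (2), and those two cases are handled correctly (modulo citation slips: the Gorenstein property of $C^*(BX;\F_p)$ for a $p$-compact group is due to Dwyer--Greenlees--Iyengar, not Dwyer--Wilkerson, and the paper's base case for $BU(n)$ comes from its polynomial cohomology being algebraically Gorenstein rather than from the Benson--Greenlees theorem; either source is legitimate).

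Case (3), however, contains a genuine error. You assert that $\Omega B\cal{G}$ is $\F_p$-finite for a $p$-local finite group $\cal{G}$ and use this both to get EM-type and to conclude the shift is $0$. This is false in general: it would say that every $p$-local finite group is a $p$-compact group, and the paper itself gives the counterexample $\Sigma_3$ at $p=3$, which is not of $3$-compact type. Consequently your derivation of the shift $0$ does not go through; the correct input is the theorem of Cantarero--Castellana--Morales (\cite{ccm_vbplocalfinite}, Theorem 6.7) that $C^*(B\cal{G};\F_p)\to\F_p$ is Gorenstein of shift $0$, which is what the paper uses in \Cref{cor:plocalfinitegorenstein}. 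Relatedly, ``proceeds identically'' to case (2) hides a real step: for a $p$-compact group the $\F_p$-finiteness of the fiber of the embedding is part of the definition of a monomorphism, whereas for a $p$-local finite group the map of \cite{ccm_vbplocalfinite} is only a homotopy monomorphism, and one must argue (as in \Cref{prop:homotopymono} together with \Cref{lem:finitesmall}, or \Cref{prop:relgorensteinspaces}) that $H^*(B\cal{G};\F_p)$ is finite over $H^*(BU(n);\F_p)$, so that the cochain map is finite and the fiber has finite-dimensional cohomology. EM-type of $B\cal{G}$ needs none of your $\Omega B\cal{G}$ claim: it follows from $p$-completeness, $\pi_1B\cal{G}$ being a finite $p$-group, and Noetherianity of the cohomology; likewise dc-completeness is automatic for spaces of EM-type and does not need a reduction to the Lie group case.
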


For example, this shows that $O(2n)$ at an odd prime satisfies local Gorenstein duality of shift $n(2n+1)$, while $\dim(O(2n)) = n(2n-1)$. 

In the rational case, a stronger result holds. Using the fact that algebraic Noether normalization can be lifted to the spectrum level, we show that for any simple connected rational space with Noetherian cohomology, Gorenstein implies local Gorenstein duality of the same shift. 
\begin{thm*}[\Cref{thm:rational}]
	Let $X$ be a simply connected rational space with Noetherian cohomology. If $C^*(X;\Q) \to \Q$ is Gorenstein of shift $r$, then $C^*(X;\Q)$ satisfies local Gorenstein duality of shift $r$. 
\end{thm*}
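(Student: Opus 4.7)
The plan is to prove the theorem by lifting graded Noether normalization to the level of ring spectra and then applying \Cref{thm:relgorensteinspaces}. Write $R := C^*(X;\Q)$. Since $H^*(X;\Q)$ is a Noetherian graded-commutative $\Q$-algebra, graded Noether normalization produces a finite inclusion $P := \Q[x_1,\ldots,x_n] \hookrightarrow H^*(X;\Q)$ with the $x_i$ homogeneous of even positive degrees $d_i$. (Odd-degree elements are nilpotent in characteristic zero by graded-commutativity, so the normalization may be arranged to use only even-degree generators.) Each class $x_i$ is represented by a map $X \to K(\Q, d_i)$, and these assemble into a single classifying map $\phi \colon X \to K := \prod_{i=1}^n K(\Q, d_i)$.

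Applying $C^*(-;\Q)$ to $\phi$ yields a morphism $\phi^* \colon S := C^*(K;\Q) \to R$ of augmented $\Q$-algebra spectra which realizes the finite inclusion $P \hookrightarrow H^*(X;\Q)$ on homotopy and is therefore finite. To apply \Cref{thm:relgorensteinspaces}(2) I verify its hypotheses: both $X$ and $K$ are simply connected with Noetherian rational cohomology, hence of EM-type; the homotopy fiber $F$ of $\phi$ has $\dim_\Q H^*(F;\Q) < \infty$ by the Eilenberg--Moore spectral sequence, using that $H^*(X;\Q)$ is finite over $H^*(K;\Q) = P$; and both $S$ and $R$ are Gorenstein of appropriate shifts. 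For $R$ this is the hypothesis, with shift $r$; for $S$ it follows by a direct computation (it is the derived tensor product over $\Q$ of the Gorenstein ring spectra $C^*(K(\Q, d_i);\Q)$, the rational analogs of $C^*(BS^1;\F_p)$) yielding Gorenstein shift $s := \sum_i (d_i - 1)$.

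The outstanding input is local Gorenstein duality for $S$ of shift $s$. By the rational Künneth isomorphism $S \simeq \bigotimes_i C^*(K(\Q, d_i);\Q)$ and the compatibility of local Gorenstein duality with smash products of ring spectra over a field (with shifts summing), this reduces to the single-factor case: $C^*(K(\Q, 2m);\Q)$ satisfies local Gorenstein duality of shift $2m-1$. This is the rational analog of the circle case $C^*(BS^1;\F_p)$ handled earlier, and may be verified by a direct Koszul-dual computation: the cohomology is polynomial on one generator and the Koszul dual is an exterior $\Q$-algebra on one generator, both sides small enough to identify the local cohomology with the appropriate injective hull explicitly. Granting this base case, \Cref{thm:relgorensteinspaces}(2) transfers local Gorenstein duality across $\phi^*$ and yields local Gorenstein duality for $R$ of shift $r$, as claimed. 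The main obstacle is precisely this base case; once it is settled, the rest of the argument is a formal consequence of the spectral lift of Noether normalization and the ascent theorem.
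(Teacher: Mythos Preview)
Your overall strategy matches the paper's: lift graded Noether normalization to a map $X \to K = \prod_i K(\Q,d_i)$, verify the hypotheses of \Cref{thm:relgorensteinspaces}(2), and transfer local Gorenstein duality from $S = C^*(K;\Q)$ to $R = C^*(X;\Q)$.

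The difference is in how you establish local Gorenstein duality for $S$. You reduce via K\"unneth to the single-factor case $C^*(K(\Q,2m);\Q)$ and propose a direct computation, invoking along the way a compatibility of local Gorenstein duality with tensor products over a field. That compatibility is not proved in the paper and is not entirely formal: primes of $\pi_*(A \otimes_k B)$ are not simply products of primes of the factors, so relating $\Gamma_{\frak p}$ on the tensor product to the factorwise functors takes work. You flag the base case as the obstacle, but the K\"unneth step is the more serious unjustified assertion.

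The paper bypasses all of this with a single observation: $\pi_*S \cong \Q[x_1,\ldots,x_n]$ is a graded regular (hence Gorenstein) local ring, so $S$ is algebraically Gorenstein in the sense of \Cref{defn:alg_gorenstein}, and \Cref{prop:4.7bhv2} then gives local Gorenstein duality for $S$ immediately. No factorization, no base case, no tensor compatibility needed. Your argument can be repaired the same way---just drop the K\"unneth reduction and invoke \Cref{prop:4.7bhv2} directly for $S$---after which the rest of your proof goes through exactly as written.
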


\subsection*{Convention}

Throughout this document, all rings and structured ring spectra will be assumed to be commutative, which in the case of commutative ring spectra means carrying an $E_{\infty}$-ring structure.
\subsection*{Acknowledgements}

We would like to thank the CRM for funding a visit of the authors through the Research in Pairs program. The first author was partially supported by the DNRF92 and the European Unions Horizon 2020 research and innovation programme under the Marie Sklodowska-Curie grant agreement No.~751794, the second author was partially supported by FEDER-MEC grant MTM2016-80439-P and acknowledges financial support from the Spanish Ministry of Economy and Competitiveness through the  ``Mar\'ia de Maeztu'' Programme for Units of Excellence in R\&D (MDM-2014-0445), the third author was partially supported by the SFB 1085 ``Higher Invariants", and the first and fourth author would like to thank the Max Planck Institute for Mathematics for its hospitality. The first and second author would furthermore like to thank the Isaac Newton Institute for Mathematical Sciences, Cambridge, for support and hospitality during the programme \emph{Homotopy Harnessing Higher Structures}, where work on this paper was undertaken. This work was supported by EPSRC grant no EP/K032208/1. Finally, we thank the referee for their helpful comments.

\section{Gorenstein ring spectra}
In this section we first review the notions of Gorenstein ring spectrum, relative Gorenstein morphism, and Gorenstein duality, as studied by Dwyer, Greenlees, and Iyengar \cite{dgi_duality}. Then, we prove a result for recognizing when certain morphisms of ring spectra are relative Gorenstein, and use this in the next section to prove an ascent theorem for local Gorenstein duality.

\subsection{The Gorenstein condition}
Suppose $A$ is a (discrete) Noetherian commutative local ring with residue field $k$, then it is a theorem of Serre that $A$ is regular if and only if $k$ has a resolution of finite length by free $A$-modules. For the associated map of Eilenberg--MacLane spectra $HA \to k$, this implies that $k$ is in the thick subcategory of $\Mod_{HA}$ generated by ${HA}$ itself. This leads more generally to the definition of a regular morphism of ring spectra, where we say that a morphism of ring spectra $R \to k$ is regular if $k$ is a compact $R$-module, i.e., $k$ is in the thick subcategory of $\Mod_R$ generated by $R$. However, a weaker notion of regularity is often useful.

\begin{defn}\label{def:proxysmall}
	Let $k$ be a commutative ring spectrum (unless otherwise stated, $k$ is assumed to be a field throughout this paper). A morphism of ring spectra $R \to k$ is called proxy-regular if there exists another $R$-module $K$ called a Koszul complex, such that $K$ is a compact $R$-module, $K$ is in the thick subcategory of $\Mod_R$ generated by $k$, and $k$ is in the localizing subcategory generated by $K$ in $\Mod_R$.  If $K = R$ itself, then we say that $R \to k$ is cosmall. If $K=k$ then we say that $R \to k$ is small.
\end{defn}

Returning to the commutative algebra example, if $A$ is a commutative local Noetherian ring as above, then $HA \to k$ is always proxy-regular, where we can take $K$ to correspond to the usual Koszul complex \cite[Section 5.1]{dgi_duality} associated to a sequence of generators of the maximal ideal of $A$.

We recall that a commutative Noetherian local ring $A$ with residue field $k$ is Gorenstein if and only if $\Ext_A^*(k,A)$ is a one-dimensional $k$-vector space. This leads to the following definition, which is actually a special case of a more general definition due to Dwyer--Greenlees--Iyengar, see \cite[Proposition 8.4]{dgi_duality}.
\begin{defn}
	We say that a morphism $R \to k$ of ring spectra is Gorenstein of shift $a$ if it is proxy-regular and there is an equivalence $\Hom_R(k,R) \simeq \Sigma^ak$ of $k$-modules. More generally, we say that a morphism of ring spectra $S \to R$ is relatively Gorenstein of shift $a$ if $\Hom_S(R,S) \simeq \Sigma^a R$. 
\end{defn}

\subsection{Gorenstein duality}\label{sec:gorduality}
Note that if $R$ admits the structure of a $k$-algebra, and $R \to k$ is Gorenstein, we have an equivalence of $k$-modules
 \begin{equation}\label{eq:orientable}
\Hom_R(k,R) \simeq \Sigma^a k \simeq \Sigma^a\Hom_R(k,\Hom_k(R,k)),
 \end{equation}
 where the last equivalence follows by adjunction.  One important observation of Dwyer--Greenlees--Iyengar is that since $\Hom_R(k,R)$ has an action by $\cE = \Hom_R(k,k)$, if $R \to k$ is Gorenstein, then $\Sigma^a k$ admits the structure of a right $\cE$-module. Likewise, the second equivalence of \eqref{eq:orientable} equips $\Sigma^ak$ with an a priori different right $\cal{E}$-module structure.  

 Restricting to the case of augmented $k$-algebras, this leads to the following definition.\footnote{Without the assumption that $R$ is a $k$-algebra, the definition is more complicated, and involves the notion of Matlis lift of $k$, see \cite[Section 6]{dgi_duality}.}
 
 \begin{defn}\label{def:orientable}
		Let $R$ be an augmented $k$-algebra. If $R$ is Gorenstein, then it is orientable if the two right $\cal{E}$-actions on $\Sigma^ak$ agree, i.e., 
		\[
		\Hom_R(k,R) \simeq \Sigma^a\Hom_R(k,\Hom_k(R,k))
		\] is an equivalence of right $\cal{E}$-modules. 
\end{defn}

One interesting consequence of the Gorenstein condition is the duality that it often implies. To explain this, we introduce some further terminology. If $M$ is an $R$-module, we let $\Cell^R_k(M)$ denote the $k$-cellular approximation of $M$; that is, $\Cell^R_k(M)$ is in the localizing subcategory in $\Mod_R$ generated by $k$, and there is a morphism $\Cell^R_k(M) \to M$ that induces an equivalence on $\Hom_R(k,-)$. If the ring spectrum $R$ is clear, we will usually just write $\Cell_k(M)$. For example, if $A$ is a local Noetherian ring with residue field $k$, then taking $R = HA$ and $M$ a discrete $A$-module, we have that $\pi_*\Cell_k(HM)$ is the local cohomology $H_{\frak m}^*(M)$ of $M$.

If $R \to k$ is proxy-regular, then $k$-cellularization has a particularly simple formula, namely 
\begin{equation}\label{lem:cellproxy}
\Cell_k(M) \simeq \Hom_R(k,M) \otimes_{\cal{E}}k,
\end{equation} where, as previously, $\cal{E} = \Hom_R(k,k)$.
Moreover, $k$-cellular approximation is smashing, that is, $\Cell_k (M) \simeq (\Cell_k(R) )\otimes_R M$	for any $M \in \Mod_R$. Proofs of these claims can be found in \cite[Lemma 6.3]{greenlees_hi} and \cite[Lemma 6.6]{greenlees_hi}. We then have the following, see \cite[Section 18.B]{greenlees_hi}. 

\begin{lem}\label{prop:Gorensteinduality}
		Suppose that $R$ is an augmented $k$-algebra such that $R \to k$ is orientable Gorenstein of shift $a$. Then, there is an equivalence of $R$-modules
		\[
\Cell_k(R) \simeq \Sigma^a \Cell_k(\Hom_k(R,k)).
		\]
		
\end{lem}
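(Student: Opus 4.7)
The plan is to reduce both sides to an explicit description using the formula for $k$-cellular approximation stated in \eqref{lem:cellproxy}, and then identify them via the orientability condition. Since $R \to k$ is Gorenstein, it is in particular proxy-regular, so for any $R$-module $M$ we have
\[
\Cell_k(M) \simeq \Hom_R(k, M) \otimes_{\cE} k, \qquad \cE := \Hom_R(k,k).
\]
Applying this to $M = R$ and $M = \Hom_k(R,k)$ reduces the problem to producing an equivalence of right $\cE$-modules
\[
\Hom_R(k, R) \simeq \Sigma^a \Hom_R(k, \Hom_k(R,k)),
\]
after which tensoring with $k$ over $\cE$ yields the desired $R$-module equivalence.

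The first term is identified with $\Sigma^a k$ directly by the Gorenstein hypothesis. For the second term, I would use the $(\otimes_R, \Hom_k(-,k))$-adjunction, which is available because $R$ is a $k$-algebra, to rewrite
\[
\Hom_R(k, \Hom_k(R,k)) \simeq \Hom_k(k \otimes_R R, k) \simeq \Hom_k(k,k) \simeq k.
\]
This yields two a priori different right $\cE$-module structures on $\Sigma^a k$, one coming from $\Hom_R(k,R)$ and one from $\Sigma^a \Hom_R(k, \Hom_k(R,k))$. These are precisely the two actions compared in the definition of orientability (\Cref{def:orientable}), so the orientability hypothesis directly supplies the required right $\cE$-module equivalence.

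Having established the equivalence of right $\cE$-modules, I would conclude by applying the functor $(-) \otimes_{\cE} k$, which is a functor into $R$-modules (the left $R$-action being inherited through the $R$-module structure on $\Hom_R(k, M)$, which in turn comes from the augmentation making $k$ a two-sided $R$-bimodule). Combining with the two applications of \eqref{lem:cellproxy} gives
\[
\Cell_k(R) \simeq \Hom_R(k,R) \otimes_{\cE} k \simeq \Sigma^a \Hom_R(k, \Hom_k(R,k)) \otimes_{\cE} k \simeq \Sigma^a \Cell_k(\Hom_k(R,k)),
\]
as desired.

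The main obstacle is bookkeeping of the right $\cE$-module structures: without orientability, the identification of $\Hom_R(k,R)$ with $\Sigma^a k$ as a bare $k$-module and the adjunction-based identification of $\Hom_R(k, \Hom_k(R,k))$ with $k$ produce potentially different $\cE$-actions, and the tensor product $(-) \otimes_{\cE} k$ is sensitive to these. The orientability assumption is precisely what is needed to bridge this gap, and the proof is essentially a formal chain of adjunctions once the module structures are tracked correctly.
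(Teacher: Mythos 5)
Your proposal is correct and follows essentially the same route as the paper: apply the proxy-regular cellularization formula $\Cell_k(M) \simeq \Hom_R(k,M)\otimes_{\cE}k$ to both $R$ and $\Hom_k(R,k)$, use the orientability hypothesis (which by \Cref{def:orientable} is exactly the right $\cE$-module equivalence $\Hom_R(k,R)\simeq \Sigma^a\Hom_R(k,\Hom_k(R,k))$), and then tensor with $k$ over $\cE$. Your explicit tracking of the two $\cE$-actions on $\Sigma^a k$ is just an unpacking of that definition, so the arguments coincide.
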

\begin{proof}		
		Using the Gorenstein orientable condition there are equivalences of right $\cal{E}$-modules
		\[
		\Hom_R(k,\Cell_k(R)) \simeq \Hom_R(k,R) \simeq \Sigma^a\Hom_R(k,\Hom_k(R,k)).
		\]
		By \eqref{lem:cellproxy}, applying $-\otimes_{\cal{E}}k$ to the latter equivalence gives rise to an equivalence of $R$-modules
		\[
\Cell_k(R) \simeq \Sigma^a\Cell_k(\Hom_k(R,k)).\qedhere
		\]
\end{proof}


\begin{defn}\label{defn:duality}
		Let $R$ be an augmented $k$-algebra. We say that $R$ has Gorenstein duality of shift $a$ if there is an equivalence of $R$-modules
		\[
\Cell_k(R) \simeq \Sigma^a \Hom_k(R,k). 
		\]
	\end{defn}

\begin{prop}\label{lem:orientable-duality}
If $R$ is a coconnective augmented $k$-algebra with $\pi_0R \cong k$ such that  $R \to k$ is Gorenstein orientable, then it satisfies Gorenstein duality. 
\end{prop}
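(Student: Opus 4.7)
The plan is to invoke \Cref{prop:Gorensteinduality}, which already gives $\Cell_k(R) \simeq \Sigma^a \Cell_k(\Hom_k(R,k))$. Thus Gorenstein duality will follow immediately from the claim that $\Hom_k(R,k)$ is already $k$-cellular, i.e., that it lies in $\Loc_R(k)$, the localizing subcategory of $\Mod_R$ generated by $k$. My entire effort goes into verifying this single claim.

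I would first observe that under the given hypotheses, $\Hom_k(R,k)$ is a connective $R$-module with $\pi_0 \cong k$: since $R$ is coconnective with $\pi_0R \cong k$, we compute $\pi_i \Hom_k(R,k) \cong \Hom_k(\pi_{-i}R,k)$, which vanishes for $i < 0$ and equals $k$ for $i = 0$. Moreover, $\pi_0 R \cong k$ guarantees that every homotopy group of any $R$-module is naturally a $k$-vector space, since the action of $\pi_0 R$ factors through the residue field.

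The heart of the argument is then the general principle that, under these coconnectivity hypotheses on $R$, every connective $R$-module belongs to $\Loc_R(k)$. I would prove this by a Postnikov-style cellular induction: a $k$-basis of $\pi_0 M$ determines an $R$-module map $\bigoplus_I k \to M$ inducing an isomorphism on $\pi_0$; the fiber $F$ is then $1$-connective with $\pi_1 F$ again a $k$-vector space, and the same procedure applies to $F$ using cells $\Sigma k$, then $\Sigma^2 k$, and so on. The resulting tower exhibits $M$ as a transfinite extension of direct sums of shifts of $k$, placing $M$ in $\Loc_R(k)$. Applied to $M = \Hom_k(R,k)$, this yields the desired cellularity, and combining with \Cref{prop:Gorensteinduality} produces the equivalence $\Cell_k(R) \simeq \Sigma^a \Hom_k(R,k)$.

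The main obstacle is justifying the cell-attachment step rigorously: one must produce honest $R$-module lifts of the $k$-linear surjections $\bigoplus_I k \to \pi_n F^{(n)}$ at each stage of the induction and argue that the resulting tower converges to $M$. This is where the coconnectivity of $R$ enters essentially, since the relevant lifting obstructions land in $\Ext$-groups over $\pi_* R$ whose vanishing in the required range is controlled precisely by $\pi_{>0}R = 0$ together with the bounded-below nature of each successive fiber. Once this cellular reconstruction is in hand, the proposition reduces to the application of \Cref{prop:Gorensteinduality} already described.
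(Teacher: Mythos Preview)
Your proposal is correct and follows essentially the same route as the paper: invoke \Cref{prop:Gorensteinduality} and then argue that $\Hom_k(R,k)$ is already $k$-cellular. The paper dispatches the cellularity claim in one line by citing \cite[Remark 3.17]{dgi_duality} (bounded-below $R$-modules are $k$-cellular when $R$ is coconnective with $\pi_0R\cong k$), whereas you unpack exactly that argument via the Postnikov-style induction; so your proof is simply a more detailed version of the same strategy.
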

\begin{proof}
 The $R$-module $\Hom_k(R,k)$ is $k$-cellular since $\Hom_k(R,k)$ is bounded below  \cite[Remark 3.17]{dgi_duality}. We can then deduce from \Cref{prop:Gorensteinduality} that $\Cell_k(R) \simeq \Sigma^a\Hom_k(R,k)$.
\end{proof}
Finally, we point out a useful way of recognizing when $R \to k$ is orientable Gorenstein. We need the following definition \cite[Section 8.11]{dgi_duality}.
\begin{defn}
	A $k$-algebra $R$ is said to satisfy Poincar\'e duality of dimension $a$ if there is an equivalence $R\rightarrow \Sigma^a\Hom_k(R,k)$ of $R$-modules.
\end{defn}
\begin{lem}\label{cor:Gorenteincosmall}
Let $R$ be an augmented $k$-algebra which is proxy-regular.  If $R$ satisfies Poincar\'e duality of dimension $a$, then it is orientable Gorenstein of shift $a$. If $R$ is additionally cosmall and coconnective, with $\pi_0R \cong k$, then the reverse implication is true.  
\end{lem}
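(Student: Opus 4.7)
The plan is to unpack both directions from the equivalence defining Poincaré duality, using adjunction for the forward direction and the already-proved \Cref{lem:orientable-duality} plus the cosmall hypothesis for the converse.

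For the forward direction, I would start from the assumed $R$-module equivalence $\varphi\colon R \xrightarrow{\simeq} \Sigma^a \Hom_k(R,k)$ and apply $\Hom_R(k,-)$. This immediately gives an equivalence
\[
\Hom_R(k,R) \simeq \Sigma^a \Hom_R(k,\Hom_k(R,k)).
\]
The right-hand term is computed by the extension-of-scalars/restriction adjunction for $k \to R$: since $R$ is a $k$-algebra, $\Hom_R(k,\Hom_k(R,k)) \simeq \Hom_k(k \otimes_R R,k) \simeq \Hom_k(k,k) \simeq k$. Hence $\Hom_R(k,R) \simeq \Sigma^a k$, so $R\to k$ is Gorenstein of shift $a$. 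Orientability is then automatic: both right $\mathcal{E}$-actions on $\Sigma^a k$ appearing in the two sides of the displayed equivalence arise from the single $R$-module map $\varphi$ by functoriality of $\Hom_R(k,-)$, so they are identified by construction.

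For the converse, I would assume additionally that $R$ is cosmall and coconnective with $\pi_0 R \cong k$, and apply \Cref{lem:orientable-duality} to obtain the Gorenstein duality equivalence
\[
\Cell_k(R) \simeq \Sigma^a \Hom_k(R,k).
\]
It then suffices to identify $\Cell_k(R)$ with $R$. This is where cosmallness enters: by definition, $R$ lies in the thick subcategory of $\Mod_R$ generated by $k$, hence in the localizing subcategory generated by $k$, so $R$ is itself $k$-cellular and the counit $\Cell_k(R) \to R$ is an equivalence. Combining yields the Poincaré duality equivalence $R \simeq \Sigma^a \Hom_k(R,k)$.

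The only mild subtlety I anticipate is bookkeeping the orientability in the forward direction, i.e., making sure the two a priori different $\mathcal{E}$-actions on $\Sigma^a k$ really are the ones produced by $\varphi$; this is a formal consequence of applying a single functor $\Hom_R(k,-)$ to an $R$-linear map and of the naturality of the tensor–hom adjunction in its first variable. Everything else reduces to standard adjunction and to direct invocations of \Cref{lem:orientable-duality} and \Cref{def:proxysmall}.
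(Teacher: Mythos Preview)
Your proof is correct and follows essentially the same approach as the paper: apply $\Hom_R(k,-)$ to the Poincar\'e duality equivalence for the forward direction (the paper defers the identification $\Hom_R(k,\Hom_k(R,k))\simeq k$ to \cite[Proposition 8.12]{dgi_duality} rather than spelling out the adjunction), and for the converse combine \Cref{lem:orientable-duality} with the observation that cosmallness forces $\Cell_k(R)\simeq R$. The only cosmetic difference is that the paper displays the intermediate step $\Cell_k(R)\simeq \Sigma^a\Cell_k(\Hom_k(R,k))$ from \Cref{prop:Gorensteinduality} before invoking $k$-cellularity of $\Hom_k(R,k)$, whereas you cite \Cref{lem:orientable-duality} directly.
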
	

\begin{proof}
 By \cite[Proposition 8.12]{dgi_duality}, $R$ is Gorenstein of shift $a$ and we get orientability by applying $\Hom_R(k,-)$ to the equivalence $R\simeq \Sigma^a \Hom_k(R,k)$.

On the other hand, if $R$ is also cosmall and coconnective with $\pi_0R \cong k$, then by \Cref{prop:Gorensteinduality} and \Cref{lem:orientable-duality}, we have that
\[R\simeq \Cell_k(R)\simeq  \Sigma^a\Cell_k(\Hom_k(R,k))\simeq \Sigma^a \Hom_k(R,k),
\]
so that $R$ satisfies Poincar\'e duality of dimension $a$. 
\end{proof}

\begin{rem}\label{rem:gorenstein_not_duality}
Unlike in algebra, if a map of commutative ring spectra $R \to k$ is Gorenstein, then $R$ does not necessarily satisfy Gorenstein duality. For example, let $M$ be a non-oriented compact manifold, e.g., $M = \mathbb{RP}^2$, and take $R = C^*(M;\Z/4)$ the ring spectrum of $\Z/4$-valued cochains on $M$. Then $R$ is Gorenstein, see \cite[Example 11.2(ii)]{Greenlees2007First}, but cannot satisfy Gorenstein duality. 

Indeed, we will argue that for a connected finite $CW$-complex $X$ and a discrete commutative self-injective ring $k$, the ring spectrum $R = C^*(X;k)$ satisfies Gorenstein duality if and only if $X$ satisfies Poincar\'e duality with respect to $k$. Since $M$ is not orientable, it does not satisfy Poincar\'e duality with respect to the self-injective ring $k = \Z/4$, thus $C^*(M;\Z/4)$ cannot satisfy Gorenstein duality. 

In order to prove the claim, we note that, on the one hand, $X$ being a finite $CW$-complex implies that $C^*(X;k) \to k$ is cosmall \cite[Section 5.5(1)]{dgi_duality}, which shows that $\Cell_k(R) \simeq R$. On the other hand, the spectral sequence
\[
\Ext_k^*(\pi_*R,k) \implies \pi_*\Hom_k(R,k)
\]
collapses to an isomorphism $\pi_*\Hom_k(R,k) \cong \Ext^0_k(\pi_*R,k)$. Therefore, there is an equivalence $\Cell_k(R) \simeq \Sigma^a \Hom_k(R,k)$ if and only if
\[
H^{*}(X;k) \cong \pi_{-*}(R) \cong \pi_{-*}\Cell_k(R) \cong \pi_{-*}\Sigma^a \Hom_k(R,k) \cong \Ext^0_k(H^{*-a}(X;k),k),
\]
i.e., if and only if $X$ satisfies Poincar\'e duality with coefficients in $k$. 
\end{rem}

\subsection{Local Gorenstein duality}\label{sec:localgorenstein}

	Classically, if $A$ is a discrete commutative Gorenstein ring, then so is the localization $A_{\fp}$ for any prime ideal $\fp \in \Spec(A)$. The proof involves the characterization of Gorenstein rings as those rings with finite injective dimension, and so there is no obvious generalization to the case of ring spectra. Rather, we will identify conditions where the duality condition of \Cref{defn:duality} localizes. For this we need to explain what we mean by localizing a ring spectrum $R$ at a prime ideal $\fp \in \Spec^h(\pi_*R)$. Namely, following \cite{benson_local_cohom_2008} or \cite{bhv2} we explain how, given any $\frak p \in \Spec^h(\pi_*R)$, we can define a functor $\Gamma_{\frak p} \colon \Mod_R \to \Mod_R$, which is a spectral version of classical local cohomology.

We briefly describe one way to construct $\Gamma_{\frak p}$. For any such $\frak p$ there exists a ring spectrum ${R}_{\frak p}$ with homotopy $(\pi_*R)_{\frak p}$, and a natural morphism $R \to R_{\frak p}$, see \cite[Chapter V.1]{ekmm} for example. By extension of scalars there is a functor $\Mod_R \to \Mod_{R_{\frak p}}$ sending $M$ to $M_{\frak p} = M \otimes_R R_{\frak p}$. We can then construct a Koszul object $\kos{R_{\frak p}}{\frak p}$ inside $\Mod_{R_{\frak p}}$, see \cite[Section 3.1]{bhv2}. The localizing subcategory generated by $\kos{R_{\frak p}}{\frak p}$ inside of $\Mod_{R_{\frak p}}$ is denoted $\Mod_{R_{\frak p}}^{\frak{p}-\mathrm{tors}}$, the category of $\frak p$-local and $\frak p$-torsion objects. The inclusion $\Mod_{R_{\frak p}}^{\frak{p}-\mathrm{tors}} \to \Mod_{R_{\frak p}}$ has a right adjoint $\Gamma_{\cal{V}(\frak p)}$ (this is equivalent to the local cohomology functor constructed by Greenlees and May in \cite[Section 3]{greenleesmay_completions}). We then define $\Gamma_{\frak p}M = \Gamma_{\cal{V}(\frak p)}M_{\frak p}$. 

Under some conditions we can identify $\Cell_k$ with a local cohomology functor. 
\begin{lem}\label{lem:cellequiv}
	Let  $k$ be a field, and $R$  a coconnective commutative augmented $k$-algebra. Assume that $\pi_*R$ is a Noetherian local ring and that the augmentation map induces an isomorphism $\pi_0R \cong k \cong (\pi_*R)/\frak m$. Then, the functors $\Cell_k$ and $\Gamma_{\frak m}$ are equivalent. 
\end{lem}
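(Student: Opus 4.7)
The plan is to identify both functors as the right adjoint to the inclusion of a single localizing subcategory of $\Mod_R$. Since $\pi_*R$ is local with maximal ideal $\mathfrak{m}$, the canonical map $R\to R_{\mathfrak{m}}$ is an equivalence, and $M_{\mathfrak{m}}\simeq M$ for every $M\in\Mod_R$; thus $\Gamma_{\mathfrak{m}} = \Gamma_{\cal V(\mathfrak{m})}$, which is the right adjoint to the inclusion $\Loc_R(\kos{R}{\mathfrak{m}})\hookrightarrow \Mod_R$. On the other hand, $\Cell_k$ is by definition the right adjoint to $\Loc_R(k)\hookrightarrow \Mod_R$. Hence it suffices to verify that
\[
\Loc_R(k) \;=\; \Loc_R(\kos{R}{\mathfrak{m}}).
\]

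First, I would construct an explicit Koszul witness. Since $\pi_*R$ is Noetherian, I pick homogeneous generators $x_1,\ldots,x_n$ of $\mathfrak{m}$ and form the iterated cofiber $K=\kos{R}{(x_1,\ldots,x_n)}\in\Mod_R$, which is a compact $R$-module. By construction, $\pi_*K$ is annihilated by $(x_1,\ldots,x_n)=\mathfrak{m}$, so it is a module over $\pi_*R/\mathfrak{m}\cong k$; being finitely generated over the Noetherian ring $\pi_*R$, it is in fact a finite-dimensional graded $k$-vector space. The coconnectivity of $R$ forces the $x_i$ to live in non-positive degrees, and hence $K$ itself is coconnective with bounded, finite-dimensional graded homotopy.

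Next, I would prove $K\in\Thick_R(k)$ by induction on the number of nonzero homotopy groups of $K$. If $\pi_iK$ is the top nonzero homotopy group, regard it as a $k$-module, hence as an $R$-module via the augmentation $R\to k$; the truncation map $K\to \Sigma^i(\pi_iK)$ is an $R$-module map whose target is a finite sum of shifts of $k$ and whose fiber has strictly fewer nonzero homotopy groups. The inductive hypothesis then places the fiber, and thus $K$, in $\Thick_R(k)$. For the reverse inclusion, the canonical map $R\to k$ makes $k$ an $\mathfrak{m}$-torsion $R$-module (each $x_i$ acts nullhomotopically on $k$), and the $\mathfrak{m}$-torsion subcategory is $\Loc_R(K)$ by the construction recalled in \Cref{sec:localgorenstein} (cf.\ \cite[Section 3.1]{bhv2}); thus $k\in\Loc_R(K)$. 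Combining the two inclusions gives the desired equality of localizing subcategories, and the lemma follows from the uniqueness of right adjoints.

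The main technical point is the Postnikov-type argument producing $K$ from finitely many shifts of $k$. This relies essentially on the combination of three hypotheses: the coconnectivity of $R$ (to ensure $K$ is bounded on one side), the Noetherianity of $\pi_*R$ (to ensure the Koszul homology is finitely generated), and the identification $\pi_0R\cong k\cong\pi_*R/\mathfrak{m}$ (so that each $\pi_iK$ is a $k$-vector space on which the truncation map admits a model as an $R$-module map through the augmentation). Once this is established the identification of the two localizing subcategories, and hence of their colocalizations, is formal.
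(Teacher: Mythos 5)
Your reduction of the lemma to the equality of localizing subcategories $\Loc_R(k)=\Loc_R(\kos{R}{\mathfrak m})$ is a sound framework, and the inclusion $k\in\Loc_R(\kos{R}{\mathfrak m})$ is fine (the homotopy of $k$ is $\mathfrak m$-torsion, so $\Gamma_{\cal V(\mathfrak m)}k\simeq k$ by the local cohomology spectral sequence, citable from \cite{bhv2}). The gap is in the other inclusion, i.e.\ in your Postnikov-style proof that $K=\kos{R}{(x_1,\dots,x_n)}$ lies in $\Thick_R(k)$. Two specific problems. First, $\mathfrak m$ does not annihilate $\pi_*K$ on the nose: for a Koszul object one only knows that $x_i^2$ (hence a power of $\mathfrak m$) acts trivially on $\pi_*\kos{R}{x_i}$, so $\pi_*K$ is $\mathfrak m$-power torsion rather than a $\pi_*R/\mathfrak m$-module. (The conclusion you want survives: each $\pi_jK$ is a $\pi_0R\cong k$-vector space, and finite generation plus $\mathfrak m$-power torsion over the Noetherian local ring $\pi_*R$ gives finite length, hence bounded, finite-dimensional homotopy — but your stated reason is wrong.) Second, and more seriously, the inductive step rests on the assertion that "the truncation map $K\to\Sigma^i(\pi_iK)$ is an $R$-module map". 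For a non-connective ring spectrum $R$ there is no $R$-linear Postnikov truncation to appeal to; indeed, as a map of spectra the projection onto the \emph{top} homotopy group is not a truncation at all (truncations map onto the bottom, covers map in from the top), and for coconnective $R$ one can easily have $R$-modules $M$ and degrees $j$ with $\pi_jM\neq 0$ but no nonzero $R$-map between $M$ and $\Sigma^jk$ in the naive direction (e.g.\ $\Hom_R(k,R)\simeq\Sigma^{-1}k$ for $R=C^*(S^1;k)$ rules out splitting off $\pi_0R$ from below). So the existence of your map is exactly the non-formal content of the lemma, not a formality.

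The statement you need is true, but proving it requires using coconnectivity and $\pi_0R\cong k$ in an essential way: by the extension/restriction adjunction, $R$-module maps $K\to\Sigma^i k$ correspond to $k$-linear functionals on $\pi_i(k\otimes_RK)$, so one must show that $\pi_iK\to\pi_i(k\otimes_RK)$ is injective in the top degree $i$; this follows from a degree count (the fiber $\bar R=\fib(R\to k)$ has homotopy in degrees $\leq -1$, so in a minimal resolution/K\"unneth argument nothing can interfere in the top degree), and only then does your induction on $\dim_k\pi_*K$ run. This missing argument is precisely what the paper outsources: its proof instead identifies $\Gamma_{\cal V(\mathfrak m)}M$ with $K_\infty\otimes_RM$ for the stable Koszul complex $K_\infty$ via Greenlees--May, and then quotes the proof of \cite[Proposition 9.3]{dgi_duality}, where the comparison with $\Cell_k$ (equivalently, the fact that finite Koszul objects are finitely built from $k$ over such $R$) is carried out. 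So your architecture is a legitimate variant, but as written the crucial step is asserted rather than proved.
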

\begin{proof}
	This is a consequence of the proof of \cite[Proposition 9.3]{dgi_duality}. We sketch the details for the benefit of the reader. First note that $\Gamma_{\frak m} = \Gamma_{\cal{V}(\frak m)}$. It then suffices to show that $\Gamma_{\cal{V}(\frak m)}M \simeq \Cell_k(M)$.

	Suppose now that $\frak m = (x_1,\dots,x_n)$. Let $K_{\infty} = K_{\infty}(x_1) \otimes_R \cdots \otimes_R K_{\infty}(x_n)$, where $K_{\infty}(x_i)$ is the fiber of the map $R \to R[1/x_i]$. 
	 By \cite{greenleesmay_completions} there is an equivalence $\Gamma_{\cal{V}(\frak m)}M = K_{\infty} \otimes_R M$.  By the proof of \cite[Proposition 9.3]{dgi_duality} the assumptions of the lemma give rise to an equivalence $K_\infty \otimes_R M \simeq \Cell_k(M)$ and we are done. 
\end{proof}
Thus, for our local version of Gorenstein duality we will replace $\Cell_k(R)$ with $\Gamma_{\fp}R$.  The next question is what the analog of $\Hom_k(R,k)$ should be in general. From now on we write $\cal{I}_R = \Hom_k(R,k)$. Suppose that we are still under the conditions of  \Cref{lem:cellequiv}, then $\pi_* \cal{I}_R \cong \pi_*\Hom_k(R,k) \cong \Hom_k(\pi_*R,k) \cong I_{\frak m}$, the injective hull of $(\pi_*R)/\frak m \cong k$, see \cite[Example 3.41]{lambook}. By Brown representability, there is an $R$-module spectrum $T_R(I_{\frak m})$ such that $\pi_*T_R(I_{\frak m}) \cong I_{\frak m}$. Then $\cal{I}_R\simeq T_R(I_{\frak m})$. More generally, for any injective $\pi_*R$-module $I$, as in \cite[Section 4]{bhv2} we can construct an $R$-module $T_R(I)$ such that $\pi_*T_R(I) \cong I$. These spectra are characterized by the property that for any $M \in \Mod_R$ there is an isomorphism of graded $\pi_*R$-modules
\[
\pi_*\Hom_R(M,T_R(I))\cong \Hom_{\pi_*R}(\pi_*M,I). 
\]
We let $I_{\frak p}$ denote the injective hull of $(\pi_*R)/\frak p$.  The spectra $T_R(I_{\frak p})$ are our local substitutes for $\cal{I}_R$. Together, we get the following definition. 

\begin{defn}\label{defn:localizedgorensteinduality}
		Let $R$ be a ring spectrum. We say that $R$ satisfies local Gorenstein duality with shift $a$ if, for each $\frak p \in \Spec^h(\pi_*R)$ of dimension $d$,\footnote{The dimension (also known as the coheight) of a prime ideal $\frak p$ is the Krull dimension of $\pi_*R/\frak p$.} there is an equivalence $\Gamma_{\frak p}R \simeq \Sigma^{a+d} T_R(I_{\frak p})$. 
\end{defn}

\begin{rem}
	As we have discussed, under the conditions of \Cref{lem:cellequiv}, this reduces to the Gorenstein duality condition 
	\[
\Cell_k (R) \simeq \Sigma^a \Hom_k(R,k)
	\]
	in the case that $\frak p$ is the maximal ideal $\frak m$. 
\end{rem}
Finally, we point out some properties of rings satisfying local Gorenstein duality. Here we denote the internal shift functor in graded modules by $\Sigma$ as well. 
\begin{thm}\label{thm:green_lyub}
	Let $R$ be a ring spectrum satisfying local Gorenstein duality of shift $a$. Then the following hold.
\begin{enumerate}
	\item There is an isomorphism of $R$-modules $\pi_*\Gamma_{\fp}R \cong \Sigma^{a+d}I_{\frak p}$.
	\item There is a spectral sequence
	\[
E_2^{s,t} \cong H_{\frak p}^{-s,t}(\pi_*R)_{\frak p} \implies \pi_{s+t-a-d}T_R(I_{\frak p}) \cong \Sigma^{a+d-s-t}I_{\frak p}. 
	\]
	\item $\pi_*R$ is generically Gorenstein, i.e., the localization at any minimal prime ideal is Gorenstein. 
	\item There are no nontrivial $R$-module phantom maps into $\Gamma_{\frak p}R$. 
\end{enumerate}
\end{thm}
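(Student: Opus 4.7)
The plan is to establish each of the four assertions as a direct consequence of the defining local Gorenstein duality equivalence $\Gamma_{\frak p}R \simeq \Sigma^{a+d}T_R(I_{\frak p})$, reducing everything to standard properties of the spectra $T_R(I_{\frak p})$.

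First, (1) is immediate: applying $\pi_*$ to the equivalence and invoking the defining property $\pi_*T_R(I_{\frak p}) \cong I_{\frak p}$ yields the claim. For (2), I would appeal to the local cohomology spectral sequence of Greenlees--May, which arises from the Koszul-type model of $\Gamma_{\frak p}$ recalled in \Cref{sec:localgorenstein}. It has the form
\[
E_2^{s,t} \cong H_{\frak p}^{-s,t}((\pi_*R)_{\frak p}) \Longrightarrow \pi_{s+t}\Gamma_{\frak p}R,
\]
and rewriting the abutment by means of (1) gives the stated formula. Neither (1) nor (2) requires work beyond citing existing machinery.

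For (3), I would specialize to a minimal prime $\frak p$ of $\pi_*R$, so that $(\pi_*R)_{\frak p}$ is Artinian. At the maximal ideal of an Artinian local ring the local cohomology is concentrated in cohomological degree zero and reproduces the ring itself, so the spectral sequence in (2) collapses and identifies $\pi_*\Gamma_{\frak p}R$ with $(\pi_*R)_{\frak p}$ (up to internal shift). Comparing with (1) yields $(\pi_*R)_{\frak p} \cong \Sigma^{a+d}I_{\frak p}$ as $(\pi_*R)_{\frak p}$-modules, which exhibits $(\pi_*R)_{\frak p}$ as a shift of the injective hull of its own residue field — the classical characterization of a zero-dimensional Gorenstein local ring.

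For (4), the approach is to exploit the pure-injectivity of each $T_R(I_{\frak p})$, which is inherited from the fact that $I_{\frak p}$ is an injective $\pi_*R$-module. The characterizing natural isomorphism $\pi_*\Hom_R(M, T_R(I_{\frak p})) \cong \Hom_{\pi_*R}(\pi_*M, I_{\frak p})$ recorded in \Cref{sec:localgorenstein} shows that any map from $M$ into $T_R(I_{\frak p})$ is detected by its effect on $\pi_*M$; a phantom map vanishes on all compact objects, hence on every finitely generated piece of $\pi_*M$, and therefore on $\pi_*M$ itself by writing it as a filtered colimit of such pieces. I expect the main obstacle to be a careful justification of this last compact-approximation step — that phantom-ness detected by compact $R$-modules really forces vanishing on homotopy — which ultimately rests on a Noetherianness property of $\pi_*R$ that is implicit in the setup of local Gorenstein duality and has been developed in \cite{bhv2}.
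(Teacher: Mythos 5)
Parts (1)--(3) of your argument coincide with the paper's proof: (1) is read off from the defining equivalence and $\pi_*T_R(I_{\frak p})\cong I_{\frak p}$, (2) is the local cohomology spectral sequence (the paper cites \cite[Proposition 3.19]{bhv2}), and (3) is exactly the collapse argument at a minimal prime, which you spell out in slightly more detail than the paper does; note only that identifying $(\pi_*R)_{\frak p}$ with a shift of $I_{\frak p}$ and invoking the zero-dimensional Gorenstein characterization is precisely what the paper leaves implicit, so nothing genuinely different happens there.

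For (4) the paper simply cites \cite[Lemma 3.2]{BarthelHeardValenzuela2018algebraic} for the absence of phantom maps into $T_R(I_{\frak p})$, whereas you sketch the underlying argument; your sketch is essentially right, but the step you single out as the ``main obstacle'' is not an obstacle at all, and your proposed route through finitely generated submodules, filtered colimits, and a Noetherianness property of $\pi_*R$ is a red herring. The clean argument is: every class in $\pi_nM$ is represented by a map $\Sigma^nR\to M$ from a compact object, so a phantom map $f\colon M\to T_R(I_{\frak p})$ satisfies $\pi_*(f)=0$ with no approximation needed; on the other hand, the Brown representability isomorphism $\pi_0\Hom_R(M,T_R(I_{\frak p}))\cong\Hom_{\pi_*R}(\pi_*M,I_{\frak p})$ is, by construction, given by $f\mapsto\pi_*(f)$ composed with the identification $\pi_*T_R(I_{\frak p})\cong I_{\frak p}$, so vanishing on homotopy already forces $f\simeq 0$. (Pure-injectivity of $T_R(I_{\frak p})$ is a consequence of this detection property rather than an input.) Transporting along the equivalence $\Gamma_{\frak p}R\simeq\Sigma^{a+d}T_R(I_{\frak p})$ then gives (4), exactly as in the paper.
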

\begin{proof}
	(1) is an immediate consequence from the definition of local Gorenstein duality and the fact that $\pi_*T_R(I_{\frak p}) \cong I_{\frak p}$. (2) follows from (1) and the local cohomology spectral sequence, see for example \cite[Proposition 3.19]{bhv2}. (3) follows from the spectral sequence in (2). Indeed, if $\frak p$ is minimal, then the localized ring $(\pi_*R)_{\fp}$ is of dimension 0, and hence $H_{\frak p}^{-s,t}(\pi_*R)_{\frak p} = 0$ whenever $s \ne 0$, and the spectral sequence collapses. 
	 For (4), it is shown in \cite[Lemma 3.2]{BarthelHeardValenzuela2018algebraic} that there are no nontrivial phantom maps into $T_R(I_{\frak p})$, and hence no phantom maps into $\Gamma_{\frak p}R$. 
\end{proof}
\begin{rem}\label{rem:rickard}
  The functors $\Gamma_{\frak p}$ can be defined more generally for any triangulated category with a central action of a (discrete) commutative Noetherian ring $A$. In the case of the stable module category of a finite group $G$ over a field $k$, the ring $A = H^*(BG;k)$ is the group cohomology, and $\Gamma_{\frak p}k$ corresponds to the Rickard idempotent denoted $\kappa_V$ in \cite{BensonCarlsonRickard1997Thick} for $V$ an irreducible subvariety of $\Spec^h(H^*(BG;k))$ corresponding to $\frak p$. For a detailed discussion of the comparison, see page 30 of \cite{benson_local_cohom_2008}. As shown in \cite[Theorem 2.4]{bg_localduality}, the Tate cohomology $\widehat H^*(G;\kappa_V) \cong \Sigma^d I_{\frak p}$, the injective hull of $H^*(BG;k)/\frak p$ in the category of $H^*(BG;k)$-modules, shifted by the dimension of $\frak p$. The result \Cref{thm:green_lyub}(1) above is the spectral generalization of this, and shows how the homotopy of the spectrum $\Gamma_{\frak p}R$ is completely determined by the homotopy groups $\pi_*R$. 
\end{rem}
Finally we introduce a useful way of identifying spectra satisfying local Gorenstein duality. The following notion was introduced in \cite[Definition 4.5]{bhv2}.

\begin{defn}\label{defn:alg_gorenstein}
A ring spectrum $R$ with $\pi_*(R)$ local Noetherian of dimension $n$ is algebraically Gorenstein of shift $a$ if $\pi_*(R)$ is a graded Gorenstein ring of the same shift. 
\end{defn}

\begin{prop}\cite[Proposition 4.7]{bhv2}\label{prop:4.7bhv2}
Let $R$ be a ring spectrum. If $R$ is algebraically Gorenstein of shift $a$, then $R$ satisfies local Gorenstein duality of shift $a$.
\end{prop}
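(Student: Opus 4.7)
The plan is to fix $\frak p \in \Spec^h(\pi_*R)$ of dimension $d$ and produce the claimed equivalence $\Gamma_{\frak p}R \simeq \Sigma^{a+d}T_R(I_{\frak p})$ in two steps: first, compute $\pi_*\Gamma_{\frak p}R$ via the local cohomology spectral sequence, exploiting the hypothesis that $\pi_*R$ is graded Gorenstein; second, promote the resulting identification of homotopy groups to an equivalence of $R$-modules using the defining universal property of the Matlis-type module $T_R(I_{\frak p})$ recalled in \Cref{sec:localgorenstein}.

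For the homotopy calculation, I would first note that since localization preserves the Gorenstein property of a graded local ring, $(\pi_*R)_{\frak p}$ is a graded Gorenstein local ring of Krull dimension $n-d$. By the classical characterization of graded Gorenstein local rings via local cohomology, the modules $H^i_{\frak p}((\pi_*R)_{\frak p})$ vanish for $i \neq n-d$, and $H^{n-d}_{\frak p}((\pi_*R)_{\frak p})$ is isomorphic to $I_{\frak p}$ up to an internal grading shift determined by $a$ and $n-d$. Inserting this into the local cohomology spectral sequence used in the proof of \Cref{thm:green_lyub},
\[
E_2^{s,t} = H^{-s,t}_{\frak p}((\pi_*R)_{\frak p}) \implies \pi_{s+t}\Gamma_{\frak p}R,
\]
the $E_2$-page is concentrated along the single line $s = d-n$, so the spectral sequence collapses, and a degree chase identifies the abutment as $\pi_*\Gamma_{\frak p}R \cong \Sigma^{a+d}I_{\frak p}$ as graded $\pi_*R$-modules.

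For the lifting step, recall that $T_R(I_{\frak p})$ is characterized by the universal isomorphism
\[
\pi_*\Hom_R(M, T_R(I_{\frak p})) \cong \Hom_{\pi_*R}(\pi_*M, I_{\frak p})
\]
for $M \in \Mod_R$. Applying this with $M = \Gamma_{\frak p}R$ and using the homotopy identification obtained above, the identity element of $\Hom_{\pi_*R}(\Sigma^{a+d}I_{\frak p}, \Sigma^{a+d}I_{\frak p})$ corresponds to a canonical $R$-module map $\Gamma_{\frak p}R \to \Sigma^{a+d}T_R(I_{\frak p})$, and this map is the identity on $\pi_*$ by construction, hence an equivalence.

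The main obstacle I anticipate is the grading bookkeeping in the first step: one must verify that the combined contribution of the graded Gorenstein shift $a$ of $\pi_*R$ and the shift intrinsic to the identification of $H^{n-d}_{\frak p}((\pi_*R)_{\frak p})$ with $I_{\frak p}$ yields exactly the total-degree shift $a+d$ required in \Cref{defn:localizedgorensteinduality}. Once this purely algebraic computation of local cohomology of graded Gorenstein localizations is settled, the second step is formal and amounts to Brown representability for the modules $T_R(I_{\frak p})$.
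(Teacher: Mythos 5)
Your proposal is correct and follows essentially the same route as the proof the paper points to (\cite[Proposition 4.7]{bhv2}): the local cohomology spectral sequence collapses to a single column because $(\pi_*R)_{\frak p}$ is Gorenstein, and the resulting isomorphism $\pi_*\Gamma_{\frak p}R \cong \Sigma^{a+d}I_{\frak p}$ is lifted to an equivalence of $R$-modules via the universal property of $T_R(I_{\frak p})$, exactly as you describe. The only point to make explicit is that the uniform identification $H^{n-d}_{\frak p}\bigl((\pi_*R)_{\frak p}\bigr) \cong \Sigma^{a+d}I_{\frak p}$ for all homogeneous primes is Greenlees--Lyubeznik's dual localization theorem \cite{green_lyub} (cf.\ \eqref{eq:local_gorenstein} in the introduction) rather than a mere degree chase, though your outline invokes this input at precisely the right place.
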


\subsection{The relative Gorenstein condition}\label{ssec:relativegorenstein}
Let $R\to k$ and $S\to k$ be morphisms of ring spectra and $S \xr{f} R$ be a morphism of ring spectra over $k$.  We have restriction of scalars $f^*\colon \Mod_R\to \Mod_S$ with left adjoint $f_*$ and right adjoint $f_!$. Note that if $f$ is relatively Gorenstein, then $f_!(S) \simeq \Sigma^aR$. 
The purpose of this section is to identify conditions guaranteeing that $f \colon S \to R$ is relatively Gorenstein. First we observe the following:

\begin{lem}\label{lem:gorenstein_iff}
Let $R\to k$ and $S\to k$ be proxy-regular morphisms of ring spectra, and $f\colon S\to R$ be a relative Gorenstein ring morphism over $k$. Then $S$ is Gorenstein if and only if $R$ is so. 
\end{lem}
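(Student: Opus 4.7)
The plan is to prove the equivalence by a direct adjunction computation relating $\Hom_R(k,R)$ to $\Hom_S(k,S)$ via the relative Gorenstein equivalence $\Hom_S(R,S)\simeq \Sigma^a R$. Both sides of the biconditional already assume proxy-regularity, so the only content is to compare the two $k$-vector spaces and show that one is equivalent to a shift of $k$ precisely when the other is.

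First I would unwind the adjunctions. In the paper's convention, $f^*\colon \Mod_R \to \Mod_S$ is restriction of scalars, with right adjoint $f_!(-)=\Hom_S(R,-)\colon \Mod_S \to \Mod_R$. The relative Gorenstein hypothesis says $f_!(S)\simeq \Sigma^a R$, so viewing $k$ as an $R$-module (via $R\to k$) and applying the adjunction $(f^*,f_!)$ yields
\[
\Hom_R\bigl(k,\Sigma^a R\bigr)\;\simeq\;\Hom_R\bigl(k,f_!(S)\bigr)\;\simeq\;\Hom_S\bigl(f^*k,S\bigr)\;=\;\Hom_S(k,S),
\]
where in the last equality we note that $f^*k$ is just $k$ regarded as an $S$-module via the composite $S\xrightarrow{f} R\to k$. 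Rearranging gives the key equivalence of $k$-modules
\[
\Hom_S(k,S)\;\simeq\;\Sigma^a\,\Hom_R(k,R).
\]

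From this single equivalence both implications of the lemma follow at once: if $\Hom_R(k,R)\simeq \Sigma^r k$, then $\Hom_S(k,S)\simeq \Sigma^{r+a}k$, showing that $S\to k$ is Gorenstein of shift $r+a$; conversely, if $\Hom_S(k,S)\simeq \Sigma^s k$, then $\Hom_R(k,R)\simeq \Sigma^{s-a}k$, so $R\to k$ is Gorenstein of shift $s-a$. Combined with the standing proxy-regularity hypotheses on $R\to k$ and $S\to k$, this gives the Gorenstein condition in each direction.

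I do not expect any genuine obstacle here: the only thing to be careful about is making the adjunction identification $\Hom_R(k,\Hom_S(R,S))\simeq \Hom_S(k,S)$ precise (which just needs that the $S$-action on $k$ obtained via restriction from $R$ agrees with the one coming from $f\colon S\to R$ composed with $R\to k$, which holds since $f$ is a morphism over $k$). Everything else is formal manipulation of shifts.
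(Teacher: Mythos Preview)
Your argument is correct and is essentially the same as the paper's: both use the $(f^*,f_!)$ adjunction together with $f_!(S)\simeq \Sigma^a R$ to obtain $\Hom_S(k,S)\simeq \Sigma^a\Hom_R(k,R)$, from which the biconditional is immediate.
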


\begin{proof} This follows from the following identities where the third one holds because $f$ is relative Gorenstein:
\[\Hom_S(k,S)\simeq \Hom_S(f^*(k),S)\simeq \Hom_R(k, f_!(S))\simeq \Hom_R(k, \Sigma^a R)\simeq \Sigma^a \Hom_R(k, R).\qedhere\]
\end{proof}

At this point, we need to recall the notion of dc-completeness. There is a canonical morphism $R \to \End_{\cal{E}}(k)= \widehat{R}$ induced from the $R$-module action on $k$, and we say that $R$ is dc-complete if this map is an equivalence.  Recall that  we write $\cal{I}_R = \Hom_k(R,k)$. We want to describe hypothesis on $R$ and $S$ which allow us to identify when a morphism is relative Gorenstein.

\begin{prop}\label{prop:completion}
Suppose that $R$ is an augmented $k$-algebra, and that $R \to k$ is orientable Gorenstein of shift $a$, then 
	\[
\Hom_R(\Cell_k(R),\cal{I}_R) \simeq \Sigma^{-a}\widehat{R}. 
	\] 
\end{prop}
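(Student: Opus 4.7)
The plan is to reduce the computation of $\Hom_R(\Cell_k(R),\cal{I}_R)$ to the identification of $\widehat{R}$ as the endomorphism ring of a particular $\cal{E}$-module, using \Cref{prop:Gorensteinduality} to trade in $\Cell_k(R)$ for $\Sigma^a\Cell_k(\cal{I}_R)$.

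First, I would invoke the orientable Gorenstein hypothesis via \Cref{prop:Gorensteinduality}, which provides an equivalence $\Cell_k(R) \simeq \Sigma^a \Cell_k(\cal{I}_R)$ of $R$-modules. Applying $\Hom_R(-,\cal{I}_R)$ and pulling the suspension out gives the reduction
\[
\Hom_R(\Cell_k(R),\cal{I}_R) \simeq \Sigma^{-a}\Hom_R(\Cell_k(\cal{I}_R),\cal{I}_R).
\]
This is the single place where orientability enters; the rest of the argument is a formal manipulation of adjunctions.

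Next, I would use proxy-regularity of $R\to k$ together with the cellularization formula \eqref{lem:cellproxy} to write $\Cell_k(\cal{I}_R) \simeq \Hom_R(k,\cal{I}_R)\otimes_{\cal{E}}k$, and then apply the tensor-hom adjunction between $\cal{E}$-modules and $R$-modules to obtain
\[
\Hom_R\bigl(\Hom_R(k,\cal{I}_R)\otimes_{\cal{E}}k,\,\cal{I}_R\bigr) \simeq \Hom_{\cal{E}}\bigl(\Hom_R(k,\cal{I}_R),\,\Hom_R(k,\cal{I}_R)\bigr).
\]
Finally, I would identify $\Hom_R(k,\cal{I}_R)$: since $R$ is a $k$-algebra, the standard adjunction gives $\Hom_R(k,\Hom_k(R,k))\simeq \Hom_k(k\otimes_R R,k)\simeq k$, and this identification respects the right $\cal{E}$-action coming from the source $k$, so that $\Hom_R(k,\cal{I}_R)\simeq k$ as right $\cal{E}$-modules. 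Substituting yields $\Hom_{\cal{E}}(k,k) = \widehat{R}$ by the very definition of $\widehat{R}$, and combining with the $\Sigma^{-a}$ from the first step produces the claimed equivalence.

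The main technical obstacle I anticipate is bookkeeping of the $\cal{E}$-actions: the tensor-hom adjunction used in the middle step requires $\Hom_R(k,\cal{I}_R)$ to be viewed as a right $\cal{E}$-module in a way compatible with the action inherited from $k$, and one needs to check that the identification $\Hom_R(k,\cal{I}_R)\simeq k$ is an equivalence of right $\cal{E}$-modules and not merely of $k$-modules, so that the endomorphism object is genuinely $\End_{\cal{E}}(k)=\widehat{R}$. Once this action-tracking is accepted, the proof is essentially a three-line chain of adjunctions.
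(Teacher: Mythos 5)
Your proof is correct and takes essentially the same route as the paper's: both reduce, via the cellularization formula \eqref{lem:cellproxy} and standard hom--tensor adjunctions, to $\Hom_{\cal{E}}(k,k)=\widehat{R}$, with orientability used exactly once to pin down the relevant right $\cal{E}$-action (you through \Cref{prop:Gorensteinduality}, the paper by substituting $\Hom_R(k,R)\simeq\Sigma^a k$ directly into $\Cell_k(R)\simeq\Hom_R(k,R)\otimes_{\cal{E}}k$ and then applying $\Hom_k(-,k)$). The reordering of the adjunction steps, and your explicit check that $\Hom_R(k,\cal{I}_R)\simeq k$ respects the $\cal{E}$-action, are cosmetic differences from the paper's argument.
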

\begin{proof}
	By \eqref{lem:cellproxy} and the orientable Gorenstein condition, there is an equivalence \[
	\Cell_k(R) \simeq \Hom_R(k,R) \otimes_{\cal{E}}k \simeq \Sigma^a k \otimes_{\cal{E}}k.\] By the definition of $\cal{I}_R$,
	\[
\Hom_R(\Cell_k(R),\cal{I}_R) \simeq \Hom_k(\Cell_k(R),k). 
	\]
	Substituting for $\Cell_k(R)$, we see that 
	\[
	\begin{split}
\Hom_R(\Cell_k(R),\cal{I}_R) \simeq \Hom_k(\Sigma^a k \otimes_{\cal{E}}k,k)&\simeq \Sigma^{-a}\Hom_\cal{E}(k,k) =\Sigma^{-a} \widehat{R},		
	\end{split}
	\]
	as required.
\end{proof}
We require two more lemmas. The first follows by a simple adjunction argument.
\begin{lem}\label{lem:coind}
	Let $R\to k$ and $S\to k$ be ring homomorphisms and $S \xr{f} R$ be a morphism of ring spectra over $k$. There is an equivalence of $S$-modules $\Hom_S(R,\cal{I}_S) \simeq \cal{I}_R$. 
\end{lem}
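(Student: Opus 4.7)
The plan is to recognize the claimed equivalence as a direct instance of the restriction--coinduction adjunction associated with the unit map $k \to S$. First I would unwind the definitions $\cal{I}_S = \Hom_k(S,k)$ and $\cal{I}_R = \Hom_k(R,k)$, so that the target statement becomes
\[
\Hom_S(R, \Hom_k(S,k)) \simeq \Hom_k(R,k),
\]
where $R$ is regarded as an $S$-module via $f$.

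Next I would invoke the standard coinduction adjunction for the $k$-algebra $S$: the forgetful functor $\Mod_S \to \Mod_k$ is left adjoint to $\Hom_k(S,-)$, so for any $M \in \Mod_S$ and $N \in \Mod_k$ there is a natural equivalence
\[
\Hom_S(M, \Hom_k(S,N)) \simeq \Hom_k(M,N).
\]
Applying this with $M = R$ (viewed as an $S$-module by restriction along $f$) and $N = k$ immediately produces the desired equivalence at the level of underlying spectra.

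Finally, I would promote this equivalence to one of $S$-modules by appealing to naturality of the coinduction adjunction in the variable $M$: both $\Hom_S(R, \cal{I}_S)$ and $\cal{I}_R$ carry their $S$-action via the $S$-module structure on the source $R$, and the adjunction isomorphism is natural in $M$, hence $S$-equivariant. The only real ``obstacle'' is careful bookkeeping of the module structures; no substantive homotopical input is needed beyond the formal adjunction, which holds for any morphism of commutative ring spectra $k \to S$.
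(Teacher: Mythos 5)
Your proof is correct and is exactly the argument the paper has in mind: the paper gives no written proof beyond remarking that the lemma ``follows by a simple adjunction argument,'' and that argument is precisely the restriction--coinduction adjunction for $k \to S$ that you spell out, including the (standard) upgrade to an $S$-linear equivalence via the enriched/natural form of the adjunction.
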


For the second lemma, we observe that $k$ is both naturally an $S$-module and an $R$-module. The following compares the $k$-cellularization functor in the two categories.

\begin{lem}\label{lem:basechange}
	Let $S \xr{f} R$ be a morphism of ring spectra over $k$ and  $Q = R \otimes_S k$. Assume that $S \to k$ is proxy-regular with Koszul object $K(S)$.  Consider the following conditions:
\begin{enumerate}
	\item $Q \to k$ is cosmall. 
	\item $R \to k$ is proxy-regular with Koszul object $R \otimes_{S} K(S)$. 
	\item There is an equivalence of $S$-modules $\Cell_k^S(R) \simeq \Cell_k^R(R)$.
\end{enumerate}
	 Then $(1) \implies (2) \implies (3)$.
\end{lem}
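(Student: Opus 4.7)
The plan is to verify the three defining conditions of proxy-regularity for $R \to k$ with the proposed Koszul object to establish $(1) \Rightarrow (2)$, and then to invoke the universal property of $k$-cellularization to establish $(2) \Rightarrow (3)$. Throughout, I will freely use that restriction of scalars along $f \colon S \to R$ is exact and preserves arbitrary colimits, hence sends thick (resp.\ localizing) subcategories into the thick (resp.\ localizing) subcategory generated by the restricted generators.

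For $(1) \Rightarrow (2)$, set $K \coloneqq R \otimes_S K(S)$ and check the three conditions of \Cref{def:proxysmall}. Compactness in $\Mod_R$ is immediate because extension of scalars $R \otimes_S (-)$ is a left adjoint whose right adjoint (restriction) preserves filtered colimits, and $K(S)$ is compact in $\Mod_S$ by hypothesis. For membership $K \in \Thick_R(k)$: applying the exact functor $R \otimes_S (-)$ to $K(S) \in \Thick_S(k)$ yields $K \in \Thick_R(R \otimes_S k) = \Thick_R(Q)$, while the cosmallness hypothesis gives $Q \in \Thick_Q(k)$, which restricts along $R \to Q$ to $Q \in \Thick_R(k)$; composing, $K \in \Thick_R(k)$. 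For $k \in \Loc_R(K)$: applying $R \otimes_S (-)$ to the inclusion $k \in \Loc_S(K(S))$ gives $Q \in \Loc_R(K)$, and the trivial inclusion $k \in \Loc_Q(Q)$ (as $Q$ generates $\Mod_Q$) restricts to $k \in \Loc_R(Q) \subseteq \Loc_R(K)$.

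For $(2) \Rightarrow (3)$, the key is the adjunction isomorphism
\[
\Hom_S(k, M) \simeq \Hom_R(R \otimes_S k, M) = \Hom_R(Q, M)
\]
for any $R$-module $M$ viewed as an $S$-module by restriction. Let $c \colon \Cell_k^R(R) \to R$ denote the canonical $k$-equivalence of $R$-modules. Since $Q \in \Thick_R(k)$ by the proxy-regularity hypothesis $(2)$, the map $\Hom_R(Q, c)$ is again an equivalence; via the displayed adjunction, $c$ is therefore a $k$-equivalence of $S$-modules. On the other hand, $\Cell_k^R(R) \in \Loc_R(k)$ restricts into $\Loc_S(k)$, so its underlying $S$-module is $k$-cellular. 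The universal property of $k$-cellularization then forces an $S$-module equivalence $\Cell_k^S(R) \simeq \Cell_k^R(R)$. I expect the main bookkeeping subtlety to lie in the chain $Q \in \Thick_Q(k) \subseteq \Thick_R(k)$ within $(1) \Rightarrow (2)$, which is precisely where the cosmallness of $Q \to k$ is used; the remainder is a careful but routine application of the relevant universal properties.
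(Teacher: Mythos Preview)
Your argument is essentially correct, but there is one slip in the $(2)\Rightarrow(3)$ step that you should repair. You assert that $Q \in \Thick_R(k)$ ``by the proxy-regularity hypothesis $(2)$''. Condition $(2)$ only tells you that $K = R\otimes_S K(S)$ lies in $\Thick_R(k)$; it says nothing directly about $Q$. (The containment $Q\in\Thick_R(k)$ would follow from the cosmallness hypothesis $(1)$, but you are no longer entitled to use that.) What \emph{does} follow from $(2)$ together with the standing proxy-regularity of $S\to k$ is the weaker containment $Q\in\Loc_R(k)$: indeed $k\in\Loc_S(K(S))$ gives $Q=R\otimes_S k\in\Loc_R(K)$, and then $K\in\Thick_R(k)$ forces $\Loc_R(K)\subseteq\Loc_R(k)$. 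This is enough for your purposes, since the class of $X$ with $\Hom_R(X,c)$ an equivalence is a localizing subcategory of $\Mod_R$ (it is thick and closed under coproducts because $\Hom_R(-,c)$ takes coproducts to products). With that correction your verification goes through.

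It is worth noting that your route for $(2)\Rightarrow(3)$ differs from the paper's. The paper first replaces $k$-cellularization by cellularization with respect to the Koszul objects on each side, $\Cell_k^R\simeq\Cell_{K}^R$ and $\Cell_k^S\simeq\Cell_{K(S)}^S$, and then invokes a lemma of Shamir identifying $\Cell_{K}^R(R)$ with $\Cell_{K(S)}^S(R)$ once one checks that $R\otimes_S K(S)$ is $K(S)$-cellular as an $S$-module. Your argument bypasses the Koszul objects entirely: you use the adjunction $\Hom_S(k,-)\simeq\Hom_R(Q,-)$ to transport the $R$-module $k$-equivalence $c$ to an $S$-module $k$-equivalence, and then appeal directly to the universal property of $\Cell_k^S$. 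This is a cleaner and more self-contained argument, at the cost of having to observe (as above) that $Q$ is $k$-cellular in $\Mod_R$. For $(1)\Rightarrow(2)$ the paper simply cites \cite[Proposition~4.18(1)]{dgi_duality}; your direct verification is essentially what that reference does.
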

\begin{proof}
That $(1) \implies (2)$ is shown in the proof of \cite[Proposition 4.18(1)]{dgi_duality}.

Assuming (2) then, we have the equivalences $\Cell_k^R(R) \simeq \Cell_{K(R)}^R(R) \simeq \Cell_{R \otimes_S K(S)}^R(R)$ and $\Cell_k^S(R) \simeq \Cell_{K(S)}^S(R)$. To show $(2) \implies (3)$ it thus suffices to prove that $\Cell^R_{R\otimes_SK(S)}(R) \simeq \Cell_{K(S)}^S(R)$ as $S$-modules. According to \cite[Lemma 3.1(2)]{shamir_cellular} this follows if $R \otimes_S K(S)$ is $K(S)$-cellular as an $S$-module. Since the category of $S$-modules is generated by $S$, we deduce that $R \otimes_S K(S) \in \Loc_{\Mod_S}(K(S))$, i.e., $R \otimes_S K(S)$ is $K(S)$-cellular as an $S$-module, as needed. 
\end{proof}
We now obtain our result for deducing that a morphism $f\colon S \to R$ is relatively Gorenstein. For this, we say that a morphism $f \colon R \to S$ is finite if $S$ is compact as an $R$-module. 

\begin{prop}\label{thm:localdualityfinite}
	Suppose that we have a finite morphism of augmented $k$-algebras $f \colon S \to R$ over $k$, and let $Q = R \otimes_S k$. Assume that the following conditions are satisfied:
	\begin{enumerate}
	 	\item $R \to k$ and $S \to k$ are orientable Gorenstein of shift $r$ and $s$ respectively.
	 \item $Q \to k$ is cosmall.
	 \item $R$ and $S$ are dc-complete.
	 \end{enumerate}  
	 Then, $f$ is relatively Gorenstein of shift $s-r$. 
\end{prop}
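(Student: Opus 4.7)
The plan is to chain the completion identity \Cref{prop:completion} applied separately to $R$ and to $S$, linking the two via the $(f^\ast, f_!)$ adjunction (with $f_! = \Hom_S(R,-)$) and the base-change lemma \Cref{lem:basechange}. The shift $s-r$ will then emerge as the difference of the two shifts coming from \Cref{prop:completion}.

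First, I would apply \Cref{prop:completion} to $R$ and use dc-completeness $\widehat{R} \simeq R$ to obtain
\[
\Hom_R(\Cell^R_k(R), \cal{I}_R) \simeq \Sigma^{-r} R.
\]
Next, \Cref{lem:coind} rewrites $\cal{I}_R$ as $\Hom_S(R, \cal{I}_S)$, and the $(f^\ast, f_!)$ adjunction transports the left-hand side into $\Mod_S$:
\[
\Hom_R(\Cell^R_k(R), \Hom_S(R, \cal{I}_S)) \simeq \Hom_S(f^\ast \Cell^R_k(R), \cal{I}_S).
\]
Condition (2) now permits the use of \Cref{lem:basechange}, replacing $f^\ast \Cell^R_k(R)$ by $\Cell^S_k(R)$ as $S$-modules.

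Since $S \to k$ is proxy-regular, $k$-cellularization is smashing on $\Mod_S$, so $\Cell^S_k(R) \simeq \Cell^S_k(S) \otimes_S R$. Feeding this into the previous equivalence and applying the tensor--hom adjunction in $\Mod_S$ gives
\[
\Hom_S(R, \Hom_S(\Cell^S_k(S), \cal{I}_S)) \simeq \Sigma^{-r} R.
\]
A second invocation of \Cref{prop:completion}, now applied to $S$ and combined with dc-completeness of $S$, identifies the inner Hom as $\Sigma^{-s} S$. Substituting yields $\Sigma^{-s} \Hom_S(R, S) \simeq \Sigma^{-r} R$, hence $\Hom_S(R, S) \simeq \Sigma^{s-r} R$, which is the desired relative Gorenstein condition.

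The main subtlety lies not in any single hard step but in the bookkeeping of module structures across the chain: one must verify that every adjunction preserves the outer $R$-action so that the final equivalence genuinely holds in $\Mod_R$ rather than just in $\Mod_S$. The key technical ingredient is \Cref{lem:basechange}, which depends essentially on the cosmall hypothesis on $Q$; once available, the two appearances of \Cref{prop:completion} do the rest, supplying the shifts $-r$ and $-s$ whose difference realizes the claimed $s - r$.
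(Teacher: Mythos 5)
Your proposal is correct and is essentially the paper's own proof: the paper runs the identical chain of equivalences (\Cref{prop:completion} with dc-completeness at both ends, the tensor--hom and $(f^*,f_!)$ adjunctions, the smashing property of cellularization, \Cref{lem:basechange}, and \Cref{lem:coind}), just written in the opposite direction, starting from $\Hom_S(R,S)$ and ending at $\Sigma^{s-r}R$ rather than starting from $\Hom_R(\Cell^R_k(R),\cal{I}_R)\simeq\Sigma^{-r}R$.
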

\begin{proof}
		The proof follows \cite[Theorem 7.3(m)]{bg_localduality}, which is actually a special case of this theorem. We have a chain of equivalences 
	\begin{align*}
	\Hom_S(R,S) &\simeq \Sigma^{s}\Hom_S(R,\Hom_S(\Cell^S_k(S),\cal{I}_S))	& [\mathrm{\Cref{prop:completion}} \text{ and } (3)] \\
		& \simeq \Sigma^{s}\Hom_S((\Cell^S_k(S)) \otimes_SR,\cal{I}_S) &\\
		& \simeq \Sigma^{s}\Hom_S(\Cell^S_k(R),\cal{I}_S) & [\mathrm{\Cref{lem:cellproxy}}]\\
		& \simeq \Sigma^{s}\Hom_S(\Cell^R_k(R),\cal{I}_S) & [\mathrm{\Cref{lem:basechange}}]\\
		& \simeq \Sigma^{s}\Hom_R(\Cell_k^R(R),\Hom_{S}(R,\cal{I}_S)) & \\
		& \simeq \Sigma^{s}\Hom_R(\Cell^R_k(R),\cal{I}_R) & [\mathrm{\Cref{lem:coind}}]\\
		& \simeq \Sigma^{s-r}R. & [\mathrm{\Cref{prop:completion}}\text{ and } (3)] 
	\end{align*}
	This is exactly the claim that $\Hom_S(R,S)$ is relatively Gorenstein of shift $s-r$. 
\end{proof}

\section{Gorenstein ascent}

In this section we describe ascent techniques which allow to construct new examples of ring spectra satisfying (local) Gorenstein duality from known examples.

\subsection{Ascent for Gorenstein rings}\label{sec:gorenstein_ascent}

Let $R\to k$ and $S\to k$ be morphisms of ring spectra. Suppose we are given a morphism $f \colon S \to R$ over $k$ with $f$ relatively Gorenstein, then $S$ is Gorenstein if and only if $R$ is Gorenstein, see \Cref{lem:gorenstein_iff}. Now let $Q = R \otimes_S k$. We consider the situation where two out of $R,S$ and $Q$ satisfy Gorenstein duality. Part (1) of the following was already shown in \cite[Section 8.6]{dgi_duality} or \cite[Lemma 19.3]{greenlees_hi}.

\begin{thm}\label{thm:gorenstein_ascent}
	Let $S \xr{f} R $ be a morphism over $k$, and let $Q = R \otimes_S k$. Suppose that the natural morphism $\nu \colon \Hom_S(k,S) \otimes_S R \to \Hom_S(k,R)$ is an equivalence of $S$-modules, and that one of the following conditions is satisfied:
	\begin{enumerate}[(i)]
		\item $S \to k$ is proxy-regular and $Q \to k$ is cosmall.
		\item $S \to k$ is small and $Q \to k$ is proxy-regular.
	\end{enumerate}
Then the following hold. 
\begin{enumerate}
\item If $Q \to k$ and $S \to k$ are Gorenstein of shift $q$ and $s$ respectively, then $R \to k$ is Gorenstein of shift $s + q$.
\item If $S \to k$ and $R \to k$ are Gorenstein of shift $s$ and $r$ respectively, then $Q \to k$ is Gorenstein of shift $r-s$.
\end{enumerate}
\end{thm}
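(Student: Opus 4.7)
The plan is to establish a single master equivalence expressing $\Hom_R(k,R)$ in terms of $\Hom_S(k,S)$ and $\Hom_Q(k,Q)$, from which both (1) and (2) will fall out by comparing shifts. The proxy-regularity that is part of the Gorenstein definition for the ``output'' morphism will be dealt with separately in each of the two cases.

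Write $g\colon R\to Q$ for the canonical morphism, so that $k$ becomes an $R$-module via $R\to Q\to k$. First I would apply the restriction/extension adjunction along $g$ to get
\[
\Hom_R(k,R) \simeq \Hom_Q\bigl(k,\Hom_R(Q,R)\bigr).
\]
Since $Q = R\otimes_S k$, the restriction/extension adjunction along $f$ identifies the inner Hom:
\[
\Hom_R(Q,R)\simeq \Hom_R(R\otimes_S k, R)\simeq \Hom_S(k,R),
\]
and the hypothesis that $\nu$ is an equivalence of $S$-modules converts the right-hand side into $\Hom_S(k,S)\otimes_S R$. Composing yields the master equivalence
\[
\Hom_R(k,R)\simeq \Hom_Q\bigl(k,\Hom_S(k,S)\otimes_S R\bigr),
\]
which I would track as an equivalence of $Q$-modules by checking module structures at each step.

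The shift computations are then immediate. Substituting $\Hom_S(k,S)\simeq \Sigma^s k$ yields $\Hom_S(k,S)\otimes_S R\simeq \Sigma^s Q$, so the master equivalence collapses to $\Hom_R(k,R)\simeq \Sigma^s\Hom_Q(k,Q)$. For (1), plugging in $\Hom_Q(k,Q)\simeq \Sigma^q k$ gives $\Hom_R(k,R)\simeq \Sigma^{s+q}k$. For (2), reading the same equivalence against $\Hom_R(k,R)\simeq \Sigma^r k$ gives $\Hom_Q(k,Q)\simeq \Sigma^{r-s}k$.

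What remains is verifying proxy-regularity of the output morphism, which is part of the Gorenstein definition. Under hypothesis (i): for (1) this is exactly \Cref{lem:basechange}, which supplies $R\otimes_S K(S)$ as a Koszul object for $R\to k$; for (2), $Q\to k$ cosmall is a stronger statement than proxy-regularity, with Koszul object $Q$ itself. Under hypothesis (ii): smallness of $S\to k$ means $k$ is compact in $\Mod_S$, so $Q=R\otimes_S k$ is compact in $\Mod_R$, and combining this with the given proxy-regularity of $Q\to k$ produces a Koszul object for $R\to k$ (compact over $R$ through $Q$, lying in the thick subcategory of $\Mod_R$ generated by $k$, and with $k$ in its localizing subcategory in $\Mod_R$). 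The main technical obstacle I anticipate is the bookkeeping of module structures in the central adjunction chain, since the identifications cross three different module categories; everything else is a direct substitution.
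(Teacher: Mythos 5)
Your proposal is correct and follows essentially the paper's route: the adjunction chain you use to obtain $\Hom_R(k,R)\simeq\Hom_Q\bigl(k,\Hom_S(k,S)\otimes_S R\bigr)$ is precisely the content of the result the paper simply cites (\cite[Proposition 8.6]{dgi_duality}, \cite[Lemma 19.3]{greenlees_hi}), and your handling of proxy-regularity of the output morphism in cases (i) and (ii) matches the paper's appeal to \cite[Proposition 4.18]{dgi_duality} (via \Cref{lem:basechange} in case (i)). The module-structure bookkeeping you flag at the end is exactly what those cited results verify, so nothing essential is missing.
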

\begin{proof}
	Either of the conditions implies that $R \to k$ is proxy-regular, see \cite[Proposition 4.18]{dgi_duality}, so that $R \to k$ is Gorenstein if and only if $\Hom_R(k,R) \simeq \Sigma^ak$. But by \cite[Proposition 8.6]{dgi_duality} or \cite[Lemma 19.3]{greenlees_hi} the assumption on $\nu$ implies that there is an equivalence of $k$-modules
	\[
\Hom_R(k,R) \simeq \Hom_Q(k,\Hom_S(k,S) \otimes_kQ).
	\]
	The result follows. 
\end{proof}
\begin{rem}\label{rem:nu}
	The natural map $\nu$ is an equivalence if (but not only if) either $R$ or $k$ are small as $S$-modules.
\end{rem}

\subsection{Ascent for local Gorenstein duality}
In \Cref{sec:gorenstein_ascent} we recalled the Gorenstein ascent theorem of Dwyer--Greenlees--Iyengar, namely that for a finite morphism $S \xr{f} R$ over $k$ if $S$ and $Q = R \otimes_S k$ are Gorenstein, then so is $R$. In \cite[Theorem 4.27]{bhv2}, which we recall now, we gave a criterion for ascent of local Gorenstein duality. Note that in the following we do not need to assume that $R$ and $S$ are $k$-algebras. 
\begin{prop}\label{prop:bhv_ascent}
Let $R$ and $S$ be ring spectra. Suppose that $S$ satisfies local Gorenstein duality of shift $s$ and that $f \colon S \to R$ is a finite morphism and is relatively Gorenstein of shift $r-s$, then $R$ satisfies local Gorenstein duality of shift $r$. 
\end{prop}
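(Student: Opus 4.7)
The plan is to verify the local Gorenstein duality equivalence $\Gamma_\fp R \simeq \Sigma^{r+d} T_R(I_\fp)$ at each $\fp \in \Spec^h(\pi_*R)$ of dimension $d$ by descending along $f$ to its contraction $\fq := f^{-1}(\fp) \in \Spec^h(\pi_*S)$. Since $f$ is finite, $\pi_*S \to \pi_*R$ is integral, so by going-up $\dim\fq = \dim\fp = d$, and the fibre of $f$ over $\fq$ is a finite set of pairwise incomparable primes of $\pi_*R$.

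The argument rests on two canonical decompositions indexed by the primes $\fp'$ lying over $\fq$. On the torsion side, the semi-local ring $\pi_*R \otimes_{\pi_*S}(\pi_*S)_\fq$ has maximal ideals exactly the extensions of the $\fp'$, and this orthogonal idempotent splitting lifts to an $R$-module equivalence
\[
R \otimes_S \Gamma_\fq S \;\simeq\; \bigoplus_{\fp' \text{ over } \fq} \Gamma_{\fp'} R.
\]
On the injective-hull side, finiteness of $f$ renders $R$ dualizable over $S$, and via Brown representability combined with the Hom-tensor adjunction one identifies $\Hom_S(R, T_S(I_\fq)) \simeq T_R(\Hom_{\pi_*S}(\pi_*R, I_\fq))$. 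A short Bass-number computation---comparing $\kappa(\fq)$- and $\kappa(\fp')$-dimensions of $\Hom_{\pi_*S}(\kappa(\fp'), I_\fq)$---shows that each indecomposable injective $I_{\fp'}$ appears in $\Hom_{\pi_*S}(\pi_*R, I_\fq)$ with multiplicity exactly one, so
\[
\Hom_S(R, T_S(I_\fq)) \;\simeq\; \bigoplus_{\fp' \text{ over } \fq} T_R(I_{\fp'}).
\]

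To combine the two decompositions, dualizability of $R$ over $S$ promotes the relative Gorenstein equivalence $\Hom_S(R, S) \simeq \Sigma^{r-s}R$ to an equivalence of functors relating $\Hom_S(R, -)$ to $R \otimes_S (-)$ by a shift controlled by $r-s$. Applying this to $T_S(I_\fq)$ and invoking the local Gorenstein duality hypothesis $\Gamma_\fq S \simeq \Sigma^{s+d} T_S(I_\fq)$ on $S$, the shifts collapse to $r+d$ and produce
\[
\bigoplus_{\fp'} \Gamma_{\fp'} R \;\simeq\; R \otimes_S \Gamma_\fq S \;\simeq\; \Sigma^{r+d}\, \bigoplus_{\fp'} T_R(I_{\fp'}).
\]
Matching the canonical $\fp$-summands on both sides---each pinned down by the prime of $\pi_*R$ on which it is supported---yields the desired $\Gamma_\fp R \simeq \Sigma^{r+d} T_R(I_\fp)$.

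The main obstacle will be establishing canonicity of the two decompositions, so that summand-by-summand comparison is legitimate rather than holding only up to a non-unique automorphism of $\bigoplus_{\fp'}(-)$. Both splittings arise from a common system of orthogonal central idempotents governing the semi-local splitting of $\pi_*R$ at $\fq$, so the individual summands are characterized by their support, and the identification goes through. A secondary bookkeeping task is making the dualizability-based translation between $\Hom_S(R,-)$ and $R \otimes_S(-)$ precise enough on non-compact inputs like $T_S(I_\fq)$ and $\Gamma_\fq S$, but this follows from the fact that the natural comparison map is an equivalence whenever $R$ is compact as an $S$-module.
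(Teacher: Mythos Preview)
The paper does not prove this proposition; it is cited as \cite[Theorem~4.27]{bhv2}, so there is no in-paper argument to compare against. Your overall architecture---reduce to a prime $\fq$ of $\pi_*S$, use compactness of $R$ over $S$ to trade $R\otimes_S(-)$ for $\Hom_S(R,-)$ via the relative Gorenstein twist, then extract the $\fp$-summand from a decomposition over the fibre---is sound, and the Bass-number computation on the injective side is correct.

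The gap is in your justification of the torsion-side decomposition. You assert that the semi-local ring $(\pi_*R)\otimes_{\pi_*S}(\pi_*S)_\fq$ carries orthogonal idempotents indexed by the $\fp'$, and that this splitting lifts to the spectrum level. But a semi-local Noetherian ring need not split as a product of its localizations at maximal ideals: with $\pi_*S=\Z$, $\pi_*R=\Z[i]$, $\fq=(5)$, the ring $\Z[i]_{(5)}$ is a semi-local \emph{domain} with two maximal ideals $(2\pm i)$ and hence no nontrivial idempotents. So the mechanism you invoke does not exist in general, and since you appeal to the same idempotents again when matching summands, the gap recurs there.

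The decomposition you want is nevertheless true, for a support-theoretic reason rather than a ring-theoretic one. Base-changing the localization and torsion functors along $f$ shows that $R\otimes_S\Gamma_\fq S$ has support exactly the finite antichain $\{\fp':\fp'\cap\pi_*S=\fq\}$. Any $R$-module $M$ supported on a finite set of pairwise incomparable primes $\{\fp_1,\dots,\fp_n\}$ satisfies $M\simeq\bigoplus_i\Gamma_{\fp_i}M$: the localizations $M\mapsto M_{\fp_i}$ are smashing, annihilate objects supported at $\fp_j$ for $j\neq i$, and the comparison map $M\to\prod_i M_{\fp_i}$ is an equivalence on generators of the relevant localizing subcategory. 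This replaces the missing idempotents and simultaneously makes summand-matching canonical---applying $\Gamma_\fp$ to both sides functorially isolates the $\fp$-piece. With this correction your argument goes through.
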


Combined with \Cref{thm:localdualityfinite} we deduce the following. 

\begin{prop}\label{cor:localdualityfinite}
	Let $S \xr{f} R$ be a finite morphism of augmented $k$-algebras and $Q = R \otimes_S k$. Assume that the following conditions are satisfied:
	\begin{enumerate}
	 	\item $R \to k$ and $S \to k$ are orientable Gorenstein of shift $r$ and $s$ respectively.
	 \item $Q \to k$ is cosmall.
	 \item $R$ and $S$ are dc-complete.
	 \end{enumerate}  
	 Then, if $S$ satisfies local Gorenstein duality of shift $s$, then $R$ satisfies local Gorenstein duality of shift $r$. 
\end{prop}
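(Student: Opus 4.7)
The plan is simply to chain together the two results stated immediately before this corollary, namely \Cref{thm:localdualityfinite} and \Cref{prop:bhv_ascent}. Hypotheses (1), (2), (3) are exactly the hypotheses of \Cref{thm:localdualityfinite}, so I would begin by invoking that proposition to conclude that the finite morphism $f \colon S \to R$ is relatively Gorenstein; the shift provided is $s-r$ (equivalently, $\Hom_S(R,S) \simeq \Sigma^{s-r} R$).

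Next, I would feed this relative Gorenstein statement into \Cref{prop:bhv_ascent}. Since $S$ is assumed to satisfy local Gorenstein duality of shift $s$, and since $f$ is a finite morphism that is relatively Gorenstein with shift adjusting $s$ to $r$, \Cref{prop:bhv_ascent} applies verbatim and yields that $R$ satisfies local Gorenstein duality of shift $r$, which is the desired conclusion.

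There is essentially no obstacle here: the real content has been done in \Cref{thm:localdualityfinite} (where the chain of Hom-identifications using cellularization, dc-completeness, and \Cref{lem:basechange} produces the relative Gorenstein equivalence) and in \Cref{prop:bhv_ascent} from the prior work \cite{bhv2}. The only thing worth double-checking in the write-up is the bookkeeping of the shifts: the relative Gorenstein shift obtained from \Cref{thm:localdualityfinite} is the one that precisely matches the input of \Cref{prop:bhv_ascent} so that the output local Gorenstein duality shift is $r$ (not $s$ or $r-s$). Thus the proof is a one-line citation of the two preceding results in sequence.
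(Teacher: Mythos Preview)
Your proposal is correct and matches the paper's approach exactly: the paper simply states this proposition as an immediate consequence of combining \Cref{thm:localdualityfinite} with \Cref{prop:bhv_ascent}, and your shift bookkeeping is right.
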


\subsection{Cochain algebras}

In this section we specialize the results of the previous subsection to ring spectra obtained as cochains on spaces. We recall that we write $C^*(X;k)$ for the ring spectrum of $k$-valued cochains on $X$ defined as the function spectrum $\Hom_{\Sp}(\Sigma_+^{\infty}X,Hk)$. In particular, there is an isomorphism $\pi_*C^*(X;k) \cong H^{-*}(X;k)$. 

It is important to have in mind that the object we are interested in is $R=C^*(X;k)$, not $X$ itself, which means that we can have different spaces giving rise to the same ring spectrum $R$. For example, $R=C^*(B\mathbb Z/p; \F_q) \simeq C^*(*;\F_q)$ if $p$ and $q$ are coprime.

If $k$ is a field and $X$ is a space, then we denote the Bousfield $k$-completion of $X$ by $X^{\wedge}_k$. If $X$ is $k$-good, then $C^*(X;k)\simeq C^*(X^\wedge_k;k)$ and in this case we can assume that $X$ is $k$-complete. For example, if $\pi_1X$ is finite, then $X$ is $k$-good for $k=\mathbb Q$ and $k=\F_p$ for any prime $p$, and therefore $X^\wedge_k$ is $k$-complete (see \cite[I.5.2, VII.3.2, VII.5.1]{bousfield_homotopy_1972}).

Given a space $X$, the ring spectrum of cochains will be well behaved when it satisfies certain hypothesis which we will assume mostly through the rest of the paper:
\begin{defn}\cite[Section 4.22]{dgi_duality}\label{def:emtype}
A space $X$ is said to be of Eilenberg--Moore type (EM-type) if $X$ is connected, $H^*(X;k)$ is of finite type, and
	\begin{enumerate}
		\item $X$ is simply connected when $k=\Q$, or
		\item $k$ is of characteristic $p$ and $\pi_1X$ is a finite $p$-group. 
	\end{enumerate}
\end{defn}

\begin{rem}
A space of EM-type is $k$-good and so we can assume always that $X$ is $k$-complete when considering its ring spectrum of cochains with coefficients in $k$.
\end{rem}

We are interested in these properties for the following reason. Suppose we are given a homotopy pullback square of spaces
\[
\begin{tikzcd}
	Y \times_X Z \arrow{r} \arrow{d} & Z \arrow{d} \\
	Y \arrow{r} & X.
\end{tikzcd}
\]
If $X$ is of EM-type, then the homotopy pullback gives rise to an equivalence \cite[Corollary 1.1.10]{dagxiii}
\[
C^*(Y \times_X Z;k) \simeq C^*(Y;k) \otimes_{C^*(X;k)} C^*(Z;k). 
\]
In particular, we obtain:
\begin{lem}\label{lem:emss}
	Let $F \to Y \to X$ be a fiber sequence of spaces where $X$ is of EM-type. Then
	\[
C^*(F;k) \simeq C^*(Y;k) \otimes_{C^*(X;k)} k.
	\]
\end{lem}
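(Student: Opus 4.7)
The plan is to realize the fiber sequence $F \to Y \to X$ as a homotopy pullback square and then directly invoke the Eilenberg--Moore equivalence for cochains discussed immediately before the statement. Concretely, I would form the diagram
\[
\begin{tikzcd}
F \arrow{r} \arrow{d} & Y \arrow{d} \\
\ast \arrow{r} & X,
\end{tikzcd}
\]
which is a homotopy pullback by the very definition of the fiber $F$ (choosing a basepoint of $X$ along which $F = Y \times_X \ast$).

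Next, since $X$ is of EM-type, I can apply the Eilenberg--Moore equivalence \cite[Corollary 1.1.10]{dagxiii} recalled in the paragraph preceding the lemma. Applied to the pullback above it yields
\[
C^*(F;k) \simeq C^*(Y;k) \otimes_{C^*(X;k)} C^*(\ast;k).
\]

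Finally, I would identify $C^*(\ast;k) \simeq k$ from the definition $C^*(-;k) = \Hom_{\Sp}(\Sigma^\infty_+(-), Hk)$, and substitute into the previous display to obtain the claimed equivalence. There is no real obstacle here: the content of the lemma is entirely packaged in the cited Eilenberg--Moore result, and the only thing to verify is that the EM-type hypothesis on $X$ is exactly what is needed to apply it — which is already noted in the sentence just before the lemma statement.
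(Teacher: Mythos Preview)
Your proposal is correct and is exactly the argument the paper intends: the lemma is stated immediately after the Eilenberg--Moore equivalence with the phrase ``In particular, we obtain,'' and no separate proof is given. Specializing the pullback square to $Z = \ast$ and using $C^*(\ast;k) \simeq k$ is precisely the intended deduction.
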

Under some hypothesis on $X$, a morphism $C^*(X;k) \to k$ that is Gorenstein is also automatically orientable.
	\begin{lem}\label{lem:orientable}
		Let $X$ be a connected space such that $H^*(X;k)$ is of finite type. Suppose that:
		\begin{enumerate}
		 	\item $X$ is simply connected with $k = \Q$, or
      \item $k$ is field of characteristic $p$, and $\pi_1X$ is a finite $p$-group. 
		 	\item $k$ is a finite field of characteristic $p$ and $\pi_1X$ is a pro-$p$ group. 
		 \end{enumerate}
		 Then, if $C^*(X;k) \to k$ is Gorenstein, it is orientable. In particular, the conditions of the lemma hold if $X$ is a space of EM-type.
	\end{lem}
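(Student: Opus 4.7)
The plan is to reduce orientability to the statement that every (continuous) character $\pi_1 X \to k^\times$ is trivial, and then to verify this in each of the three cases. Writing $R = C^*(X;k)$ and $\cE = \Hom_R(k,k)$, the Gorenstein condition combined with the adjunction in \eqref{eq:orientable} exhibits both $\Hom_R(k,R)$ and $\Sigma^a \Hom_R(k, \Hom_k(R,k))$ as right $\cE$-modules whose underlying spectrum is $\Sigma^a k$. The first step is to argue that any right $\cE$-module with underlying spectrum $\Sigma^a k$ is determined up to $\cE$-equivalence by the $\pi_0 \cE$-representation on its unique nonzero homotopy group $\pi_a = k$. This is a formality statement: since $\cE$ is connective (being Koszul dual to the coconnective algebra $R$) and $\Sigma^a k$ has homotopy concentrated in a single degree, any $E_1$-algebra map $\cE \to \End_k(\Sigma^a k) \simeq k$ factors through the truncation $\cE \to \pi_0 \cE$, and two such factorizations produce equivalent $\cE$-modules iff the resulting one-dimensional $\pi_0 \cE$-representations are isomorphic.

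It follows that the two right $\cE$-module structures on $\Sigma^a k$ are classified by ring maps $\epsilon_1, \epsilon_2 \colon \pi_0 \cE \to k$, with the two modules being $\cE$-equivalent precisely when $\epsilon_1 = \epsilon_2$. The second step is to invoke the EM-type hypothesis and Eilenberg--Moore style Koszul duality to identify $\cE \simeq C_*(\Omega X; k)$, so that $\pi_0 \cE \cong k[\pi_1 X]$ (or its profinite completion $k\llbracket \pi_1 X\rrbracket$ when $\pi_1 X$ is merely pro-$p$). Under this identification each $\epsilon_i$ corresponds to a continuous character $\chi_i \colon \pi_1 X \to k^\times$, so the conclusion $\epsilon_1 = \epsilon_2$ will follow once every such character is shown to be trivial.

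The third step is then an elementary case analysis. In case (1), $\pi_1 X = 1$ and there is nothing to prove. In case (2), $\pi_1 X$ is a finite $p$-group and $\mathrm{char}(k) = p$; since $k^\times$ contains no nontrivial element of $p$-power order---if $x \in k^\times$ satisfies $x^{p^n}=1$, then $(x-1)^{p^n}=0$ in characteristic $p$, forcing $x=1$---every homomorphism from a finite $p$-group to $k^\times$ is trivial. In case (3), $k$ is a finite field of characteristic $p$, hence $k^\times$ is a finite cyclic group of order coprime to $p$, and every continuous homomorphism from a pro-$p$ group into such a group is trivial.

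The main obstacle will be the first step: making precise (and verifying) the rigidity claim that an $\cE$-module whose underlying spectrum has homotopy in a single degree is determined up to equivalence by the induced $\pi_0 \cE$-representation on that group, together with the handling of completion technicalities needed to treat case (3) uniformly with (1) and (2). Once that formality statement and the Koszul-dual identification of $\pi_0 \cE$ as (a completion of) $k[\pi_1 X]$ are in place, the remaining case check is routine group theory.
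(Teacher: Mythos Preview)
Your proposal is correct and follows essentially the same route as the paper: identify $\cE \simeq C_*(\Omega X;k)$ via Eilenberg--Moore, observe that any $\cE$-module structure on $\Sigma^a k$ factors through $\pi_0\cE \cong k[\pi_1 X]$ (your ``rigidity'' step, which the paper compresses into a single clause), and then run the same case analysis on characters $\pi_1 X \to k^\times$. The only cosmetic difference is ordering---the paper establishes the Eilenberg--Moore identification first and reads off connectivity of $\cE$ from it, which cleanly resolves the obstacle you flagged in your first step.
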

\begin{proof}
	We first check that  $\cal{E}=\Hom_R(k,k)\simeq C_*(\Omega X;k)$. If $X$ is simply connected with $k = \Q$ then it follows from the strong convergence of the Eilenberg--Moore spectral sequence. Otherwise,  the action of $\pi_1X$ on $H^*(X;k)$ is nilpotent: if $k$ is a finite field of characteristic $p$, the action factors then through a finite quotient which is a $p$-group since $H^*(X;k)$ is of finite type. Then again the  strong convergence of the Eilenberg--Moore spectral sequence shows that $\cal{E}=\Hom_R(k,k)\simeq C_*(\Omega X;k)$. 
	
	We show that $\cal{E}$ has a unique action on $k$. This action factors through $\pi_0\cal{E}\cong \pi_1X$ since $k$ is an Eilenberg--MacLane spectrum.  The case where $k$ is of characteristic $p$ and $\pi_1X$ is a finite $p$-group is \cite[Lemma 18.2]{greenlees_hi}, and follows because $\cal{E}$ is a $k$-algebra, and acts through $\pi_0(\cal{E}) \cong H_0(\Omega X) \cong k[\pi_1X]$. Because $\pi_1X$ is a finite $p$-group, and $k$ has characteristic $p$, this action must be unique. 

   If, $\pi_1X$ is a pro-$p$ group and $k$ is finite, then the action map factors through a finite quotient of $\pi_1X$, which is a finite $p$-group, and hence the result also follows in this case. In the rational case, since $X$ is simply connected, the same argument as \cite[Lemma 18.2]{greenlees_hi} shows that $k$ has a unique $\cal{E}$-module structure, and hence is orientably Gorenstein. 
\end{proof}

In the previous subsection we identified conditions to ascend local Gorenstein duality along a finite morphism. In light of \Cref{rem:nu}, we are interested in conditions which ensure that the induced morphism on cochains for a map of spaces $f \colon Y \to X$ is finite. To this end, we have the following, due to Greenlees--Hess--Shamir \cite{GreenleesHessShamir2013Complete} in the rational case, and Benson--Greenlees--Shamir \cite{shamir_pcochains} in the characteristic $p$ case. 

\begin{lem}\label{lem:spacesfinit}
	Let $f \colon Y \to X$ be a map of spaces with homotopy fiber $F$ and $H^*(F;k)$ finite dimensional. If $k = \Q$, then $f \colon C^*(X;k) \to C^*(Y;k)$ is finite. If $k$ has characteristic $p$, then the same conclusion holds if additionally $X$ and $Y$ are $p$-complete spaces with fundamental groups finite $p$-groups. 
\end{lem}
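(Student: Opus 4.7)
The plan is to combine the Eilenberg--Moore base-change equivalence with a Nakayama-type descent principle for compactness of modules over augmented coconnective ring spectra. We must verify that $C^*(Y;k)$ lies in the thick subcategory of $\Mod_{C^*(X;k)}$ generated by $C^*(X;k)$.

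First I would record the base-change input. Under our hypotheses $X$ falls into the range where the Eilenberg--Moore equivalence applies, so \Cref{lem:emss} applied to the pullback $F \simeq \ast \times_X Y$ yields
\[
C^*(Y;k) \otimes_{C^*(X;k)} k \simeq C^*(F;k).
\]
The assumption that $H^*(F;k)$ is finite-dimensional forces $C^*(F;k)$ to be compact as an $Hk$-module.

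The crux is now a Nakayama-type statement: for a suitably complete augmented coconnective $k$-algebra $R$ with $\pi_0R \cong k$, an $R$-module $M$ is compact provided $M\otimes_R k$ is compact over $k$. The idea is to lift a finite $k$-basis of $\pi_*(M\otimes_R k)$ to a finite set of free-cell generators for $M$, using coconnectivity of $R$ to organize the induction in a single direction and completeness of $R$ at its augmentation ideal to ensure that the lifting terminates. In the rational case this is carried out by Greenlees--Hess--Shamir via Sullivan minimal models of fibrations, where simple connectivity permits working with a minimal cofibrant replacement; in the characteristic $p$ case, Benson--Greenlees--Shamir show that the $p$-completeness of both $X$ and $Y$ together with the finite $p$-group hypothesis on $\pi_1$ supply exactly the completeness/convergence needed for the analogous cell-lifting argument. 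Applying this principle with $R = C^*(X;k)$ and $M = C^*(Y;k)$ delivers the conclusion.

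The principal obstacle is the descent step itself: without the stated completeness and fundamental-group hypotheses the cell-lifting procedure need not terminate in finitely many steps, and finite-dimensionality of $C^*(Y;k)\otimes_{C^*(X;k)} k$ would fail to force compactness of $C^*(Y;k)$ over $C^*(X;k)$. It is precisely to guarantee this termination that the $p$-completeness of $Y$, and not merely of $X$, and the finiteness of $\pi_1$ enter the hypotheses in the modular case.
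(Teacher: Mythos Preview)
Your proposal is correct and aligned with the paper: the paper's own proof consists solely of citing \cite[Lemma~4.7]{GreenleesHessShamir2013Complete} for $k=\Q$ and \cite[Lemma~3.4]{shamir_pcochains} for characteristic $p$, and your sketch accurately summarizes the content of those referenced arguments (Eilenberg--Moore base change followed by a Nakayama-type descent of compactness, implemented via Sullivan models rationally and via the convergence/completeness machinery of Benson--Greenlees--Shamir modularly). One small caution: you invoke \Cref{lem:emss}, which in the rational case requires $X$ simply connected, a hypothesis not explicitly stated in the present lemma; this is implicit in the cited source and in how the lemma is used later in the paper, but strictly speaking the lemma as stated here is slightly underspecified in that regard.
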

\begin{proof}
	The rational case is proved in \cite[Lemma 4.7]{GreenleesHessShamir2013Complete}, while the characteristic $p$ case is \cite[Lemma 3.4]{shamir_pcochains}. 
\end{proof}
	
We now present a cochain version of \Cref{thm:gorenstein_ascent}.
	
\begin{thm}\label{thm:ascent_spaces}
	Suppose that $g \colon Y \to X$ is a morphism of spaces of EM-type ($p$-complete if the characteristic of $k$ is $p$) with fiber $F$, such that $H^*(F;k)$ is finite-dimensional, and that $C^*(X;k) \to k$ is proxy-regular. Then the following hold: 
\begin{enumerate}
\item If $C^*(F;k) \to k$ and $C^*(X;k) \to k$ are Gorenstein of shift $q$ and $s$ respectively, then $C^*(Y;k) \to k$ is Gorenstein of shift $s + q$. 
\item If $C^*(Y;k) \to k$ and $C^*(X;k) \to k$ are Gorenstein of shift $r$ and $s$ respectively, then $C^*(F;k) \to k$ is Gorenstein of shift $r-s$.
\end{enumerate}
\end{thm}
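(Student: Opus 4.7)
The plan is to reduce both statements to a direct application of the general Gorenstein ascent result, Theorem \ref{thm:gorenstein_ascent}, applied to $S = C^*(X;k)$, $R = C^*(Y;k)$, and the induced morphism $f = g^* \colon S \to R$. First I would identify the derived pushout $Q = R \otimes_S k$. Because $X$ is of EM-type, the fiber sequence $F \to Y \to X$ gives via Lemma \ref{lem:emss} a natural equivalence
\[
Q = C^*(Y;k) \otimes_{C^*(X;k)} k \simeq C^*(F;k).
\]
Hence identifying $Q$ with the cochains on the fiber converts the hypotheses and conclusions of Theorem \ref{thm:gorenstein_ascent} directly into those of the statement to be proved, with the shifts matching: $s+q$ in case (1) and $r-s$ in case (2).

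Next I would verify the two hypotheses needed to invoke Theorem \ref{thm:gorenstein_ascent}. For the equivalence of the natural map $\nu \colon \Hom_S(k,S) \otimes_S R \to \Hom_S(k,R)$, Remark \ref{rem:nu} reduces this to showing that $R$ is compact as an $S$-module. This is exactly the content of Lemma \ref{lem:spacesfinit}, whose hypotheses are met: $H^*(F;k)$ is finite-dimensional by assumption, and in characteristic $p$ the EM-type and $p$-completeness assumptions on $X$ and $Y$ supply precisely the fundamental-group and completeness conditions required. For condition (i) of Theorem \ref{thm:gorenstein_ascent}, proxy-regularity of $S \to k$ is part of the hypothesis, so only cosmallness of $Q \to k$ remains. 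Since $H^*(F;k)$ is finite-dimensional, the same argument invoked in Remark \ref{rem:gorenstein_not_duality} (namely \cite[Section 5.5(1)]{dgi_duality}) shows that $C^*(F;k) \to k$ is cosmall.

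With all hypotheses verified, parts (1) and (2) of Theorem \ref{thm:gorenstein_ascent} immediately yield the corresponding parts of the present theorem. The main obstacle I anticipate is the verification of cosmallness of $Q \to k$: although morally clear from $H^*(F;k)$ being finite-dimensional, one must confirm that the finite-dimensionality of the cohomology of $F$ is sufficient to place $C^*(F;k)$ in the thick subcategory generated by $k$ over itself, without any additional EM-type hypothesis on $F$. A secondary point to be careful about is that Lemma \ref{lem:spacesfinit} in characteristic $p$ requires $p$-completeness of $X$ and $Y$ with finite $p$-group fundamental groups, which is guaranteed by the EM-type ($p$-complete) hypothesis on both spaces, even though $F$ need not itself be of EM-type.
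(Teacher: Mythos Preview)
Your proposal is correct and follows essentially the same route as the paper: identify $Q \simeq C^*(F;k)$ via \Cref{lem:emss}, use \Cref{lem:spacesfinit} to get finiteness of $g^*$ so that $\nu$ is an equivalence, check cosmallness of $C^*(F;k) \to k$, and then invoke \Cref{thm:gorenstein_ascent}(i). The one small correction is your citation for cosmallness: \cite[Section 5.5(1)]{dgi_duality} concerns finite CW-complexes, whereas what you need here (and what the paper uses) is \cite[Section 5.5(2)]{dgi_duality}, which shows directly that finite-dimensionality of $H^*(F;k)$ suffices for $C^*(F;k) \to k$ to be cosmall, resolving the obstacle you flagged without any EM-type hypothesis on $F$.
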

\begin{proof}
	By \Cref{lem:emss,lem:spacesfinit} there is a finite morphism $C^*(X;k) \xr{f} C^*(Y;k)$ and $C^*(F;k) \simeq C^*(Y;k) \otimes_{C^*(X;k)}k$. Moreover, $H^*(F)$ finite-dimensional implies that $C^*(F;k) \to k$ is cosmall \cite[Section 5.5(2)]{dgi_duality}. We are thus in the situation of \Cref{thm:gorenstein_ascent}(i).
\end{proof}

\begin{rem}
It worth pointing out that $C^*(Y;k)$ is Gorenstein if, for example, $C^*(F;k)$ is a Poincar\'e duality algebra by \Cref{thm:ascent_spaces} and \Cref{cor:Gorenteincosmall}.
\end{rem}

We now specialize the results on relative Gorenstein duality and local Gorenstein duality to cochain algebras. 
\begin{thm}\label{thm:relgorensteinspaces}
		Let $g \colon Y \to X$ be a morphism of spaces of EM-type ($p$-complete if the characteristic of $k$ is $p$) with fiber $F$.  Suppose that $H^*(F;k)$ is finite-dimensional, and that $C^*(X;k)$ is Gorenstein of shift $s$.
\begin{enumerate}
	\item If $C^*(Y;k)$ is Gorenstein of shift $r$, then $g^* \colon C^*(X;k) \to C^*(Y;k)$ is relatively Gorenstein of shift $s-r$.
	\item If, in addition, $C^*(X;k)$ satisfies local Gorenstein duality of shift $s$, then $C^*(Y;k)$ satisfies local Gorenstein duality of shift $r$. 
\end{enumerate}
\end{thm}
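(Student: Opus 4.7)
The plan is to reduce this theorem to a direct application of the ascent results \Cref{thm:localdualityfinite} (for part (1)) and \Cref{cor:localdualityfinite} (for part (2)), applied to $S = C^*(X;k)$, $R = C^*(Y;k)$, and the induced map $f = g^* \colon S \to R$. Since $X$ is of EM-type, \Cref{lem:emss} identifies $Q := R \otimes_S k$ with $C^*(F;k)$, and \Cref{lem:spacesfinit} together with the finite-dimensionality of $H^*(F;k)$ (and the $p$-completeness / rationality built into the EM-type hypothesis) shows that $f$ is a finite morphism of augmented $k$-algebras.

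It then remains to verify the three numbered hypotheses of \Cref{thm:localdualityfinite}. The orientability of the Gorenstein morphisms $R \to k$ and $S \to k$ is automatic since $X$ and $Y$ are of EM-type, by \Cref{lem:orientable}. The map $Q \to k$ is cosmall because $H^*(F;k)$ is finite-dimensional, by \cite[Section 5.5(2)]{dgi_duality}. Both $R$ and $S$ are dc-complete: since EM-type spaces are $k$-good we may assume $X$ and $Y$ are $k$-complete, and then the EM-type hypothesis ensures strong convergence of the relevant Eilenberg--Moore spectral sequence, identifying $\cal{E} = \Hom_{C^*(X;k)}(k,k)$ with $C_*(\Omega X;k)$ and, in turn, identifying $C^*(X;k)$ with the double centralizer $\End_{\cal{E}}(k)$, and analogously for $Y$.

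With all hypotheses in place, part (1) is immediate from \Cref{thm:localdualityfinite}, which yields that $g^*$ is relatively Gorenstein of shift $s-r$. Part (2) follows directly from \Cref{cor:localdualityfinite} applied to the same datum, using the additional hypothesis that $C^*(X;k)$ satisfies local Gorenstein duality of shift $s$. The only subtle point I expect is the verification of dc-completeness, but this is essentially a reformulation of the EM-type hypothesis in the Dwyer--Greenlees--Iyengar framework, so no genuinely new input is required beyond the ascent machinery already set up in the preceding sections.
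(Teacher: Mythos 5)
Your proposal is correct and follows essentially the same route as the paper: identify $Q \simeq C^*(F;k)$ via the Eilenberg--Moore equivalence, get finiteness of $g^*$ from \Cref{lem:spacesfinit}, verify orientable Gorenstein via \Cref{lem:orientable}, cosmallness of $C^*(F;k)\to k$ from finite-dimensionality, and dc-completeness for EM-type spaces, then apply \Cref{thm:localdualityfinite} and \Cref{cor:localdualityfinite}. The only cosmetic difference is that the paper simply cites \cite[Section 7.B]{greenlees_hi} for dc-completeness, where you sketch the underlying double-centralizer/Eilenberg--Moore argument.
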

\begin{proof}
	The Eilenberg--Moore condition shows that $C^*(F;k) \simeq C^*(Y;k) \otimes_{C^*(X;k)}k$. We therefore must verify the three conditions given in \Cref{thm:localdualityfinite} and \Cref{cor:localdualityfinite}, applied to morphisms $C^*(X;k) \xr{f} C^*(Y;k)$ and $C^*(F;k) \to k$.  We note that by assumption on $H^*(F;k)$, the morphism $f$ is finite by \Cref{lem:spacesfinit}. 
\begin{enumerate}
	\item  By assumption $C^*(X;k)$ is Gorenstein of shift $s$. Further, $C^*(Y;k)$ is Gorenstein of shift $r$, either by assumption, or by \Cref{thm:ascent_spaces} in the case that $C^*(F;k)$ is a Poincar\'e duality algebra. Since $X$ and $Y$ are assumed to be of EM-type, they are automatically orientable Gorenstein by \Cref{lem:orientable}.
	\item The assumption that $H^*(F;k)$ is finite-dimensional implies that $C^*(F;k) \to k$ is cosmall \cite[Section 5.5(2)]{dgi_duality}.
	\item Since $X$ and $Y$ are assumed to be of EM-type, they are automatically dc-complete \cite[Section 7.B]{greenlees_hi}. 
\end{enumerate}
Now we apply \Cref{thm:localdualityfinite} to see that $f$ is relatively Gorenstein of shift $s-r$, as claimed. If, in addition, $C^*(X;k)$ satisfies local Gorenstein duality of shift $s$, then \Cref{cor:localdualityfinite} implies that $C^*(Y;k)$ satisfies local Gorenstein duality of shift $r$. 
\end{proof}

\section{Examples}

This section is devoted to relevant examples of type $C^*(X;k)$ coming from $H$-spaces, finite loop spaces, Lie groups, and Noetherian rational spaces.

\subsection{Spaces with Gorenstein cohomology ring}
A first source of examples is given by algebraically Gorenstein ring spectra $R$. When $R=C^*(X;k)$, this means that $H^*(X;k)$ is a Gorenstein ring, see \Cref{defn:alg_gorenstein}. As recalled in \Cref{prop:4.7bhv2}, algebraically Gorenstein implies local Gorenstein duality, and so, by virtue of \Cref{lem:cellequiv}, Gorenstein duality holds. Secondly, if $R$ is proxy-regular then we also deduce that $C^*(X;k)\rightarrow k$ is Gorenstein by applying $\Hom_R(k,-)$ to the Gorenstein duality, see the proof of \Cref{cor:Gorenteincosmall}.  

Examples of Gorenstein rings are given by finite duality algebras (which are Gorenstein of Krull dimension zero) and polynomial algebras.

\begin{ex}
Let $M$ be an orientable compact manifold, then $M$ is homotopy equivalent to a finite $CW$-complex, so $C^*(M;k) \to k$ is cosmall \cite[Section 5.5]{dgi_duality} and hence proxy-regular. Therefore $C^*(M;k)\rightarrow k$ is Gorenstein and satisfies Gorenstein local duality. 
\end{ex}

Other examples come from the theory of $H$-spaces. 

\begin{ex}\label{ex:hspace}
If $X$ is an $H$-space with finite cohomology $H^*(X;k)$, then by the classification  \cite{hopf_uber_1941,mm65} of finite-dimensional Hopf algebras over a perfect field $k$, $H^*(X;k)$ is a Poincar\'e duality algebra. In particular this includes mod $p$ finite loop spaces when $k=\F_p$, that is, loop spaces with finite mod $p$ cohomology. 
\end{ex}

Another source of examples is given by spaces with polynomial algebra. If $X$ is $k$-good with $H^*(X;k)$ polynomial then it is also algebraically Gorenstein. 

\begin{ex}\label{ex:U(n)}
The group cohomology of $U(n)$ is given by $H^*(BU(n);k) \cong k[c_1,\ldots,c_n]$, where $c_i$ is the $i$-th Chern class, with degree $2i$, and $k=\F_p,\Q$. In particular, the ring is regular, hence Gorenstein, and therefore satisfies local Gorenstein duality  of shift $\dim(U(n))$, see \Cref{prop:4.7bhv2}.
\end{ex}

A less obvious situation is when $X$ is a connected $H$-space with Noetherian mod $p$ cohomology, which combines the two previous basic examples.

\begin{prop}
Let $X$ be a connected $H$-space with Noetherian cohomology, then $H^*(X;\F_p)$ is Gorenstein. In particular, $C^*(X;\F_p)$ is algebraically Gorenstein, and therefore local Gorenstein duality holds.
\end{prop}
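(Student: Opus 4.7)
The assertion is a purely algebraic claim about the graded ring $A := H^*(X;\F_p)$, so my plan is to argue at the level of Hopf algebras and then cite the bridge \Cref{prop:4.7bhv2}. Since $X$ is a connected $H$-space with $A$ Noetherian, $A$ is a connected Noetherian commutative graded Hopf algebra over the perfect field $\F_p$. The heart of the strategy will be to produce a \emph{Noether normalization by a sub-Hopf algebra} $P \hookrightarrow A$, where $P$ is polynomial on finitely many homogeneous generators, $A$ is a finitely generated free $P$-module, and the quotient $F := A \otimes_P \F_p$ is a finite-dimensional graded Hopf algebra. This is a Hopf-algebraic analog of classical Noether normalization that can be extracted from the work of Borel and Milnor--Moore on connected Noetherian Hopf algebras: for odd $p$ one takes $P$ generated by the even-degree indecomposables of infinite height, while for $p=2$ one replaces these by sufficiently high Frobenius powers, using that $\F_p$ is perfect.

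Once such a decomposition is in hand, I will identify the two end pieces as Gorenstein and invoke transitivity. The fiber $F$ is a finite-dimensional graded commutative Hopf algebra over the perfect field $\F_p$, hence is a Poincar\'e duality algebra by the Hopf--Borel classification, which is precisely the algebraic content of \Cref{ex:hspace}; in particular $F$ is Gorenstein of Krull dimension zero. Meanwhile $P$ is a graded polynomial algebra, so regular and in particular Gorenstein as in \Cref{ex:U(n)}. I then apply the classical transitivity statement: if $P \to A$ is a finite flat homomorphism of graded local Noetherian rings and both $P$ and the closed fiber $A \otimes_P \F_p$ are Gorenstein, then so is $A$, with shift equal to the sum of the two shifts. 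This establishes that $A = H^*(X;\F_p)$ is a graded Gorenstein ring. The ``in particular'' clause is then immediate: $C^*(X;\F_p)$ is algebraically Gorenstein in the sense of \Cref{defn:alg_gorenstein}, and so satisfies local Gorenstein duality by \Cref{prop:4.7bhv2}.

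The main obstacle in this plan is the first step, i.e., setting up the Noether normalization inside the category of Hopf algebras. The classical formulations are cleanest for $p$ odd, while at $p=2$ one must argue carefully about the interaction between squaring and the coproduct before one can realize the polynomial subring as a sub-Hopf algebra over which $A$ is free. An alternative that bypasses these subtleties is to invoke the general theorem that every Noetherian commutative Hopf algebra over a field is AS-Gorenstein (Brown--Goodearl and collaborators) and observe that in our graded connected commutative setting this specializes to the usual Gorenstein condition; this shortens the argument at the cost of replacing a concrete structural input by a larger black box.
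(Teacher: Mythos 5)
Your proof is correct in outline and, at its core, runs parallel to the paper's: both arguments exhibit $H^*(X;\F_p)$ as a finite free extension of a polynomial subalgebra with finite-dimensional Poincar\'e duality fiber, and then conclude that the total ring is Gorenstein; the ``in particular'' clause via \Cref{defn:alg_gorenstein} and \Cref{prop:4.7bhv2} is handled identically. The difference is in the structural input and the form of the ascent statement. The paper quotes the explicit presentations of Broto--Crespo ($p=2$) and Crespo ($p$ odd) for connected $H$-spaces with Noetherian mod $p$ cohomology, reads off that these rings are Cohen--Macaulay, and applies the Meyer--Smith criterion (quotient by a system of parameters is a Poincar\'e duality algebra, hence Gorenstein); you instead appeal to general Hopf-algebra structure theory plus flat local ascent of Gorensteinness, which is the same commutative-algebra fact in a different guise. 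Two remarks on your first step, which you rightly flag as the crux. First, you do not need $P$ to be a sub-Hopf algebra nor the fiber to be a quotient Hopf algebra: the statement you actually use is exactly the Borel/Milnor--Moore theorem that a connected graded-commutative Hopf algebra of finite type over a perfect field is, as an algebra, a tensor product of monogenic Hopf algebras, and Noetherianity forces finitely many factors; taking $P$ to be the polynomial factors makes $A$ finite free over $P$ with fiber a tensor product of exterior and truncated polynomial algebras, which is a Poincar\'e duality algebra factor by factor with no Hopf structure on the fiber required. This bypasses the delicate sub-Hopf normalization entirely. Second, the proposed shortcut via Brown--Goodearl concerns strictly commutative (ungraded) Hopf algebras, i.e.\ affine group schemes; at odd primes $H^*(X;\F_p)$ is only graded-commutative (odd classes square to zero), so that black box does not apply directly, though at $p=2$ it does. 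What your route buys is independence from the $H$-space-specific classification results, using only the Hopf algebra structure; what the paper's route buys is an explicit presentation from which Cohen--Macaulayness and the Poincar\'e duality quotient are visible at a glance.
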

\begin{proof}
The mod $2$ cohomology is given by
\[
H^*(X;\F_2) \cong \F_2[x_1,\ldots,x_r] \otimes \F_2[y_1,\ldots,y_s]/(y_1^{2^{a_1}},\ldots,y_s^{2^{a_s}}) \]
see, e.g., \cite[Equation (5)]{BrotoCrespo1999spaces}, while for $p$ odd we have
\[
H^*(X;\F_p) \cong \frac{\F_p[y_1,\ldots,y_s]}{(y_1^2,\ldots,y_s^2)} \otimes \F_p[\beta y_1,\ldots, \beta y_k,x_{k+1},\ldots,x_n] \otimes \frac{\F_p[z_1,\ldots,z_t]}{(z_1^{p^{a_1}},\ldots,z_t^{p^{a_t}})},
\]
where $\beta$ is the Bockstein, see \cite[Corollary 2.7]{Crespo}. These are clearly Cohen--Macaulay rings, and are Gorenstein because the quotient by $(x_1,\ldots,x_r)$ for $p =2$ or $(\beta y_1,\ldots,\beta y_k,x_{k+1},\ldots,x_n)$ for $p$ odd is a Poincar\'e duality algebra, see Proposition I.1.4 and the remark on the same page of \cite{MeyerSmith2005Poincare}. 
\end{proof}

\subsection{Classifying spaces of $p$-compact groups}
We consider the $p$-compact groups of Dwyer--Wilkerson \cite{dwyerwilkerson_finloop}. As such, we fix $k = \F_p$, and for a space $X$ we write $H^*(X)$ for $H^*(X;\F_p)$ and similarly for $C^*(BX)$. 

We recall that a $p$-compact group is a loop space $X \simeq \Omega BX$ such that $X$ is $\F_p$-finite and $BX$ is a pointed connected $\F_p$-complete space. A homomorphism $i \colon Y \to X$ of $p$-compact groups is a pointed map $Bi \colon BY \to BX$. Finally, if the homotopy fiber $X/Y$ of $Bi$ is $\F_p$-finite, then we say that $i$ is a monomorphism and that $Y \le_i X$ is a subgroup of $X$.
\begin{ex}\label{ex:liepcompact}
	If $G$ is a compact Lie group with $\pi_0G$ a finite $p$-group, then $G^{\wedge}_p$ is a $p$-compact group, using \cite[Lemma 2.1]{dwyerwilkerson_finloop}.
\end{ex}
\begin{lem}\label{lem:emtypepcompact}
	Let $X$ be a $p$-compact group. Then $BX$ is of EM-type and $\pi_1X$ is an abelian pro-$p$ group. 
\end{lem}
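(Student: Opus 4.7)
The plan is to verify each of the three conditions of \Cref{def:emtype} for the space $BX$ and then to deduce the structure of $\pi_1 X$ using the loop space equivalence $X \simeq \Omega BX$ together with the $\F_p$-completeness of $BX$.

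First I would check that $BX$ is of EM-type. Connectedness is built into the definition of a $p$-compact group. The finite-type condition on $H^*(BX;\F_p)$ follows from the fundamental Noetherianity theorem of Dwyer--Wilkerson \cite{dwyerwilkerson_finloop}, since a Noetherian graded $\F_p$-algebra is automatically of finite type. To see that $\pi_1 BX \cong \pi_0 X$ is a finite $p$-group, finiteness follows because $X$ is $\F_p$-finite (so in particular $H_0(X;\F_p)$ is finite-dimensional), and $p$-primary-ness is forced by $BX$ being $\F_p$-complete: for a finite discrete group $\pi$, the classifying space $B\pi$ is $\F_p$-complete if and only if $\pi$ is a finite $p$-group, as in \cite{bousfield_homotopy_1972}.

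For the second assertion, the equivalence $X \simeq \Omega BX$ yields $\pi_1 X \cong \pi_2 BX$, which is automatically abelian as a higher homotopy group. To establish that it is pro-$p$, I would appeal to the theory of $\F_p$-completions: since $BX$ is pointed connected with $\pi_1$ a finite $p$-group, it is nilpotent, and the higher homotopy groups of a nilpotent $\F_p$-complete space are Ext-$p$-complete \cite{bousfield_homotopy_1972}. In particular $\pi_2 BX$ acquires the structure of a $\Z_p$-module, which together with the Noetherianity of $H^*(BX;\F_p)$ (controlling the size of $\pi_2 BX$ via the Bousfield--Kan spectral sequence or Postnikov arguments) gives a finitely generated abelian pro-$p$ group.

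The main obstacle is not in any single step — each reduces to a citation — but in assembling the $\F_p$-completion facts in the right order: one must ensure that the nilpotency hypothesis needed for Bousfield--Kan completion to behave as expected is automatic from the finite $p$-group $\pi_1$ assumption, and that the Ext-$p$-completeness of $\pi_2 BX$ really delivers the pro-$p$ topology rather than just a $\Z_p$-module structure without good finiteness properties.
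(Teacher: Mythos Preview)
Your argument that $BX$ is of EM-type is essentially the paper's: connectedness is definitional, $\pi_1 BX \cong \pi_0 X$ is a finite $p$-group (the paper cites \cite[Lemma 2.1]{dwyerwilkerson_finloop} rather than arguing directly), and finite type of $H^*(BX)$ comes from the Dwyer--Wilkerson Noetherianity theorem.

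For the claim that $\pi_1 X$ is an abelian pro-$p$ group, your approach differs from the paper's, and the concerns you flag in your final paragraph are genuine gaps, not merely bookkeeping. The assertion that $BX$ is nilpotent because $\pi_1 BX$ is a finite $p$-group is false in general: nilpotency of a space requires $\pi_1$ to act nilpotently on every $\pi_n$, and a finite $p$-group can certainly act non-nilpotently on an abelian group (e.g., $\Z/2$ acting on $\Z$ by sign). Whether $BX$ for a $p$-compact group happens to be nilpotent is a separate, nontrivial question that you have not addressed. Without nilpotency you cannot invoke the standard Bousfield--Kan statement on Ext-$p$-completeness of homotopy groups, and your sketch for extracting finite generation from cohomological Noetherianity is too vague to count as an argument.

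The paper bypasses both issues by using the internal structure of $p$-compact groups rather than general completion theory. After reducing to connected $X$ (harmless since all components of a loop space are equivalent), the inclusion of a maximal torus $T \to X$ induces a surjection $\pi_1 T \cong \Z_p^r \twoheadrightarrow \pi_1 X$ \cite[Remark 1.3]{dwyerwilkerson_transfer}, which immediately gives abelianness and exhibits $\pi_1 X$ as an abstract quotient of a free $\Z_p$-module of finite rank. One then cites \cite[Proposition VI.5.4]{bousfield_homotopy_1972} to see that $\F_p$-finiteness of $X$ forces $\pi_1 X$ to be $p$-complete, so this quotient is closed and hence a finitely generated $\Z_p$-module. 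The maximal torus route thus delivers both the pro-$p$ structure and the finite generation in one stroke, with no nilpotency hypothesis required.
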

\begin{proof}
	By definition $BX$ is a connected space, and by \cite[Lemma 2.1]{dwyerwilkerson_finloop} $\pi_1BX \cong \pi_0X$ is a finite $p$-group. That $H^*(BX)$ is of finite type follows from the main result of \cite{dwyerwilkerson_finloop}. 

For the second part, we can assume that $X$ is connected since $X$ is a loop space and hence all its connected components have the same homotopy type. Let $T$ be a maximal torus of $X$, then $\pi_1X$ is always a quotient of $\pi_1T \cong \Z_p^r$, see for example \cite[Remark 1.3]{dwyerwilkerson_transfer}. Since $X$ is $\F_p$-finite, $\pi_1X$ is $p$-complete \cite[Proposition VI.5.4]{bousfield_homotopy_1972}, and the claim follows.
\end{proof}
 For the following, we recall that the $\F_p$-cohomological dimension of a space $X$, denoted $\dim_p(X)$, is the largest integer $i$ for which $H^i(X) \ne 0$ (with the convention that $\dim_p(X) = \infty$ if there is no such integer and $H^*(X) \neq 0$, and $-\infty$ if $H^*(X)$ vanishes). 

\begin{prop}[Dwyer--Greenlees--Iyengar]\label{thm:gorenstein_pcompact}
	Let $X$ be a $p$-compact group, then $C^*(BX) \to \F_p$ is orientable Gorenstein of shift $\dim_p(X)$. 
\end{prop}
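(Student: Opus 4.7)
The plan is to prove this via the Gorenstein ascent theorem for cochain algebras (Theorem 3.7) applied to a unitary embedding of $X$. The known polynomial structure of $H^*(BU(n);\F_p)$ provides a Gorenstein base, and a Poincaré duality argument handles the fiber.

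First, I would invoke the existence of a unitary embedding of $p$-compact groups, obtaining a monomorphism $i\colon X \to U(n)^{\wedge}_p$ for some $n$. This gives a fibration
\[
F \to BX \to BU(n)^{\wedge}_p,
\]
with $F = U(n)^{\wedge}_p/X$, and $H^*(F;\F_p)$ is finite-dimensional by the definition of monomorphism.

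Next, I would verify the hypotheses of Theorem 3.7. The space $BU(n)^{\wedge}_p$ is simply connected and $p$-complete, hence of EM-type; $BX$ is of EM-type by Lemma 4.8; and $H^*(F;\F_p)$ is finite-dimensional. Since $BU(n)$ is simply connected (hence $\F_p$-good), we have $C^*(BU(n)^{\wedge}_p) \simeq C^*(BU(n))$, whose cohomology $\F_p[c_1,\dots,c_n]$ is (algebraically) Gorenstein; by Example 4.4, $C^*(BU(n)^{\wedge}_p) \to \F_p$ is therefore Gorenstein of shift $\dim U(n) = n^2$.

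The key computational step is to show that $C^*(F) \to \F_p$ is Gorenstein of shift $-\dim_p(F)$, with $\dim_p(F) = n^2 - \dim_p(X)$. The homotopy fiber sequence $X \to U(n)^{\wedge}_p \to F$ exhibits $F$ as the quotient of one $\F_p$-finite loop space by another, and a standard Serre spectral sequence argument (matching top degrees in $H^{n^2}(U(n);\F_p)\cong \F_p$) shows that $H^*(F;\F_p)$ is a Poincaré duality algebra in dimension $n^2 - \dim_p(X)$. Lemma 2.11 then yields that $C^*(F)\to \F_p$ is orientable Gorenstein of shift $-\dim_p(F)$.

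Finally, applying Theorem 3.7(1) to the fibration above, we conclude that $C^*(BX) \to \F_p$ is Gorenstein of shift
\[
n^2 + (-\dim_p(F)) = \dim_p(X),
\]
and orientability is automatic by Lemma 3.10, since $BX$ is of EM-type with $\pi_1BX \cong \pi_0X$ a finite $p$-group (as cited in the proof of Lemma 4.8) and $k = \F_p$ has characteristic $p$. The main obstacle will be the bookkeeping around the unitary embedding (establishing the Poincaré duality of $F$ and the dimension identity), both of which are standard in the Dwyer--Wilkerson theory of $p$-compact groups but require care.
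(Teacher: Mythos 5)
Your route has a genuine gap at its key computational step. You need $C^*(F)\to\F_p$ to be Gorenstein (equivalently, by \Cref{cor:Gorenteincosmall}, that $H^*(F;\F_p)$ is a Poincar\'e duality algebra of formal dimension $n^2-\dim_p(X)$) in order to feed the fibration $F \to BX \to BU(n)^{\wedge}_p$ into \Cref{thm:ascent_spaces}(1). But the assertion that the homogeneous space $F = U(n)^{\wedge}_p/X$ of a monomorphism of $p$-compact groups is a mod~$p$ Poincar\'e duality space is not a ``standard Serre spectral sequence argument'': matching top degrees in the fibration $X \to U(n)^{\wedge}_p \to F$ at best shows that the top nonvanishing degree of $H^*(F)$ is $n^2-\dim_p(X)$ (and, modulo an orientability/$\pi_1$-action issue, that the top group is one-dimensional); it says nothing about the cup-product pairing $H^i(F)\otimes H^{d-i}(F)\to H^d(F)$ being perfect. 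In this paper that Poincar\'e duality statement is itself a \emph{corollary} of the proposition you are trying to prove (it is deduced right after \Cref{prop:relgorensteinpcompact} via \Cref{thm:ascent_spaces}(2), using that both $C^*(BX)$ and $C^*(BU(n)^{\wedge}_p)$ are Gorenstein), so as written your argument is circular; to break the circle you would have to import a substantive external result (e.g.\ Klein-type dualizing-spectrum arguments or a mod-$p$ analogue of Gottlieb's theorem for fibrations of $\F_p$-finite spaces), not a spectral sequence degree count.

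Beyond the gap, note that your approach is much heavier than what is needed: it invokes the unitary embedding theorem (\Cref{thm:unitaryembeddingpcompact}), which rests on the classification of $p$-compact groups, and which the paper deliberately reserves for the later \emph{local} duality statement (\Cref{thm:localgorensteindualitypcompact}). The paper's proof of this proposition is soft: it cites \cite[Section 10.2]{dgi_duality}, where the Gorenstein property of $C^*(BX)\to\F_p$ is obtained by descent along $\F_p\to\cal{E}\simeq C_*(X)$, using only that $H_*(X;\F_p)$ is a finite-dimensional connected Hopf algebra and hence a Poincar\'e duality algebra; this also identifies the shift as the duality dimension $\dim_p(X)$, and orientability follows from \Cref{lem:emtypepcompact} together with \Cref{lem:orientable} (the one part of your sketch that matches the paper). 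You also leave unverified the proxy-regularity of $C^*(BU(n)^{\wedge}_p)\to\F_p$ required by \Cref{thm:ascent_spaces}, though that is a minor point compared with the Poincar\'e duality issue.
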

\begin{proof}
	This is shown in \cite[Section 10.2]{dgi_duality}, except for identifying the shift. The proof relies on the fact that the graded ring $H_*(X)$ satisfies algebraic Poincar\'e duality of dimension $a$, which is exactly the shift. This implies that $a = \dim_p(X)$. Finally, it is automatically orientable, because \Cref{lem:emtypepcompact} shows that the conditions of \Cref{lem:orientable} are satisfied.
\end{proof}

This has the following  consequences. First, we consider the relative Gorenstein property for $p$-compact groups. We show that monomorphisms induce relative Gorenstein morphisms. Then we prove that the homotopy fiber of a monomorphism is a mod $p$  Poincar\'e duality space.

\begin{cor}\label{prop:relgorensteinpcompact}
	Let $X$ be a $p$-compact group and $Y \le_i X$ a subgroup, then $C^*(BX) \to C^*(BY)$ is relatively Gorenstein of shift $q=\dim_p(X) - \dim_p(Y)$, i.e.,
	\[
\Hom_{C^*(BX)}(C^*(BY),C^*(BX)) \simeq \Sigma^q C^*(BY). 
	\]
\end{cor}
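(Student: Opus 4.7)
The plan is to apply \Cref{thm:relgorensteinspaces}(1) directly to the map $Bi \colon BY \to BX$, with the fiber being $X/Y$. I need to verify each of the hypotheses of that theorem, all of which are essentially immediate from the setup.

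First, I would check the EM-type and $p$-completeness hypothesis. By \Cref{lem:emtypepcompact}, both $BX$ and $BY$ are of EM-type, and by definition of a $p$-compact group their classifying spaces are $\F_p$-complete. Next, the assumption that $Y \le_i X$ is a \emph{subgroup} means precisely that the homotopy fiber $X/Y$ of $Bi$ is $\F_p$-finite; in particular $H^*(X/Y;\F_p)$ is finite-dimensional, verifying the hypothesis on the fiber.

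For the Gorenstein hypotheses on the total and base, I would invoke \Cref{thm:gorenstein_pcompact}: $C^*(BX) \to \F_p$ is orientable Gorenstein of shift $s = \dim_p(X)$ and $C^*(BY) \to \F_p$ is orientable Gorenstein of shift $r = \dim_p(Y)$. These are precisely the shifts $s$ and $r$ appearing in the hypotheses of \Cref{thm:relgorensteinspaces}(1), and the relative Gorenstein conclusion of that theorem then reads
\[
\Hom_{C^*(BX)}(C^*(BY),C^*(BX)) \simeq \Sigma^{s-r} C^*(BY) = \Sigma^{\dim_p(X) - \dim_p(Y)} C^*(BY),
\]
which is the desired statement with $q = \dim_p(X) - \dim_p(Y)$.

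There is no real obstacle here: the result is essentially a direct specialization of the ascent machinery of \Cref{thm:relgorensteinspaces} to the setting of $p$-compact groups, using the fact that the Gorenstein shift for $BX$ is identified with the $\F_p$-cohomological dimension via \Cref{thm:gorenstein_pcompact}. The only point worth flagging is that we do not need to separately check orientability, dc-completeness, or cosmallness of $Q$, since \Cref{thm:relgorensteinspaces} already packages these checks for maps between spaces of EM-type with $\F_p$-finite fiber.
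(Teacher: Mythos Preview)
Your proposal is correct and matches the paper's proof essentially line for line: both verify the hypotheses of \Cref{thm:relgorensteinspaces}(1) for $Bi\colon BY\to BX$ using \Cref{lem:emtypepcompact} for EM-type, the definition of monomorphism for $\F_p$-finiteness of the fiber, and \Cref{thm:gorenstein_pcompact} for the Gorenstein shifts. There is nothing to add.
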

\begin{proof}
	We need only verify the conditions of \Cref{thm:relgorensteinspaces}(1) for the morphism $Bi \colon BY \to BX$. We have already seen in \Cref{lem:emtypepcompact} that classifying spaces of $p$-compact groups are always of EM-type. By the definition of a monomorphism the homotopy fiber $F$ of $Bi$ has finite-dimensional cohomology. Since $p$-compact groups always satisfy Gorenstein duality (\Cref{thm:gorenstein_pcompact}), \Cref{thm:relgorensteinspaces}(1) applies with $q = \dim_p(X) - \dim_p(Y)$ as required. 
\end{proof}

\begin{rem}
	An alternative proof of \Cref{prop:relgorensteinpcompact} using equivariant homotopy was given in \cite[Theorem 6.4]{bchv}.
\end{rem}

\begin{cor}
 	Let $i \colon Y \to X$ be a monomorphism of $p$-compact groups. Then $C^*(X/Y) \to \F_p$ is Gorenstein of shift $\dim_p(Y) - \dim_p(X)$. Moreover, if $X/Y$ is connected, then $C^*(X/Y)$ satisfies Poincar\'e duality of shift $\dim_p(Y) - \dim_p(X)$, i.e., $X/Y$ is a mod $p$ Poincar\'e duality space. 
 \end{cor}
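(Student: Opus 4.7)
The plan is to prove part (1) by ascent along the fibration
\[
X/Y \longrightarrow BY \xrightarrow{Bi} BX,
\]
and then to promote Gorenstein to Poincar\'e duality using \Cref{cor:Gorenteincosmall}.

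For part (1), I would verify the hypotheses of \Cref{thm:ascent_spaces}(2) for this fibration. Both $BX$ and $BY$ are of EM-type by \Cref{lem:emtypepcompact}, and the homotopy fiber $X/Y$ has finite-dimensional $\F_p$-cohomology by the very definition of a monomorphism of $p$-compact groups. By \Cref{thm:gorenstein_pcompact}, $C^*(BX;\F_p)$ and $C^*(BY;\F_p)$ are orientable Gorenstein of shifts $\dim_p(X)$ and $\dim_p(Y)$ respectively; in particular, they are proxy-regular. The ascent theorem then delivers that $C^*(X/Y;\F_p) \to \F_p$ is Gorenstein of shift $\dim_p(Y) - \dim_p(X)$.

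For part (2), assume $X/Y$ is connected, and set $R = C^*(X/Y;\F_p)$. The strategy is to apply the reverse implication of \Cref{cor:Gorenteincosmall}, whose hypotheses I would check in turn: $R$ is coconnective with $\pi_0 R = H^0(X/Y;\F_p) = \F_p$ by connectedness; it is cosmall, and hence proxy-regular, since $H^*(X/Y;\F_p)$ is finite-dimensional \cite[Section 5.5]{dgi_duality}; and by part (1) it is already Gorenstein of shift $\dim_p(Y)-\dim_p(X)$. It then remains to verify orientability, for which I would invoke \Cref{lem:orientable}. The long exact sequence of the fibration produces an extension
\[
1 \longrightarrow Q \longrightarrow \pi_1(X/Y) \longrightarrow K \longrightarrow 1,
\]
where $Q$ is a quotient of $\pi_2(BX) = \pi_1(X)$, an abelian pro-$p$ group by \Cref{lem:emtypepcompact}, and $K$ is a subgroup of $\pi_1(BY) = \pi_0(Y)$, a finite $p$-group. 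An extension of pro-$p$ groups is pro-$p$, so (after $\F_p$-completion if necessary) the hypothesis of \Cref{lem:orientable} on $\pi_1$ is satisfied. Orientability in hand, \Cref{cor:Gorenteincosmall} upgrades Gorenstein to Poincar\'e duality of the same shift $\dim_p(Y)-\dim_p(X)$, which is precisely the assertion that $X/Y$ is a mod $p$ Poincar\'e duality space.

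The main technical obstacle lies in the orientability step: one has to interpret $\pi_1(X/Y)$ as a pro-$p$ group in a way compatible with the hypotheses of \Cref{lem:orientable}. The natural route is to replace $X/Y$ by its $\F_p$-completion, using that $X/Y$ is nilpotent (as the fiber of a map between nilpotent $p$-complete spaces) with finite $\F_p$-cohomology, hence $\F_p$-good, so that passing to $(X/Y)^{\wedge}_p$ does not change the cochain algebra but endows $\pi_1$ with the desired pro-$p$ structure inherited from the extension above.
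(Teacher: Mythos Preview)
Your proposal is correct and follows essentially the same approach as the paper: apply \Cref{thm:ascent_spaces}(2) for the Gorenstein claim, then use the long exact sequence of the fibration together with \Cref{lem:emtypepcompact} to see that $\pi_1(X/Y)$ is pro-$p$, invoke \Cref{lem:orientable} for orientability, and conclude via \Cref{cor:Gorenteincosmall}. The paper does not pass to the $\F_p$-completion of $X/Y$ as you suggest in your final paragraph; it simply asserts directly that an extension of a finite $p$-group by a quotient of an abelian pro-$p$ group is pro-$p$ and applies \Cref{lem:orientable}(3) without further comment.
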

 \begin{proof}
  	We will apply \Cref{thm:ascent_spaces}(2). We have already seen that the classifying space of a $p$-compact group is of EM-type and satisfies Gorenstein duality. Thus \Cref{thm:ascent_spaces}(2) applies to show that $C^*(X/Y) \to \F_p$ is Gorenstein of shift $\dim_p(Y) - \dim_p(X)$.
	
	We claim that $\pi_1(X/Y)$ is a pro $p$-group. To see this, observe that the long exact sequence in homotopy takes the form
\[
\cdots \to \pi_1(X) \to \pi_1(X/Y) \to \pi_0(Y) \to \cdots
\]
From \Cref{lem:emtypepcompact} we see that $\pi_1(X)$ and $\pi_1(Y)$ are abelian pro-$p$ groups. Since $\pi_0(X)$ is a finite $p$-group, it follows that $\pi_1(X/Y)$ is a pro-$p$ group, and hence $C^*(X/Y)$ is automatically orientable by \Cref{lem:orientable}. By \Cref{cor:Gorenteincosmall} (which applies because $C^*(X/Y)$ is cosmall by \cite[Section 5.5]{dgi_duality}) $C^*(X/Y)$ satisfies Poincar\'e duality of shift $\dim_p(Y) - \dim_p(X)$. 
  \end{proof} 

Recall from \Cref{ex:liepcompact} that if $G$ is a compact Lie group with $\pi_0(G)$ a $p$-group, then $G^{\wedge}_p$ is a $p$-compact group. In particular, the $p$-completion $U(n)^\wedge_p $ is a $p$-compact group. In order to prove local Gorenstein duality, we will use ascent along unitary embeddings.  We will need the following \cite[Theorem 6.2]{cancas_finloop}, which relies on the classification of $p$-compact groups. 

\begin{thm}\label{thm:unitaryembeddingpcompact}
	Any $p$-compact group $X$ admits a monomorphism $X \to U(n)^{\wedge}_p$ for some $n > 0$. 
\end{thm}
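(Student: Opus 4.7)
The plan is to reduce to the classification of $p$-compact groups and construct faithful unitary representations case by case, exhibiting each as a monomorphism of $p$-compact groups in the sense of \Cref{lem:emtypepcompact}. Throughout, we must verify two things for any candidate map $BX \to BU(n)^{\wedge}_p$: that the source is a genuine subgroup (so the homotopy fiber is $\F_p$-finite) and that passing to loop spaces recovers a map of $p$-compact groups.

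First, I would reduce to the connected case. Using the short exact sequence of $p$-compact groups $X^{0} \to X \to \pi_0(X)$, where $\pi_0(X)$ is a finite $p$-group by \Cref{lem:emtypepcompact}, it suffices to produce unitary embeddings for $X^0$ and for $\pi_0(X)$ separately and to combine them (for instance by tensoring or by the standard procedure of taking induced representations). The component group $\pi_0(X)$ embeds into $U(|\pi_0(X)|)$ via the regular representation, and $p$-completion yields the required monomorphism.

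For the connected case, the plan is to invoke the classification theorem of Andersen--Grodal (for $p$ odd) and Andersen--Grodal--M{\o}ller--Viruel (for $p=2$), which identifies connected $p$-compact groups, up to isogeny, with products of simply connected almost simple $p$-compact groups classified by irreducible finite $\Z_p$-reflection groups. These split into two classes: those of Lie type, i.e., $p$-completions $G^{\wedge}_p$ of simply connected compact Lie groups $G$, and the finitely many exotic families (the Clark--Ewing groups at odd primes and the Dwyer--Wilkerson group $DI(4)$ at $p=2$). For Lie-type factors, the Peter--Weyl theorem produces a faithful unitary representation $G \hookrightarrow U(n)$, and $p$-completion gives a monomorphism $G^{\wedge}_p \to U(n)^{\wedge}_p$ since the homotopy fiber $(U(n)/G)^{\wedge}_p$ remains $\F_p$-finite.

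The main obstacle is the exotic case, where no underlying Lie group exists and one must construct a faithful unitary representation by hand. For the Clark--Ewing groups, this is achieved by lifting a complex representation of the Weyl group $W$ acting on its reflection representation to a map of classifying spaces, using the Dwyer--Wilkerson description $BX \simeq (BT \times_W EW)^{\wedge}_p$ and obstruction theory. For $DI(4)$, explicit faithful $2$-complete unitary embeddings have been constructed via the exceptional mod $2$ cohomology structure. In each exotic case, verifying $\F_p$-finiteness of the fiber reduces to a finite Eilenberg--Moore spectral sequence computation comparing $H^*(BX)$ to $H^*(BU(n))$ as modules over the cohomology of the classifying space of a common maximal torus.
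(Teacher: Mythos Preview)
The paper itself does not prove this statement: it simply cites \cite[Theorem~6.2]{cancas_finloop} and remarks that the proof there relies on the classification of $p$-compact groups. Your outline follows the same global strategy as that reference (reduce via the classification, treat Lie-type and exotic factors separately), so at the level of architecture you are aligned with the cited proof.

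That said, two steps in your outline are genuine gaps rather than details.

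First, the exotic case is not exhausted by ``Clark--Ewing groups at odd primes and $DI(4)$ at $p=2$''. The homotopy equivalence $BX \simeq (BT \times_W EW)^{\wedge}_p$ that you invoke is only available when $p \nmid |W|$; several irreducible exotic families at odd primes (the Aguad\'e examples, the generalized Grassmannians, etc.) have $p \mid |W|$, and for these the obstruction-theoretic lift you sketch does not apply. In the cited work these cases are handled individually, and the construction for $DI(4)$ is due to Ziemia\'nski by a quite different method than you indicate. Saying ``a finite Eilenberg--Moore spectral sequence computation'' does not capture what is actually required to verify that a candidate map is a monomorphism in these cases.

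Second, your reduction to the connected case is too optimistic. There is no a priori notion of ``induced representation'' or ``tensor product of representations'' for $p$-compact groups that lets you combine an embedding of $X^0$ with one of $\pi_0 X$ into an embedding of the extension $X$; the existence of a splitting up to isogeny, or of enough functoriality to make induction work, is exactly part of what has to be established. In the cited reference this is handled by first embedding $X$ into the normalizer of a maximal torus of a suitable unitary group, which is a substantive step you have not addressed. (Also, your reference to \Cref{lem:emtypepcompact} in the first sentence is misplaced: that lemma concerns EM-type and the fundamental group, not the definition of monomorphism.)
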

 Then we deduce that any $p$-compact group satisfies local Gorenstein. 

\begin{thm}\label{thm:localgorensteindualitypcompact}
	Let $X$ be a $p$-compact group, then $C^*(BX)$ satisfies local Gorenstein duality of shift $\dim_p(X)$. 
\end{thm}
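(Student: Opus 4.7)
The plan is to reduce to the case of $U(n)^\wedge_p$ via the unitary embedding theorem for $p$-compact groups (\Cref{thm:unitaryembeddingpcompact}) and then ascend along the induced cochain morphism using \Cref{thm:relgorensteinspaces}(2). First I would invoke \Cref{thm:unitaryembeddingpcompact} to choose a monomorphism $i \colon X \to U(n)^\wedge_p$ for some $n \geq 1$. This yields a map of classifying spaces $Bi \colon BX \to BU(n)^\wedge_p$ whose homotopy fiber $F = U(n)^\wedge_p/X$ is $\F_p$-finite by the definition of a monomorphism of $p$-compact groups.

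Next I would verify the hypotheses of \Cref{thm:relgorensteinspaces}(2) applied to the map $Bi$. Both $BX$ and $BU(n)^\wedge_p$ are $p$-complete by the definition of a $p$-compact group, and they are of EM-type by \Cref{lem:emtypepcompact} (this uses \Cref{ex:liepcompact} to recognize $U(n)^\wedge_p$ as a $p$-compact group). The cohomology $H^*(F;\F_p)$ is finite-dimensional as just noted, and $C^*(BU(n)^\wedge_p) \to \F_p$ is Gorenstein of shift $s = \dim_p(U(n)^\wedge_p) = \dim(U(n)) = n^2$ by \Cref{thm:gorenstein_pcompact}. Similarly $C^*(BX) \to \F_p$ is Gorenstein of shift $r = \dim_p(X)$ by \Cref{thm:gorenstein_pcompact}, which supplies the shift needed in clause (2) of the ascent result.

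The remaining ingredient is that the base $C^*(BU(n)^\wedge_p)$ already satisfies local Gorenstein duality of shift $s = n^2$. This is immediate from \Cref{ex:U(n)}: the cohomology ring $H^*(BU(n);\F_p) \cong \F_p[c_1,\ldots,c_n]$ is a polynomial ring, hence Gorenstein, so $C^*(BU(n)^\wedge_p)$ is algebraically Gorenstein in the sense of \Cref{defn:alg_gorenstein} and therefore satisfies local Gorenstein duality of the corresponding shift by \Cref{prop:4.7bhv2}. Invoking \Cref{thm:relgorensteinspaces}(2) then transfers this property along $Bi$ to conclude that $C^*(BX)$ satisfies local Gorenstein duality of shift $r = \dim_p(X)$, as desired.

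I do not expect a serious obstacle here: once the unitary embedding from \Cref{thm:unitaryembeddingpcompact} and the ascent machinery of \Cref{thm:relgorensteinspaces} are in place, the argument is a bookkeeping exercise in matching the shifts $\dim_p(X)$ and $\dim_p(U(n)^\wedge_p)$. The only subtlety worth double-checking is that the shift recorded in \Cref{ex:U(n)} agrees with the value $\dim_p(U(n)^\wedge_p)$ used in \Cref{thm:gorenstein_pcompact}, which holds since $U(n)$ is a connected compact Lie group and therefore $\dim_p(U(n)^\wedge_p) = \dim_p(U(n)) = \dim(U(n)) = n^2$.
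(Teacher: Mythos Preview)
Your proposal is correct and follows essentially the same route as the paper: choose a unitary embedding via \Cref{thm:unitaryembeddingpcompact}, verify the hypotheses of \Cref{thm:relgorensteinspaces}(2) for the map $Bi$ (EM-type, finite fiber, Gorenstein shifts from \Cref{thm:gorenstein_pcompact}), and then ascend local Gorenstein duality from $C^*(BU(n)^\wedge_p)$ using \Cref{ex:U(n)}. The paper compresses the verification step into a pointer to the argument of \Cref{prop:relgorensteinpcompact}, but the content is identical to what you spell out.
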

\begin{proof}
	Choose a monomorphism $i \colon BX \to BU(n)^{\wedge}_p$. The same argument as in \Cref{prop:relgorensteinpcompact} along with the fact that $C^*(BU(n)^{\wedge}_p)$ satisfies local Gorenstein duality (\Cref{ex:U(n)}) shows that the conditions of \Cref{thm:relgorensteinspaces}(2) are satisfied, so $C^*(BX)$ satisfies local Gorenstein duality of shift $\dim_p(X)$ as claimed.  
\end{proof}

\subsection{Compact Lie groups}

Let $G$ be a compact Lie group and continue to work with $\F_p$-coefficients. Following Benson and Greenlees \cite{BensonGreenlees1997Commutative}, given a $d$-dimensional real representation $V$ of $G$, we say that it is orientable (with respect to $\F_p$) if the action of $G$ on $H_d(S^V;\F_p)$ is trivial. The adjoint representation is orientable if for example $G$ is finite or connected. In this case, we have the following, see \cite[Section 10.3]{dgi_duality} and \cite{bg_localduality}.
\begin{thm}[Dwyer--Greenlees--Iyengar, Benson--Greenlees]
	If $G$ is a compact Lie group whose adjoint representation is orientable, then $C^*(BG) \to \F_p$ is Gorenstein of shift $\dim(G)$, and $C^*(BG)$ satisfies local Gorenstein duality of the same shift. 
\end{thm}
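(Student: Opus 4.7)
The plan is to imitate the proof of \Cref{thm:localgorensteindualitypcompact}, ascending local Gorenstein duality from $BU(n)$ along a unitary embedding of $G$. Since $G$ is a compact Lie group, $\pi_0 G$ is finite and $BG$ is $\F_p$-good, so $C^*(BG;\F_p)\simeq C^*(BG^\wedge_p;\F_p)$. The completed space $BG^\wedge_p$ is of EM-type: it is connected, its mod $p$ cohomology coincides with $H^*(BG;\F_p)$ which is of finite type by Venkov's theorem, and $\pi_1(BG^\wedge_p)\cong(\pi_0 G)^\wedge_p$ is a finite $p$-group. The space $BU(n)^\wedge_p$ is trivially of EM-type, being simply connected with polynomial mod $p$ cohomology.

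Next I would pick a faithful unitary representation $\iota\colon G\hookrightarrow U(n)$ (available by Peter--Weyl) and study the associated fibration $U(n)/G\to BG\xr{B\iota}BU(n)$, in which $U(n)/G$ is a closed smooth manifold of dimension $\dim U(n)-\dim G$. Because $U(n)$ is connected, its adjoint action on $\mathfrak u(n)$ has trivial determinant character, so $\det(\mathfrak u(n)/\mathfrak g)$ is trivial as a $G$-representation if and only if $\det(\mathfrak g)$ is; consequently $\F_p$-orientability of the adjoint representation of $G$ is equivalent to $\F_p$-orientability of $U(n)/G$, so that $H^*(U(n)/G;\F_p)$ is a Poincar\'e duality algebra of formal dimension $d=\dim U(n)-\dim G$. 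Applying \Cref{lem:emss} to the EM-type space $BU(n)^\wedge_p$ and using $p$-goodness, the homotopy fiber $F$ of the $p$-completed map satisfies
\[
C^*(F;\F_p)\simeq C^*(BG;\F_p)\otimes_{C^*(BU(n);\F_p)}\F_p\simeq C^*(U(n)/G;\F_p),
\]
where the second equivalence uses \Cref{lem:emss} again for the original (non-completed) fibration. In particular, $H^*(F;\F_p)$ is finite-dimensional and Poincar\'e duality.

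Finally I would verify and invoke \Cref{thm:relgorensteinspaces}(2) with $X = BU(n)^\wedge_p$ and $Y = BG^\wedge_p$. By \Cref{ex:U(n)}, $C^*(BU(n);\F_p)$ is algebraically Gorenstein of shift $\dim U(n)$ and satisfies local Gorenstein duality. For the Gorenstein condition on $C^*(BG;\F_p)$, Poincar\'e duality together with \Cref{cor:Gorenteincosmall} gives that $C^*(F;\F_p)$ is orientably Gorenstein of shift $-d$, and then \Cref{thm:ascent_spaces}(1) yields shift $\dim U(n)+(-d)=\dim G$ for $C^*(BG;\F_p)$, recovering the first half of the theorem. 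With all hypotheses of \Cref{thm:relgorensteinspaces}(2) verified, that result delivers local Gorenstein duality of shift $\dim G$ for $C^*(BG;\F_p)$, completing the proof. The main technical hurdle is the $p$-completion bookkeeping---specifically identifying the homotopy fiber of $B\iota^\wedge_p$ with $U(n)/G$ at the level of $\F_p$-cochains (where the EM-type of $BU(n)^\wedge_p$ is essential via \Cref{lem:emss})---together with the equivalence between $\F_p$-orientability of the adjoint representation and $\F_p$-orientability of $U(n)/G$.
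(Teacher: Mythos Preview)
Your proposal is correct and follows essentially the same approach the paper indicates: the theorem is stated with attribution to \cite{dgi_duality,bg_localduality}, and the subsequent remark (together with the discussion in the introduction) explains that a unitary embedding $G\hookrightarrow U(n)$ satisfies the hypotheses of \Cref{thm:relgorensteinspaces}(2), which is exactly what you carry out. Your bookkeeping on $p$-completion, the identification of $C^*(F)$ with $C^*(U(n)/G)$ via \Cref{lem:emss}, and the orientability argument linking the adjoint representation to $\F_p$-orientability of $U(n)/G$ are all sound.
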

\begin{rem}
  One can also verify using the methods of this paper that for a compact Lie group as above, $C^*(BG)$ satisfies local Gorenstein duality.  Indeed, a unitary embedding $G \to U(n)$ satisfies the assumptions of \Cref{thm:relgorensteinspaces}(2), which gives the result. This is equivalent to the approach taken in \cite[Proposition 4.32]{bhv2}. The methods used in this paper are essentially a combination of those used by Benson and Greenlees in \cite{bg_localduality} (to determine the relative Gorenstein condition) and by Benson in \cite{benson_shortproof}, who gave a short proof in the case of a finite group $G$, using an ascent result which is generalized in \Cref{prop:bhv_ascent}. 
\end{rem}
More generally, $C^*(BG) \to \F_p$ is a `generalized' Gorenstein morphism, where we allow twists by an invertible element that may not be a suspension of $\F_p$, see \cite{greenlees_borel}. In fact, even in the case where the shift is a suspension of $\F_p$, it may not be by $\dim(G)$, as the following example from \cite{greenlees_borel} demonstrates.
\begin{ex}
	Consider the compact Lie group $O(2)$ and suppose that $p$ is odd. Then,
	\[
\Hom_{C^*(BO(2))}(\F_p,C^*(BO(2))) \simeq \Sigma^3\F_p,
	\]
	whereas $\dim(O(2)) = 1$. 
\end{ex}
One can explain this example in the following way: $\Omega(BO(2))^{\wedge}_p$ is a $p$-compact group with $\dim_p(\Omega(BO(2))^{\wedge}_p) = 3$, and then we can apply \Cref{thm:gorenstein_pcompact}. 

  In general the classifying space $BG$ of any compact Lie group is $p$-good since the fundamental group is finite, so that $C^*(BG^{\wedge}_p) \simeq C^*(BG)$, and moreover $\pi_1(BG^{\wedge}_p)$ is a finite $p$-group, see \cite[Theorem 7.3]{bg_localduality}. Thus, if $\Omega (BG^{\wedge}_p)$ is $\F_p$-finite, then $BG^{\wedge}_p$ is the classifying space of the $p$-compact group $\Omega (BG^{\wedge}_p)$.  Note that $\Omega (BG^{\wedge}_p)$ is not necessarily equivalent to $G^{\wedge}_p$, and in fact, that happens only if $\pi_0G$ a finite $p$-group.
 We record this in a definition. 

\begin{defn}[Ishiguro]
	A compact Lie group $G$ is said to be of $p$-compact type if $\Omega (BG^{\wedge}_p)$ is $\F_p$-finite. 
\end{defn}


In particular, if $G$ is of $p$-compact type, then $\Omega (BG^{\wedge}_p)$ is a $p$-compact group.  We can then apply \Cref{thm:gorenstein_pcompact,thm:localgorensteindualitypcompact} to $\Omega (BG^{\wedge}_p)$ to deduce the following. 
\begin{thm}\label{thm:liepcompact}
	Let $G$ be a compact Lie group of $p$-compact type, then $C^*(BG) \to \F_p$ is Gorenstein of shift $\dim_p(\Omega (BG^{\wedge}_p))$ and $C^*(BG) \to \F_p$ satisfies local Gorenstein duality of the same shift. 
\end{thm}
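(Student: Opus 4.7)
The plan is to reduce the theorem directly to \Cref{thm:gorenstein_pcompact} and \Cref{thm:localgorensteindualitypcompact} by viewing $BG^{\wedge}_p$ as the classifying space of a genuine $p$-compact group. The key observation, already essentially recorded in the discussion preceding the statement, is that the triple $(\Omega(BG^{\wedge}_p), BG^{\wedge}_p, \mathrm{id})$ satisfies the defining properties of a $p$-compact group precisely when $G$ is of $p$-compact type.

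First I would verify that $BG^{\wedge}_p$ qualifies as the classifying space of a $p$-compact group: it is pointed and $\F_p$-complete by construction of the Bousfield completion, and it is connected because $BG$ has a finite fundamental group (so $\pi_1(BG^{\wedge}_p)$ is a finite $p$-group, in particular $BG^{\wedge}_p$ is connected after the remark that $\pi_0(G)$ a finite $p$-group is \emph{not} required here, only for $\Omega(BG^{\wedge}_p) \simeq G^{\wedge}_p$). The loop space $\Omega(BG^{\wedge}_p)$ is $\F_p$-finite by the very definition of $G$ being of $p$-compact type. Hence $\Omega(BG^{\wedge}_p)$ is a $p$-compact group with classifying space $BG^{\wedge}_p$.

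Next I would transfer the problem from $BG$ to $BG^{\wedge}_p$. Since $\pi_1(BG)$ is finite, $BG$ is $\F_p$-good, so the completion map induces an equivalence of ring spectra
\[
C^*(BG;\F_p) \simeq C^*(BG^{\wedge}_p;\F_p).
\]
Now I apply \Cref{thm:gorenstein_pcompact} to the $p$-compact group $\Omega(BG^{\wedge}_p)$ to conclude that $C^*(BG^{\wedge}_p) \to \F_p$ is orientable Gorenstein of shift $\dim_p(\Omega(BG^{\wedge}_p))$, and similarly \Cref{thm:localgorensteindualitypcompact} gives local Gorenstein duality of the same shift. Combining with the previous equivalence yields both conclusions for $C^*(BG;\F_p) \to \F_p$.

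There is no real obstacle here: the work has been front-loaded into the general theory of $p$-compact groups developed in the previous subsection, and the $p$-compact type hypothesis is designed exactly to make this reduction legal. The only subtlety worth spelling out is that the shift appearing in the statement is $\dim_p(\Omega(BG^{\wedge}_p))$ rather than $\dim(G)$, and that this really depends on the $p$-completed loop space and not on $G$ itself, as already illustrated by the $O(2)$ example at an odd prime which precedes the definition of $p$-compact type.
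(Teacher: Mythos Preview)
Your proposal is correct and follows essentially the same route as the paper: the paper's proof is a one-line invocation of \Cref{thm:gorenstein_pcompact} and \Cref{thm:localgorensteindualitypcompact} applied to the $p$-compact group $\Omega(BG^{\wedge}_p)$, relying on the discussion preceding the statement for the facts that $BG$ is $\F_p$-good and that $BG^{\wedge}_p$ is the classifying space of a $p$-compact group. You have simply spelled out those ambient facts explicitly.
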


\begin{ex}
	We return to the example of $O(2)$ at an odd prime. By a direct calculation with the Eilenberg--Moore spectral sequence, one can calculate that $\Omega (BO(2)^{\wedge}_p)$ is $\F_p$-finite and has $\dim_p(\Omega(BO(2)^{\wedge}_p) = 3$, as expected. More generally, at odd primes one has that $H^*(BO(2n)^{\wedge}_p) \cong \F_p[p_1,\ldots,p_n]$ with $|p_i| = 4i$. The Eilenberg--Moore spectral sequence shows that $H^*(\Omega (BO(2n)^{\wedge}_p)) \cong \Lambda_{\F_p}(t_1,\ldots,t_n)$, the exterior algebra on classes $t_i$ with $|t_i| = 4i-1$. This is $\F_p$-finite, and so $C^*(BO(2n))$ is of $p$-compact type, and hence satisfies local Gorenstein duality of shift $\dim_p(\Omega (BO(2n)^{\wedge}_p)) = \sum_{i=1}^n(4i-1)=  n(2n+1)$. In fact $BO(2n)^\wedge_p\simeq BSp(n)^\wedge_p$ at odd primes.
\end{ex}

It is an interesting open problem to find conditions on a compact Lie group $G$ so that $G$ is of $p$-compact type. Finite groups provide examples showing that it does not hold in general, for example, consider the case of $\Sigma_3$ at the prime 3. It is necessary, but not sufficient, that $G$ satisfies the following conditions (see \cite[Proposition 3.1]{Ishiguro2001Toral}):
\begin{enumerate}
	\item $\pi_0G$ is $p$-nilpotent
	\item $\pi_1((BG)^{\wedge}_p)$ is isomorphic to a $p$-Sylow subgroup of $\pi_0G$. 
\end{enumerate}
Under some more hypotheses, we have stronger results, for example if the connected component has rank 1, then $G$ is of $p$-compact type if $\pi_0G$ is $p$-nilpotent, see \cite[Theorem 2]{Ishiguro2001Classifying}.

\subsection{Local Gorenstein duality for rational spaces}
We now switch to the rational case, i.e., we take $k = \Q$. In this case, the conditions of \Cref{thm:relgorensteinspaces} become particularly easy to check given the fact that the algebraic Noether normalization can be lifted to spectra. 

\begin{thm}\label{thm:rational}
	Let $X$ be a simply connected rational space with Noetherian cohomology. Then, if $C^*(X;\Q) \to \Q$ is Gorenstein of shift $r$, then $C^*(X;\Q)$ satisfies local Gorenstein duality of shift $r$. 
\end{thm}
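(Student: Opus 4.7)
The plan is to reduce to the ascent theorem \Cref{thm:relgorensteinspaces}(2) by realizing a graded Noether normalization of $H^*(X;\Q)$ as a map $f\colon X\to Y$, where $Y$ is a finite product of rational Eilenberg--MacLane spaces. With such a map in hand, $C^*(Y;\Q)$ will be algebraically Gorenstein and the hypotheses of the ascent theorem will all reduce to standard properties of polynomial rings.

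First, I would produce a Noether normalization $\Q[y_1,\ldots,y_d]\hookrightarrow H^*(X;\Q)$ with each $y_i$ in even positive degree $n_i$. Because $X$ is simply connected with Noetherian cohomology, $H^*(X;\Q)$ is a graded-commutative, connected, finitely generated $\Q$-algebra. Since in characteristic zero every odd-degree element squares to zero, the strictly commutative subalgebra $B\subset H^*(X;\Q)$ generated by the even-degree algebra generators is a finitely generated commutative graded $\Q$-algebra over which $H^*(X;\Q)$ is a finite module (odd-degree generators contribute only a finite-dimensional exterior-type span). Applying the usual graded Noether normalization to $B$ yields $y_1,\ldots,y_d\in B$ of positive even degrees $n_i$ such that $B$, and hence $H^*(X;\Q)$, is finitely generated as a module over $\Q[y_1,\ldots,y_d]$.

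Next, I would set $Y=\prod_{i=1}^d K(\Q,n_i)$, which is a simply connected space of EM-type with $H^*(Y;\Q)\cong\Q[y_1,\ldots,y_d]$. Each class $y_i\in H^{n_i}(X;\Q)$ classifies a map $X\to K(\Q,n_i)$, and these assemble into $f\colon X\to Y$ whose induced map on cohomology is the chosen Noether normalization. Let $F$ be the homotopy fiber of $f$. By \Cref{lem:emss}, $C^*(F;\Q)\simeq C^*(X;\Q)\otimes_{C^*(Y;\Q)}\Q$, and the associated Tor spectral sequence converges from $\mathrm{Tor}_{\Q[y_1,\ldots,y_d]}(H^*(X;\Q),\Q)$ to $H^*(F;\Q)$. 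Since $H^*(X;\Q)$ is finitely generated over the polynomial ring $\Q[y_1,\ldots,y_d]$, which has finite global dimension by Hilbert's syzygy theorem, this Tor is finite-dimensional over $\Q$; hence $H^*(F;\Q)$ is finite-dimensional.

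Finally, $H^*(Y;\Q)$ is a graded polynomial algebra, in particular regular and Gorenstein, so $C^*(Y;\Q)$ is algebraically Gorenstein of some shift $s$; it is also proxy-regular, with Koszul object built from the regular sequence $y_1,\ldots,y_d$, so $C^*(Y;\Q)\to\Q$ is Gorenstein of shift $s$ in the DGI sense and, by \Cref{prop:4.7bhv2}, satisfies local Gorenstein duality of the same shift $s$. All hypotheses of \Cref{thm:relgorensteinspaces}(2) applied to the map $f$ (with our $Y$ playing the role of the ambient space and our $X$ that of the source) are then met: both spaces are of EM-type, $H^*(F;\Q)$ is finite-dimensional, $C^*(Y;\Q)$ is Gorenstein of shift $s$ and satisfies local Gorenstein duality of shift $s$, and $C^*(X;\Q)$ is Gorenstein of shift $r$ by assumption. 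The conclusion is that $C^*(X;\Q)$ satisfies local Gorenstein duality of shift $r$. The main technical point is the even-degree Noether normalization, which is exactly what is needed to lift the algebraic step to a map of EM-type spaces targeting a polynomial cochain algebra.
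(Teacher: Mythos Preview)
Your proposal is correct and follows essentially the same route as the paper: realize an even-degree Noether normalization of $H^*(X;\Q)$ by a map $X\to\prod_i K(\Q,n_i)$, observe that the target has polynomial cohomology and hence is algebraically Gorenstein (so satisfies local Gorenstein duality by \Cref{prop:4.7bhv2}), and then invoke \Cref{thm:relgorensteinspaces}(2). The paper cites \cite{GreenleesHessShamir2013Complete} for the lifting of the Noether normalization and \cite{greenlees_hi} for finiteness, whereas you supply the even-degree normalization argument and the Tor computation for $H^*(F;\Q)$ directly; these are the same ideas unpacked.
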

\begin{proof}
	The key observation is due to Greenlees--Hess--Shamir \cite[Proposition 3.2]{GreenleesHessShamir2013Complete}. By assumption on $X$ there exists a Noether normalization $R_* \cong \Q[x_1,\ldots,x_n]$ of $H^*(X;\Q)$, where the polynomial algebra is concentrated in even degrees. We can realize this polynomial subring via a map $X \to \prod_i K(\Q,2k_i)$, giving a map of ring spectra $R=C^*(\prod_i K(\Q,2k_i),\Q) \to C^*(X;\Q)$, which is finite by \cite[Lemma 10.2]{greenlees_hi}.

	We now observe that $R_*$ is a regular local ring, and in particular is Gorenstein. The universal coefficient spectral sequence
	\[
\Ext_{\pi_*R}^{\ast,\ast}(k,\pi_*R) \implies \pi_*\Hom_R(k,R)
	\]
	shows that $R \to k$ is Gorenstein of some shift $r$. We claim that $R$ satisfies local Gorenstein duality of the same shift. Indeed, since $R_*$ is a regular local ring, it is algebraically Gorenstein in the sense of \cite[Definition 4.5]{bhv2}, and hence satisfies local Gorenstein duality by \Cref{prop:4.7bhv2}. That the shifts are the same can be deduced from the collapsing of the local cohomology spectral sequence of Greenlees--Hess--Shamir \cite[Proposition A.2]{GreenleesHessShamir2013Complete}. 

By \cite[Section 7.B]{greenlees_hi} both $R$ and $C^*(X;\Q)$ are dc-complete. Thus, if $C^*(X;\Q) \to \Q$ is Gorenstein of shift $r$, then \Cref{thm:relgorensteinspaces} applies to show it is satisfies local Gorenstein duality of shift $r$. 
\end{proof}
\begin{ex}
The following example is taken from \cite[Example A.6]{GreenleesHessShamir2013Complete}. Identify, $\mathbb{C}P^{\infty}$ with $BS^1$, and then consider the inclusion $BS^1 \to BS^3$. This gives a map $\mathbb{C}P^{\infty} \times \mathbb{C} P^{\infty} \to BS^3 \times BS^3$. We let $X$ denote the fiber,  so that there is a fibration
	\[
S^3 \times S^3 \to X \to \mathbb{C}P^{\infty} \times \mathbb{C} P^{\infty}.
	\]
 The rational cohomology ring $H^*(X;\Q)$ is isomorphic to $\Q[u,v,p]/(u^2,uv,up,p^2)$ where $u,v,p$ have degrees $2$, $2$, and $5$. This ring is not Gorenstein, however, the map $C^*(X;\Q) \to \Q$ is Gorenstein of shift $-4$, and hence satisfies local Gorenstein duality of the same shift by \Cref{thm:rational}. 
\end{ex}

\section{Local Gorenstein duality for $p$-local compact groups}\label{sec:pcompact}
In this section we continue to write $C^*(X)$ and $H^*(X)$ where the coefficients are understood, unless any confusion is likely to arise. Recall that a compact Lie group gives rise to a $p$-compact group whenever $\pi_0G$ is a finite $p$-group. In order to capture the homotopy theory of compact Lie groups in general, Broto, Levi, and Oliver introduced the concept of a $p$-local compact group \cite{blo_fusion,blo_pcompact}. To motivate the definition, let $G$ be a finite $p$-group, $S$ a Sylow $p$-subgroup, and consider the category $\cal{F}_S(G)$ with objects subgroups of $S$ and morphisms $\Hom_{\cal{F}_S(G)}(P,Q) = \Hom_G(P,Q)$, those morphisms induced by subconjugation inside $G$. This is the fusion category of $G$ over $S$, and many results and concepts in group theory can be stated in terms of this category. The idea of $p$-local finite groups, and more generally $p$-local compact groups, is to generalize this where we are given only a finite $p$-group $S$ (or a discrete $p$-toral group), and we define a category from this, with similar properties to the category $\cal{F}_S(G)$. 

 In more detail, we fix a discrete $p$-toral group $S$, that is, a group that fits in an extension
\[
1 \to (\Z/p^{\infty})^r \to S \to \pi \to 1,
\]
where $\pi$ is a finite $p$-group. We then define a fusion system $\cal{F}$ on $S$ to be a category whose objects are the subgroups of $S$, and whose morphisms satisfy the following properties for $P,Q \le S$:
\begin{enumerate}
	\item $\Hom_S(P,Q) \subseteq \Hom_{\cal{F}}(P,Q) \subseteq \Inj(P,Q)$.
	\item Every morphism in $\cal{F}$ factors as an isomorphism followed by an inclusion. 
\end{enumerate}
In general, one is interested in fusion systems which are saturated, a technical condition defined in \cite[Definition 1.2]{blo_pcompact}. Given a fusion system on a discrete $p$-toral group $S$, Broto, Levi, and Oliver constructed another category, the centric linking system $\cal{L}$. One then defines a classifying space $|\cal{L}|^{\wedge}_p$, as the $p$-completion of the nerve of the category $\cal{L}$. In the case where $S$ is a finite $p$-group, it was shown by Chermak \cite{chermak_existence} that the data $(S,\cF)$ already uniquely determines the category $\cL$, while the general case was shown by Levi and Libman \cite{LeviLibman2015Existence}. 

\begin{defn}
	A $p$-local compact group $\cal{G} = (S,\cal{F})$ consists of a discrete $p$-toral group $S$ and $\cal{F}$ a fusion system on $S$. The classifying space $B\cal{G}$ is defined as the $p$-completion of the nerve of the centric linking systems $\cal{L}$ associated to $(S,\cal{F})$. If $S$ is a finite $p$-group, then $\cal{G}$ is called a $p$-local finite group. 
	\end{defn}
We point out that this classifying space comes with a canonical morphism $\theta_S \colon BS \to B\cal{G}$. 
\begin{lem}
	For a $p$-local compact group $\cal{G} = (S,\cal{F})$, the classifying space $B\cal{G}$ is of Eilenberg--Moore type. 
\end{lem}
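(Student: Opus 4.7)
The plan is to verify the three defining conditions of \Cref{def:emtype} for the space $B\cG = |\cL|^\wedge_p$ at the prime $p$: connectedness, finite-type $\F_p$-cohomology, and finiteness of $\pi_1(B\cG)$ as a $p$-group. Connectedness is immediate: the nerve $|\cL|$ is connected because every object subgroup of $S$ admits an inclusion-type morphism in $\cL$ with target $S$ itself, so all vertices of $|\cL|$ lie in the path component of the object $S$; since $p$-completion preserves connectedness, $B\cG$ is connected.

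For finite-type cohomology, I would invoke the theorem of Broto--Levi--Oliver establishing that $H^*(B\cG;\F_p)$ is Noetherian for any $p$-local compact group. This is the analog for compact groups of the corresponding result for $p$-local finite groups, and is proved via a stable elements formula identifying $H^*(B\cG;\F_p)$ with the subring of $\cF$-stable elements inside $H^*(BS;\F_p)$; the outer ring is Noetherian because $S$ is discrete $p$-toral, so $H^*(BS;\F_p)$ is a finitely generated module over a polynomial algebra coming from the identity component of $S$. Noetherianness in particular forces finite type as a graded $\F_p$-vector space.

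For $\pi_1(B\cG)$, the canonical morphism $\theta_S \colon BS \to B\cG$ induces a surjection on $\pi_1$, exhibiting $\pi_1(|\cL|)$ as a quotient of $S$. Since $B\cG$ is $p$-complete by construction, $\pi_1(B\cG)$ is the Bousfield--Kan $p$-completion of this quotient. The structural results of Broto--Levi--Oliver for centric linking systems of $p$-local compact groups then show that this $p$-completion is a finite $p$-group, corresponding to the group of connected components of $\cG$. The main obstacle is locating the precise reference for this $\pi_1$ statement in the $p$-local compact group literature; as a fallback one can argue purely cohomologically, since $H^1(B\cG;\F_p)$ is a finite-dimensional $\F_p$-vector space by Noetherianness, and this together with $p$-completeness of $B\cG$ forces $\pi_1(B\cG)$ to be a finite $p$-group.
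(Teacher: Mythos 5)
Your overall strategy (verify the three conditions of \Cref{def:emtype}, quoting the literature for the two substantive ones) is the same as the paper's, which simply cites \cite[Proposition 4.4]{blo_pcompact} for $p$-completeness and finiteness of $\pi_1(B\cG)$, and \cite[Corollary 4.26]{bchv} for Noetherianness (hence finite type) of $H^*(B\cG;\F_p)$. Your connectedness argument is fine. Two remarks on the finite-type step: the Noetherianness of $H^*(B\cG;\F_p)$ for $p$-local \emph{compact} groups is not a theorem of Broto--Levi--Oliver but of \cite{bchv} (for $p$-local finite groups it is BLO), and your sketched reason does not prove it in any case --- a subring of stable elements inside a Noetherian ring need not be Noetherian; one needs a transfer/characteristic idempotent argument or the machinery of \cite{bchv}. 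Since you are invoking the result rather than proving it, this is a citation issue, but the sketch should not be presented as the proof.

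The genuine gap is in the $\pi_1$ step. First, the claim that $\theta_S$ induces a surjection $S \twoheadrightarrow \pi_1(|\cL|)$ of the \emph{uncompleted} nerve is false in general: $\operatorname{Aut}_{\cL}(P)$ contains lifts of prime-to-$p$ fusion automorphisms that do not come from $S$. For instance, for the fusion system of $G=\Z/p\rtimes\Z/q$ (with $q\mid p-1$) at $p$ one has $|\cL|\simeq BG$, whose fundamental group is not a quotient of $S=\Z/p$; the correct statement concerns $\pi_1$ of the $p$-completed nerve, and this is precisely the content of \cite[Proposition 4.4]{blo_pcompact}. Second, even granting a surjection $S\twoheadrightarrow\pi_1(B\cG)$, for $p$-local compact groups $S$ is an infinite discrete $p$-toral group, so finiteness does not follow; one must additionally show that the divisible identity component $(\Z/p^{\infty})^r$ dies in $\pi_1$ of the completion, and the assertion that $p$-completing the space merely $p$-completes this quotient group is unjustified for non-nilpotent spaces. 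Third, your cohomological fallback is simply wrong: $K(\Z_p,1)\simeq (S^1)^{\wedge}_p$ is connected, $p$-complete, and has $H^*(-;\F_p)$ an exterior algebra on one class in degree $1$ (so $H^1$ is finite-dimensional), yet $\pi_1\cong\Z_p$ is not a finite $p$-group. So $p$-completeness plus finite-dimensional $H^1$ cannot force $\pi_1(B\cG)$ to be a finite $p$-group; the finiteness is a structural theorem about linking systems and must be quoted (or reproved) as such, which is exactly what the paper's citation does.
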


\begin{proof}
	By \cite[Proposition 4.4]{blo_pcompact} the classifying space $B\cal{G}$ is $p$-complete, and $\pi_1B\cal{G}$ is a finite $p$-group. The cohomology ring is Noetherian \cite[Corollary 4.26]{bchv}, and hence of finite type. 
\end{proof}

The notion of monomorphism in this context involves a more general condition on the homotopy fiber of a map between classifying spaces than the one for $p$-compact groups introduced by Dwyer and Wilkerson, but it just specializes to that one when the spaces involved are $p$-compact groups.

\begin{defn}
	A morphism $f \colon X \to Y$ of connected spaces is called a homotopy monomorphism if the homotopy fiber $F$ is $B\Z/p$-local for every choice of basepoint in $F$, i.e., $\Map_*(B\Z/p,F)$ is contractible for all choices of basepoint in $F$. 
\end{defn}

\begin{rem}
When we are dealing with $p$-compact groups, \cite[Proposition 2.2]{cancas_finloop} shows that $f\colon BX\rightarrow BY$ is a monomorphism if and only if it is a homotopy monomorphism.
\end{rem}

\begin{prop}\label{prop:homotopymono}
	Let $\cal{G} = (S,\cal{F})$ and $\cal{H} = (P,\cal{E})$ be $p$-local compact groups. If $\phi \colon B\cal{G} \to B\cal{H}$ is a homotopy monomorphism, then $H^*(B\cal{G})$ is a finitely generated $H^*(B\cal{H})$-module via $B\phi^*$. 
\end{prop}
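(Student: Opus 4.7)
The plan is to reduce the statement to \Cref{lem:spacesfinit}. Let $F$ denote the homotopy fiber of $\phi \colon B\cal{G} \to B\cal{H}$. By the preceding lemma, both $B\cal{G}$ and $B\cal{H}$ are of EM-type; in particular, they are $p$-complete spaces with fundamental groups that are finite $p$-groups. Thus \Cref{lem:spacesfinit} will apply once we verify that $H^*(F;\F_p)$ is finite-dimensional, yielding that the induced morphism $\phi^* \colon C^*(B\cal{H}) \to C^*(B\cal{G})$ of ring spectra is finite. Passing to homotopy groups and using $\pi_{-*}C^*(X;\F_p) \cong H^*(X;\F_p)$, this is exactly the statement that $H^*(B\cal{G})$ is finitely generated over $H^*(B\cal{H})$.

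The core of the proof is therefore the finite-dimensionality of $H^*(F;\F_p)$. For this, I would exploit the Eilenberg--Moore spectral sequence
\[
E_2^{s,t} = \Tor_{H^*(B\cal{H})}^{s,t}(H^*(B\cal{G}),\F_p) \Longrightarrow H^{s+t}(F;\F_p),
\]
which converges strongly since $B\cal{H}$ is of EM-type. The hypothesis that $\phi$ is a homotopy monomorphism gives that $F$ is $B\Z/p$-local at every basepoint. Via Lannes' theory, this locality condition forces all positive-degree elements of $H^*(F;\F_p)$ to be nilpotent. Combined with the Noetherianity of both $H^*(B\cal{G})$ and $H^*(B\cal{H})$ (from \cite[Corollary 4.26]{bchv}), which controls the size of the Tor groups, one concludes that $H^*(F;\F_p)$ is finite-dimensional.

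The main obstacle lies in the finite-dimensionality step: the $B\Z/p$-local hypothesis alone is not enough, since arbitrary $B\Z/p$-local spaces need not have finite cohomology. The argument must leverage Lannes' $T$-functor together with the nilpotency and finite-type constraints inherited from the Noetherianity of the total space $B\cal{G}$ and the base $B\cal{H}$. Once established, this step is precisely the $p$-local compact group analogue of the Dwyer--Wilkerson finiteness theorem \cite{dwyerwilkerson_transfer} for monomorphisms of $p$-compact groups, and the rest of the proof is formal via \Cref{lem:spacesfinit}.
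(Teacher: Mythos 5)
There is a genuine gap exactly at the step you single out as the core: the finite-dimensionality of $H^*(F;\F_p)$. The mechanism you propose does not establish it and is essentially circular. In the Eilenberg--Moore spectral sequence the $E_2$-term is $\Tor_{H^*(B\cal{H})}(H^*(B\cal{G}),\F_p)$, and Noetherianity of the two rings does not control it: already $\Tor^0 \cong H^*(B\cal{G})\otimes_{H^*(B\cal{H})}\F_p$ is finite-dimensional (given finite type and graded Nakayama) essentially if and only if $H^*(B\cal{G})$ is a finitely generated $H^*(B\cal{H})$-module --- the very statement to be proved. Nor does $B\Z/p$-locality of $F$ plus nilpotency of positive-degree classes force finite-dimensionality: $\Omega S^3$ is $B\Z/p$-local by Miller's theorem and every positive-degree class in $H^*(\Omega S^3;\F_p)$ is nilpotent, yet the cohomology is infinite-dimensional. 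What your sketch requires is precisely a $p$-local compact analogue of the Dwyer--Wilkerson finiteness theorem for homotopy fibers, which you acknowledge but do not supply; note that the paper never proves finiteness of the fiber of $\phi$ in this generality --- it is obtained only later (\Cref{lem:finitesmall}, used in \Cref{prop:relgorensteinspaces}) when $B\cal{H}$ is the classifying space of a $p$-compact group, and that deduction takes the present proposition as an input, so your plan inverts the actual logical order. A smaller point: ``finite'' for $\phi^*$ means $C^*(B\cal{G})$ is a compact $C^*(B\cal{H})$-module, and passing from this to finite generation of $H^*(B\cal{G})$ still uses Noetherianity of $H^*(B\cal{H})$; it is not literally ``exactly the statement''.

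The paper's proof avoids the fiber altogether and works at the level of Sylow subgroups. By \cite[Theorem 6.3]{blo_pcompact} the composite $\phi\circ\theta_S\colon BS\to B\cal{H}$ is induced by a group homomorphism $\rho\colon S\to P$ of discrete $p$-toral groups; the homotopy monomorphism hypothesis, applied to a $B\Z/p$ mapping into the fiber of $\theta_P\circ B\rho$, forces $\rho$ to be injective. Then $B\rho^*$ is finite by comparing with the associated $p$-compact toral groups via \cite{MollerNotbohm1994Centers} and \cite[Proposition 9.11]{dwyerwilkerson_finloop}, and since $\theta_P^*$ and $\theta_S^*$ are finite injections by \cite{bchv}, $H^*(B\cal{G})$ sits as a submodule of the finitely generated $H^*(B\cal{H})$-module $H^*(BS)$ and is therefore finitely generated by Noetherianity. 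To repair your argument you would either have to prove the fiber-finiteness theorem you allude to, or adopt this Sylow-level reduction.
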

\begin{proof}
We recall that we have canonical maps $\theta_S \colon BS \to B\cal{G}$ and $\theta_{P} \colon BP \to B\cal{H}$.  Consider the composite $\phi\circ\theta_S\in [BS,B\cal{H}]$.  By \cite[Theorem 6.3 ]{blo_pcompact}, there is a bijection $[BS,B\cal{H}]\cong \Hom(S,P)/\sim$ where $\rho \sim \rho'$ if there is $f\in \Hom_{\cal{E}}(\rho(S),\rho'(S))$ such that $\rho'=f\circ \rho$. Then there exists a homomorphism $\rho \colon S \to P$, unique up to $\cal{H}$-conjugacy, making the following diagram commute
	\[
	\begin{tikzcd}
BS \arrow[r, "\theta_S"] \arrow[d, "B\rho"'] & B\cal{G} \arrow[d, "\phi"] \\
BP \arrow[r, "\theta_P"'] & B\cal{H}.
\end{tikzcd}
\]

We claim now that if $f$ is a homotopy monomorphism, then $\rho$ is a monomorphism. We first observe that $\theta_S$ and $\theta_P$ are homotopy monomorphisms, see the proof of \cite[Theorem 2.5]{cancas_finloop}. We also have that the composite of homotopy monomorphisms is a homotopy monomorphism, and so the diagram above implies that the composite $\theta_P \circ B\rho$ is a homotopy monomorphism. Now suppose that $\rho$ was not a monomorphism. Let $F$ denote the fiber of $\theta_P \circ B\rho$. Let $\sigma \in \Hom(\Z/p,S)$ be an injection into $\ker(\rho)$. This implies that $B \sigma \colon B\Z/p \to BS$ is null-homotopic in $B\cal{H}$, and hence $B\sigma$ lifts to a map $B\Z/p \to F$ which is not null-homotopic. This is a contradiction to $\theta_P \circ B{\rho}$ being a homotopy monomorphism, and so $\rho$ is a monomorphism as claimed. 

Taking cohomology, we obtain a commutative diagram 
	\[
	\begin{tikzcd}
H^*(B\cal{H}) \ar[r, "\theta_P^*"] \ar[d,"\phi^*",swap] & H^*(BP) \ar[d,"B\rho^*"] \\
H^*(B\cal{G}) \ar[r,"\theta_S^*",swap] & H^*(BS)
\end{tikzcd}
\]
By \cite[Cororally 4.20 and Proposition 5.5]{bchv} the cohomology rings in this diagram are Noetherian, and the horizontal morphisms are injections which exhibit the target as a finitely generated module over the source. We claim that in order to show that $H^*(B\cal{G})$ is a finitely generated $H^*(B\cal{H})$-module, it suffices to show that $H^*(BS)$ is finitely generated as $H^*(BP)$-module via $B\rho^*$. Indeed, suppose that $B\rho^*$ is finite, then $H^*(BS)$ is a finitely generated $H^*(B\cal{H})$-module. Because $H^*(B\cal{H})$ is Noetherian, it follows that $H^*(B\cal{G})$ is a finitely generated $H^*(B\cal{H})$-module, so $\phi^*$ is finite. 

To see that $B\rho^*$ is finite, consider the $p$-compact toral group $\widetilde S = \Omega ((BS)^{\wedge}_p)$ (a $p$-compact group which is an extension of a $p$-compact torus and a finite $p$-group) and the natural $\F_p$-equivalence $f \colon BS \to B\widetilde S$. By \cite[Proposition 3.4]{MollerNotbohm1994Centers}, the morphism $\widetilde \rho \colon \widetilde S \to \widetilde P$ is a monomorphism of $p$-compact groups since $\rho \colon S \to P$ is an algebraic monomorphism. This implies that $H^*(B\widetilde S)$ is a finitely generated $H^*(B\widetilde P)$-module via $B\widetilde\rho^*$ by \cite[Proposition 9.11]{dwyerwilkerson_finloop}. Therefore, $H^*(BS)$ is a finitely generated $H^*(BP)$-module via $B\rho^*$.
\end{proof}
We would like to identify conditions that ensure that, given a homotopy monomorphism $\phi \colon B\cal{G} \to B\cal{H}$ as above, the induced morphism $\phi^* \colon C^*(B\cal{H}) \to C^*(B\cal{G})$ is finite.  The next results follows from combining \cite[Lemma 3.4 and Lemma 3.5]{shamir_pcochains}.

\begin{lem}\label{lem:finitesmall}
	Let $f \colon Y \to X$ be a map between $p$-complete spaces with fundamental groups finite $p$-groups and let $F$ denote the fiber of $f$. If $H_*(\Omega X$) is finite dimensional and $H^*(Y)$ is finitely generated over $H^*(X)$ via the induced map, then $f^* \colon C^*(Y) \to C^*(X)$ is finite and $H^*(F)$ is finite-dimensional. 
\end{lem}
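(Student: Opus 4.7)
The plan is to split the conclusion into two parts that feed into each other: first establish that $f^*\colon C^*(X)\to C^*(Y)$ is finite (i.e., $C^*(Y)$ is compact in $\Mod_{C^*(X)}$), then deduce that $H^*(F)$ is finite-dimensional by base change. The Eilenberg--Moore identification $C^*(F)\simeq C^*(Y)\otimes_{C^*(X)}k$ from \Cref{lem:emss} is available because the hypotheses on $X$ (it is $p$-complete with $\pi_1X$ a finite $p$-group and $H_*(\Omega X)$ finite-dimensional, so $H^*(X)$ itself is of finite type in each degree) put $X$ into EM-type.

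For the finiteness of $f^*$, the key input is that the condition on $H_*(\Omega X)$ makes $C^*(X)\to k$ small in the sense of \Cref{def:proxysmall}; that is, $k$ lies in the thick subcategory of $\Mod_{C^*(X)}$ generated by $C^*(X)$, cf.\ \cite[Section 5.5]{dgi_duality}. I would then show that $C^*(Y)$ lies in the thick subcategory generated by $k$ (and hence, by smallness, in the thick subcategory generated by $C^*(X)$). Concretely, since $\pi_*C^*(Y)=H^{-*}(Y)$ is finitely generated over $\pi_*C^*(X)=H^{-*}(X)$, and both are coconnective with $\pi_0=k$, one chooses finite generators $y_1,\dots,y_n$ and builds a morphism $\bigoplus_i\Sigma^{d_i}C^*(X)\to C^*(Y)$ that is surjective on $\pi_*$ modulo the augmentation ideal. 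Iterating the cofibre construction and using the Nakayama-style control coming from coconnectivity together with the finite generation, one obtains a finite tower realizing $C^*(Y)$ from cells built out of $C^*(X)$ and $k$. Smallness of $k$ then collapses this to a finite cell structure in $\Thick(C^*(X))$.

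With $C^*(Y)$ now compact over $C^*(X)$, the base change $C^*(F)\simeq C^*(Y)\otimes_{C^*(X)}k$ is compact as a $k$-module, since $-\otimes_{C^*(X)}k$ preserves compact objects. Compactness over the field spectrum $k$ is equivalent to having total homotopy of finite dimension over $k$, giving $H^*(F)$ finite-dimensional as required.

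The main obstacle is the bookkeeping in the second paragraph: ensuring that the iterative cell attachment for $C^*(Y)$ terminates after finitely many steps. This is exactly where one must leverage both hypotheses simultaneously---the finite generation of $H^*(Y)$ over $H^*(X)$ caps the number of generators at each stage, while the finite-dimensionality of $H_*(\Omega X)$ (equivalently, smallness of $C^*(X)\to k$) caps the complexity of each cell. The execution of this step is precisely the content of \cite[Lemma 3.4 and Lemma 3.5]{shamir_pcochains}, and invoking those two lemmas in tandem realizes the strategy above.
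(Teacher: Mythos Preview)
Your proposal is correct and aligns with the paper's proof, which consists solely of the sentence ``The next result follows from combining \cite[Lemma 3.4 and Lemma 3.5]{shamir_pcochains}.'' You arrive at the same citation, and your surrounding sketch---smallness of $C^*(X)\to k$ from finite-dimensionality of $H_*(\Omega X)$, a finite cell construction for $C^*(Y)$ over $C^*(X)$ from the module-finiteness hypothesis, then base change along $-\otimes_{C^*(X)}k$ to conclude $C^*(F)$ is perfect over $k$---accurately outlines the content of those two lemmas.
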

\begin{prop}\label{prop:relgorensteinspaces}
		Let $\cal{G} = (S,\cal{F})$ and $\cal{H} = (P,\cal{E})$ be $p$-local compact groups and $\phi \colon B\cal{G} \to B\cal{H}$ a homotopy monomorphism with homotopy fiber $F$, where $B\cal{H}$ is the classifying space of a $p$-compact group of dimension $h$.  Then:
		\begin{enumerate}
		\item $C^*(B\cal{G})$ is Gorenstein if and only if $H^*(F)$ is a Poincar\'e duality algebra.
		\item If $C^*(B\cal{G})$ is Gorenstein of shift $g$, then $\phi^*\colon C^*(B\cal{H}) \to C^*(B\cal{G})$ is relatively Gorenstein of shift $h-g$, and $C^*(B\cal{G})$ satisfies Gorenstein local duality of shift $g$. 
		\end{enumerate}
\end{prop}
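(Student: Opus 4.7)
The plan is to reduce both parts to the ascent machinery of \Cref{thm:ascent_spaces} and \Cref{thm:relgorensteinspaces} applied to $\phi \colon B\cal{G} \to B\cal{H}$. The classifying spaces of $p$-local compact groups are of EM-type, and $C^*(B\cal{H})$ is Gorenstein and satisfies local Gorenstein duality, both of shift $h$, by \Cref{thm:gorenstein_pcompact} and \Cref{thm:localgorensteindualitypcompact} applied to the $p$-compact group underlying $B\cal{H}$. The only nontrivial hypothesis needed to unlock those two ascent theorems is that $H^*(F)$ is finite-dimensional, and verifying it is the place where the homotopy monomorphism assumption interacts with the $p$-compact structure on $B\cal{H}$; this is the step I expect to be the main obstacle.

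To produce that finiteness, I would apply \Cref{lem:finitesmall} to $\phi$. Its three hypotheses are satisfied: both $B\cal{G}$ and $B\cal{H}$ are $p$-complete with finite $p$-group fundamental groups by the lemma preceding \Cref{prop:homotopymono}, the loop space $\Omega B\cal{H}$ is $\F_p$-finite by the very definition of a $p$-compact group, and \Cref{prop:homotopymono} furnishes the finite generation of $H^*(B\cal{G})$ over $H^*(B\cal{H})$ via $\phi^*$ needed as input. The output is that $\phi^*$ is a finite morphism of cochain algebras and $H^*(F)$ is finite-dimensional; consequently $C^*(F) \to \F_p$ is cosmall and hence proxy-regular by \cite[Section 5.5]{dgi_duality} and \cite[Remark 4.15]{dgi_duality}.

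For part (1), I would then invoke \Cref{thm:ascent_spaces}. If $H^*(F)$ is a Poincar\'e duality algebra, then since we work over a field and $C^*(F)$ is coconnective with $\pi_0 \cong \F_p$, the algebraic duality lifts to a Poincar\'e duality equivalence of $C^*(F)$ as a ring spectrum, whence $C^*(F) \to \F_p$ is orientable Gorenstein by \Cref{cor:Gorenteincosmall}, and then \Cref{thm:ascent_spaces}(1) upgrades Gorensteinness from $C^*(F)$ and $C^*(B\cal{H})$ to $C^*(B\cal{G})$. Conversely, Gorensteinness of $C^*(B\cal{G})$ descends to $C^*(F)$ via \Cref{thm:ascent_spaces}(2), and the reverse direction of \Cref{cor:Gorenteincosmall}, whose hypotheses (cosmall, coconnective, proxy-regular, $\pi_0 \cong \F_p$) are all met, promotes this to Poincar\'e duality on the ring spectrum, hence algebraically for $H^*(F)$ by passage to homotopy.

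For part (2), the Gorensteinness hypotheses of \Cref{thm:relgorensteinspaces} now hold: $C^*(B\cal{H})$ is Gorenstein of shift $h$, $C^*(B\cal{G})$ is Gorenstein of shift $g$ by assumption, and $H^*(F)$ is finite-dimensional by the first step. Part (1) of \Cref{thm:relgorensteinspaces} yields that $\phi^*$ is relatively Gorenstein of shift $h - g$, and part (2) of the same theorem, combined with local Gorenstein duality of shift $h$ for $C^*(B\cal{H})$ from \Cref{thm:localgorensteindualitypcompact}, transfers this to local Gorenstein duality of shift $g$ for $C^*(B\cal{G})$, completing the plan.
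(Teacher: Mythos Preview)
Your proposal is correct and follows essentially the same approach as the paper: both arguments feed \Cref{prop:homotopymono} into \Cref{lem:finitesmall} to obtain finiteness of $H^*(F)$ and of $\phi^*$, then invoke \Cref{thm:ascent_spaces} together with \Cref{cor:Gorenteincosmall} for part (1), and \Cref{thm:relgorensteinspaces} together with \Cref{thm:localgorensteindualitypcompact} for part (2). Your write-up is in fact more careful than the paper's in spelling out why each hypothesis of the cited lemmas holds.
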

\begin{proof} We have already seen that classifying spaces of $p$-local compact groups are always of EM-type, and they are $p$-complete by definition. 

The first point follows directly from \Cref{thm:ascent_spaces}, by using that $H^*(F)$ is finite (\Cref{lem:finitesmall}) and  \Cref{cor:Gorenteincosmall}.
	For the second point, as usual, we wish to apply \Cref{thm:relgorensteinspaces}. 
	By \Cref{prop:homotopymono,lem:finitesmall} we have that $f^* \colon C^*(B\cal{H}) \to C^*(B\cal{G})$ is finite and $H^*(F)$ is finite-dimensional, where $F$ is the fiber of $\phi$.  By \Cref{thm:localgorensteindualitypcompact}, $C^*(B\cal{H})$ satisfies local Gorenstein duality of shift $h$. Thus, if $C^*(B\cal{G})$ satisfies Gorenstein duality of shift $g$, then \Cref{thm:relgorensteinspaces} applies to give the result. 
\end{proof}

\begin{defn}
	Let $X$ be a connected $p$-complete space, then a unitary embedding of $X$ is a homotopy monomorphism $X \to BU(n)^{\wedge}_p$ for some $n>0$. A $p$-local compact group $\cal{G}$ is said to admit a unitary embedding if its classifying space $B\cal{G}$ does. 
\end{defn}
\begin{rem}
	Since classifying spaces of $p$-compact groups always admit unitary embeddings by \Cref{thm:unitaryembeddingpcompact}, any homotopy monomorphism as in \Cref{prop:relgorensteinspaces} gives rise to a unitary embedding of $B\cal{G}$.
\end{rem}
\begin{cor}\label{cor:orientableunitaryembedding}
	Let $\cal{G}$ be a $p$-local compact group with a unitary embedding $\phi \colon B\cal{G} \to BU(n)^{\wedge}_p$. The homotopy fiber $U(n)/\cal{G}$ is a Poincar\'{e} duality space of dimension $q$ if and only if $C^*(B\cal{G})$ is Gorenstein of shift $n^2+q$ and satisfies local Gorenstein duality of the same shift. 
\end{cor}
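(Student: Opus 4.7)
The corollary is essentially a direct consequence of \Cref{prop:relgorensteinspaces} applied with $\cal{H}$ the $p$-compact group $U(n)^{\wedge}_p$, combined with the Gorenstein ascent theorem \Cref{thm:ascent_spaces} to track the shift precisely. Note first that $\dim_p(U(n)^{\wedge}_p)=n^2$, which is visible from the exterior algebra structure of $H^*(U(n);\F_p)$ on odd-degree generators summing to $n^2$, so \Cref{thm:gorenstein_pcompact} yields that $C^*(BU(n)^{\wedge}_p)$ is Gorenstein of shift $n^2$. Consider then the fibration $F = U(n)/\cal{G} \to B\cal{G} \xrightarrow{\phi} BU(n)^{\wedge}_p$. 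The conditions of \Cref{thm:ascent_spaces} are met: $BU(n)^{\wedge}_p$ is of EM-type, $C^*(BU(n)^{\wedge}_p)$ is proxy-regular, and $H^*(F)$ is finite-dimensional by \Cref{prop:homotopymono} together with \Cref{lem:finitesmall}.

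Applying both directions of \Cref{thm:ascent_spaces} then gives the equivalence that $C^*(B\cal{G})$ is Gorenstein of shift $n^2 + q$ if and only if $C^*(F)$ is Gorenstein of shift $q$. To translate the latter into the Poincaré duality of $F$, I would use the long exact sequence of the fibration: since $BU(n)^{\wedge}_p$ is simply connected and $B\cal{G}$ is connected, $F$ is connected; and since $\pi_1(F)$ sits in an exact sequence $\pi_2(BU(n)^{\wedge}_p)=\Z_p \to \pi_1(F) \to \pi_1(B\cal{G}) \to 0$ with $\pi_1(B\cal{G})$ a finite $p$-group, it is a pro-$p$ group. Combined with the finite-dimensionality of $H^*(F)$, this makes $C^*(F)$ a cosmall, coconnective augmented $\F_p$-algebra with $\pi_0C^*(F)=\F_p$, and any Gorenstein structure on it is automatically orientable by \Cref{lem:orientable}(3). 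Then \Cref{cor:Gorenteincosmall} gives that $C^*(F)$ is Gorenstein of shift $q$ if and only if it satisfies Poincaré duality of dimension $q$, equivalently $F$ is a Poincaré duality space of dimension $q$.

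Chaining the two equivalences produces the asserted iff between Poincaré duality of $U(n)/\cal{G}$ of dimension $q$ and Gorensteinness of $C^*(B\cal{G})$ of shift $n^2+q$. The local Gorenstein duality claim requires no extra argument: once $C^*(B\cal{G})$ is Gorenstein of shift $n^2+q$, \Cref{prop:relgorensteinspaces}(2) automatically upgrades this to local Gorenstein duality of the same shift. The main technical obstacle is verifying all the hypotheses of \Cref{cor:Gorenteincosmall} and \Cref{thm:ascent_spaces} for the fiber $F$—namely connectedness, the pro-$p$ structure of $\pi_1(F)$, and finite-dimensionality of $H^*(F)$—but each reduces to a short long-exact-sequence or EM-type argument given the previously established results, so no genuinely new input beyond \Cref{prop:relgorensteinspaces} is needed.
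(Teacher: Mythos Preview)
Your proposal is correct and follows essentially the same approach as the paper. The paper's proof is extremely terse (a single sentence invoking \Cref{thm:ascent_spaces} for the forward direction), implicitly leaving the converse and the local Gorenstein duality upgrade to \Cref{prop:relgorensteinspaces}; you have simply unpacked these same ingredients explicitly, including the verification via \Cref{cor:Gorenteincosmall} and \Cref{lem:orientable}(3) that Gorenstein and Poincar\'e duality coincide for $C^*(F)$.
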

\begin{proof}
	The condition on the homotopy fiber ensures that $C^*(B\cal{G})$ is Gorenstein of shift $n^2+q$ by \Cref{thm:ascent_spaces}. 
\end{proof}

\begin{rem} 
Suppose that there exists a unitary embedding $\phi \colon B\cal{G} \to BU(n)^{\wedge}_p$ with Poincar\'e duality fiber. Then, any homotopy monomorphism $f \colon B\cal{G} \to B\cal{H}$ into the classifying space of a $p$-compact group will have the property that the homotopy fiber is a Poincar\'e duality space. Indeed, $C^*(B\cal{G})$ will be Gorenstein by \Cref{cor:orientableunitaryembedding}, and hence by \Cref{prop:relgorensteinspaces}(1) the homotopy fiber of $f$ will be a mod $p$ Poincar\'e duality space. For example, this holds for any other unitary embedding of $\cal{G}$.  In this situation, the Gorenstein shift can be interpreted as a notion of dimension for the $p$-local compact group.
\end{rem}

We do not know in general that $p$-local compact groups admit unitary embeddings, and even when they do we do not know that the homotopy fiber is a Poincar\'{e} duality space. However, in the case of a $p$-local finite group (i.e., when $S$ is a finite $p$-group), we do in fact know this is the case. 

\begin{cor}\label{cor:plocalfinitegorenstein}
	Any $p$-local finite group $\cal{G}$ satisfies local Gorenstein duality of shift $0$. 
\end{cor}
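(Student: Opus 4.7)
The plan is to invoke \Cref{cor:orientableunitaryembedding}, which reduces the problem to exhibiting a unitary embedding $\phi \colon B\cG \to BU(n)^{\wedge}_p$ whose homotopy fiber $U(n)/\cG$ is a mod $p$ Poincar\'e duality space of the correct dimension. A $p$-local finite group ought to behave like a ``zero-dimensional'' group-like object, so the expected Gorenstein shift of $C^*(B\cG)$ is $0$. Unwinding the formula ``shift $= n^2 + q$'' in \Cref{cor:orientableunitaryembedding}, this forces the homotopy fiber to be a Poincar\'e duality space with $C^*(U(n)/\cG)$ of shift $q = -n^2$, i.e., geometrically $n^2$-dimensional, exactly matching the classical situation where $G \le U(n)$ is an honest finite subgroup and $U(n)/G$ is a smooth closed manifold of dimension $\dim U(n) = n^2$.

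Both ingredients are supplied by the work of Cantarero, Castellana and Morales \cite{ccm_vbplocalfinite}. Using a complete family of complex characters of $\cG$, they construct a faithful virtual unitary representation that realizes a homotopy monomorphism $\phi \colon B\cG \to BU(n)^{\wedge}_p$; this provides the input required by \Cref{prop:relgorensteinspaces,cor:orientableunitaryembedding}. Exploiting their theory of Euler classes and transfer maps for vector bundles over $p$-local finite groups, they further verify that the associated homotopy fiber $U(n)/\cG$ carries a mod $p$ fundamental class in the top degree $n^2$, establishing Poincar\'e duality of the expected dimension.

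With these facts in hand, the proof reduces to a direct appeal to \Cref{cor:orientableunitaryembedding}: the fiber $U(n)/\cG$ is a Poincar\'e duality space of dimension $q = -n^2$, so $C^*(B\cG;\F_p) \to \F_p$ is Gorenstein of shift $n^2 + (-n^2) = 0$ and satisfies local Gorenstein duality of shift $0$. The non-formal content of the argument is therefore entirely concentrated in \cite{ccm_vbplocalfinite}; the main obstacle is not in the ring spectrum framework of the present paper but in verifying that $p$-local finite groups genuinely support a theory of faithful unitary embeddings and Poincar\'e dual quotient spaces---a substantial geometric input about fusion systems that must be imported from outside.
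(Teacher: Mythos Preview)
Your approach is essentially the same as the paper's: both import the unitary embedding and the Gorenstein input from \cite{ccm_vbplocalfinite}, then feed these into the ascent machinery of \Cref{prop:relgorensteinspaces}/\Cref{cor:orientableunitaryembedding}. The only difference is packaging. The paper cites \cite[Theorem 6.7]{ccm_vbplocalfinite} for the statement that $C^*(B\cG)$ is Gorenstein of shift $0$ and then invokes \Cref{prop:relgorensteinspaces}(2) directly; you instead phrase the external input as Poincar\'e duality of the fiber $U(n)/\cG$ and invoke \Cref{cor:orientableunitaryembedding}. Since \Cref{prop:relgorensteinspaces}(1) shows these two inputs are equivalent, the arguments coincide. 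One small caution: your description of how \cite{ccm_vbplocalfinite} establishes this (a fundamental class via Euler classes and transfers) is more specific than what the present paper actually attributes to them, so you should double-check that this narrative matches their Theorem~6.7 rather than being a plausible reconstruction; but the logical structure of your proof is correct regardless.
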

\begin{proof}
	By \cite[Theorem 6.2]{ccm_vbplocalfinite} there is a homotopy monomorphism $B\cal{G} \to BU(n)^{\wedge}_p$ for some $n>0$ and, moreover, $C^*(B\cal{G})$ is Gorenstein of shift 0, see \cite[Theorem 6.7]{ccm_vbplocalfinite}. Thus \Cref{prop:relgorensteinspaces} applies to show that $C^*(B\cal{G})$ satisfies local Gorenstein duality of shift 0. 
\end{proof}

\bibliography{duality}\bibliographystyle{alpha}
\end{document}